\definecolor{darkgreen}{rgb}{0,0.45,0}
\DeclareMathAlphabet{\mathbf}{OT1}{cmr}{b}{n}
\def\@cite#1#2{[{#1\if@tempswa ,~#2\fi}]}
\def\matrixobject@{%
  \edef \next@{={\DirectionfromtheDirection@ }}%
  \expandafter \toks@ \next@ \plainxy@
  \let\xy@@ix@=\xyq@@toksix@
  \xyFN@ \OBJECT@}
\let\xy@entry@@norm=\entry@@norm
\def\entry@@norm@patched{%
  \let\object@=\matrixobject@
  \xy@entry@@norm }
\newcommand{\twocong}[2][0.5]{\ar@{}[#2] \save ?(#1)*{\cong}\restore}
\newcommand{\twoeq}[2][0.5]{\ar@{}[#2] \save ?(#1)*{=}\restore}
\newcommand{\rtwocell}[3][0.5]{\ar@{}[#2] \ar@{=>}?(#1)+/l 0.2cm/;?(#1)+/r 0.2cm/^{#3}}
\newcommand{\ltwocell}[3][0.5]{\ar@{}[#2] \ar@{=>}?(#1)+/r 0.2cm/;?(#1)+/l 0.2cm/^{#3}}
\newcommand{\ltwocello}[3][0.5]{\ar@{}[#2] \ar@{=>}?(#1)+/r 0.2cm/;?(#1)+/l 0.2cm/_{#3}}
\newcommand{\dtwocell}[3][0.5]{\ar@{}[#2] \ar@{=>}?(#1)+/u  0.2cm/;?(#1)+/d 0.2cm/^{#3}}
\newcommand{\dltwocell}[3][0.5]{\ar@{}[#2] \ar@{=>}?(#1)+/ur  0.2cm/;?(#1)+/dl 0.2cm/^{#3}}
\newcommand{\drtwocell}[3][0.5]{\ar@{}[#2] \ar@{=>}?(#1)+/ul  0.2cm/;?(#1)+/dr 0.2cm/^{#3}}
\newcommand{\dthreecell}[3][0.5]{\ar@{}[#2] \ar@3{->}?(#1)+/u  0.2cm/;?(#1)+/d 0.2cm/^{#3}}
\newcommand{\utwocell}[3][0.5]{\ar@{}[#2] \ar@{=>}?(#1)+/d 0.2cm/;?(#1)+/u 0.2cm/_{#3}}
\newcommand{\dtwocelltarg}[3][0.5]{\ar@{}#2 \ar@{=>}?(#1)+/u  0.2cm/;?(#1)+/d 0.2cm/^{#3}}
\newcommand{\utwocelltarg}[3][0.5]{\ar@{}#2 \ar@{=>}?(#1)+/d  0.2cm/;?(#1)+/u 0.2cm/_{#3}}
\newcommand{\pushoutcorner}[1][dr]{\save*!/#1+1.2pc/#1:(1,-1)@^{|-}\restore}
\newcommand{\pullbackcorner}[1][dr]{\save*!/#1-1.2pc/#1:(-1,1)@^{|-}\restore}
\DeclareMathOperator{\ob}{ob}
\newcommand{\cat}[1]{\mathbf{#1}}
\newcommand{\thg}{{\mathord{\text{--}}}}
\newcommand{\cd}[2][]{\vcenter{\hbox{\xymatrix#1{#2}}}}
\renewcommand{\phi}{\varphi}
\newcommand{\A}{{\mathcal A}}
\newcommand{\B}{{\mathcal B}}
\newcommand{\C}{{\mathcal C}}
\newcommand{\D}{{\mathcal D}}
\newcommand{\E}{{\mathcal E}}
\newcommand{\F}{{\mathcal F}}
\newcommand{\K}{{\mathcal K}}
\renewcommand{\L}{{\mathcal L}}
\renewcommand{\O}{{\mathcal O}}
\newcommand{\R}{{\mathcal R}}
\let\sec=\S
\renewcommand{\S}{{\mathcal S}}
\newcommand{\T}{{\mathcal T}}
\newcommand{\V}{{\mathcal V}}
\newcommand{\W}{{\mathcal W}}
\newcommand{\ho}{\mathrel{\bot_h}}
\newcommand{\xtor}[1]{\cdl[@1]{{} \ar[r]|-{\object@{|}}^{#1} & {}}}
\def\hookleftarrowfill@{\arrowfill@\leftarrow\relbar{\relbar\joinrel\rhook}}
\def\twoheadleftarrowfill@{\arrowfill@\twoheadleftarrow\relbar\relbar}
\def\leftbararrowfill@{\arrowdoublefill@{\leftarrow\mkern-5mu}\relbar\mapstochar\relbar\relbar}
\def\Leftbararrowfill@{\arrowdoublefill@{\Leftarrow\mkern-2mu}\Relbar\Mapstochar\Relbar\Relbar}
\def\leftringarrowfill@{\arrowdoublefill@{\leftarrow\mkern-3mu}\relbar{\mkern-3mu\circ\mkern-2mu}\relbar\relbar}
\def\lefttriarrowfill@{\arrowfill@{\mathrel\triangleleft\mkern0.5mu\joinrel\relbar}\relbar\relbar}
\def\Lefttriarrowfill@{\arrowfill@{\mathrel\triangleleft\mkern1mu\joinrel\Relbar}\Relbar\Relbar}
\def\hookrightarrowfill@{\arrowfill@{\lhook\joinrel\relbar}\relbar\rightarrow}
\def\twoheadrightarrowfill@{\arrowfill@\relbar\relbar\twoheadrightarrow}
\def\rightbararrowfill@{\arrowdoublefill@{\relbar\mkern-0.5mu}\relbar\mapstochar\relbar\rightarrow}
\def\Rightbararrowfill@{\arrowdoublefill@{\Relbar\mkern-2mu}\Relbar\Mapstochar\Relbar\Rightarrow}
\def\rightringarrowfill@{\arrowdoublefill@\relbar\relbar{\mkern-2mu\circ\mkern-3mu}\relbar{\mkern-3mu\rightarrow}}
\def\righttriarrowfill@{\arrowfill@\relbar\relbar{\relbar\joinrel\mkern0.5mu\mathrel\triangleright}}
\def\Righttriarrowfill@{\arrowfill@\Relbar\Relbar{\Relbar\joinrel\mkern1mu\mathrel\triangleright}}
\def\leftrightarrowfill@{\arrowfill@\leftarrow\relbar\rightarrow}
\def\mapstofill@{\arrowfill@{\mapstochar\relbar}\relbar\rightarrow}
\renewcommand*\xleftarrow[2][]{\ext@arrow 20{20}0\leftarrowfill@{#1}{#2}}
\providecommand*\xLeftarrow[2][]{\ext@arrow 60{22}0{\Leftarrowfill@}{#1}{#2}}
\providecommand*\xhookleftarrow[2][]{\ext@arrow 10{20}0\hookleftarrowfill@{#1}{#2}}
\providecommand*\xtwoheadleftarrow[2][]{\ext@arrow 60{20}0\twoheadleftarrowfill@{#1}{#2}}
\providecommand*\xleftbararrow[2][]{\ext@arrow 10{22}0\leftbararrowfill@{#1}{#2}}
\providecommand*\xLeftbararrow[2][]{\ext@arrow 50{24}0\Leftbararrowfill@{#1}{#2}}
\providecommand*\xleftringarrow[2][]{\ext@arrow 10{26}0\leftringarrowfill@{#1}{#2}}
\providecommand*\xlefttriarrow[2][]{\ext@arrow 80{24}0\lefttriarrowfill@{#1}{#2}}
\providecommand*\xLefttriarrow[2][]{\ext@arrow 80{24}0\Lefttriarrowfill@{#1}{#2}}
\renewcommand*\xrightarrow[2][]{\ext@arrow 01{20}0\rightarrowfill@{#1}{#2}}
\providecommand*\xRightarrow[2][]{\ext@arrow 04{22}0{\Rightarrowfill@}{#1}{#2}}
\providecommand*\xhookrightarrow[2][]{\ext@arrow 00{20}0\hookrightarrowfill@{#1}{#2}}
\providecommand*\xtwoheadrightarrow[2][]{\ext@arrow 03{20}0\twoheadrightarrowfill@{#1}{#2}}
\providecommand*\xrightbararrow[2][]{\ext@arrow 01{22}0\rightbararrowfill@{#1}{#2}}
\providecommand*\xRightbararrow[2][]{\ext@arrow 04{24}0\Rightbararrowfill@{#1}{#2}}
\providecommand*\xrightringarrow[2][]{\ext@arrow 01{26}0\rightringarrowfill@{#1}{#2}}
\providecommand*\xrighttriarrow[2][]{\ext@arrow 07{24}0\righttriarrowfill@{#1}{#2}}
\providecommand*\xRighttriarrow[2][]{\ext@arrow 07{24}0\Righttriarrowfill@{#1}{#2}}
\providecommand*\xmapsto[2][]{\ext@arrow 01{20}0\mapstofill@{#1}{#2}}
\providecommand*\xleftrightarrow[2][]{\ext@arrow 10{22}0\leftrightarrowfill@{#1}{#2}}
\providecommand*\xLeftrightarrow[2][]{\ext@arrow 10{27}0{\Leftrightarrowfill@}{#1}{#2}}
\numberwithin{equation}{section}
\theoremstyle{plain}
\newtheorem{Thm}{Theorem}
\newtheorem*{Thm*}{Theorem}
\newtheorem{Prop}[Thm]{Proposition}
\newtheorem{Lemma}[Thm]{Lemma}
\theoremstyle{definition}
\newtheorem{Defn}[Thm]{Definition}
\newtheorem{Ex}[Thm]{Example}
\newtheorem{Rk}[Thm]{Remark}
\begin{document}
\leftmargini=2em \title[Bousfield (co)localisation of one-dimensional
model structures]{Bousfield localisation and colocalisation of
  one-dimensional model structures}

\author{Scott Balchin}
\address{Department of Pure Mathematics, The Hicks Building,
  University of Sheffield, Sheffield S3 7RH, United Kingdom}
\email{scott.balchin@sheffield.ac.uk}

\author{Richard Garner} \address{Centre of Australian Category Theory,
  Macquarie University, NSW 2109, Australia}
\email{richard.garner@mq.edu.au}

\subjclass[2010]{Primary: 55U35, 18A40}
\date{\today}

\thanks{The work described here was carried out during a visit by the
  first author to Sydney supported by Macquarie University Research
  Centre funding; both authors express their gratitude for this
  support. The second author also acknowledges, with equal gratitude,
  the support of Australian Research Council grants DP160101519 and
  FT160100393. }

\begin{abstract}
  We give an account of Bousfield localisation and colocalisation for
  \emph{one-dimensional} model categories---ones enriched over the
  model category of $0$-types. A distinguishing feature of our
  treatment is that it builds localisations and colocalisations using
  only the constructions of \emph{projective} and \emph{injective}
  transfer of model structures along right and left adjoint functors,
  and without any reference to Smith's theorem.
\end{abstract}
\maketitle

\section{Introduction}
\label{sec:introduction}

\looseness=-1 A \emph{(Bousfield) localisation} of a model category
$\E$ is a model structure $\E_\ell$ on the same underlying category
with the same cofibrations, but a larger class of weak equivalences.
If $\E$ is left proper and combinatorial, one may construct a
localisation from any set $S$ of maps which one wishes to become weak
equivalences in $\E_\ell$; the fibrant objects of $\E_\ell$ will be
the \emph{$S$-local} fibrant objects of $\E$---those which see each
map in $S$ as a weak equivalence---and the weak equivalences of
$\E_\ell$, the \emph{$S$-local equivalences}---those which every
$S$-local fibrant object sees as a weak equivalence. The $S$-local
equivalences and the original cofibrations determine the other classes
of the $\E_\ell$-model structure; the hard part is exhibiting the
needed factorisations, which is usually done using a subtle
cardinality argument of
Smith~\cite[Theorem~1.7]{Beke2000Sheafifiable}.

This paper is the first step towards understanding localisations of
combinatorial\label{sec:introduction-1} model categories in a way
which avoids Smith's theorem, and instead uses only the constructions
of \emph{projective} and \emph{injective} liftings of model
structures---that is, transfers along right and left adjoint functors.
It is only a first step since, for reasons to be made clear soon, we
only implement our idea here for the rather special class of
\emph{one-dimensional} model categories: those which are enriched over
the cartesian model category of $0$-types. While homotopically
trivial, there are mathematically interesting examples of such model
structures, and in this context, our approach yields the following
complete characterisation:
\begin{restatable*}{Thm}{firstmaintheorem}
  \label{thm:3}
  If $\E$ is a left proper one-dimensional combinatorial model
  category, then the assignation $\E_\ell \mapsto (\E_\ell)_{cf}$
  yields an order-reversing bijection between combinatorial
  localisations of $\E$ (ordered by inclusion of acyclic
  cofibrations) and full, replete, reflective, locally presentable subcategories of
  $\E_{cf}$ (ordered by inclusion).
\end{restatable*}
Here, $(\thg)_{cf}$ is the operation assigning to a model category its
subcategory of cofibrant--fibrant objects. Since our approach relies
only on injective and projective liftings, it dualises
straightforwardly, giving the corresponding:
\begin{restatable*}{Thm}{secondmaintheorem}
  \label{thm:4}
  If $\E$ is a right proper one-dimensional combinatorial model
  category, then the assignation $\E_r \mapsto (\E_r)_{cf}$ yields an
  order-reversing bijection between combinatorial colocalisations of
  $\E$ (ordered by inclusion of acyclic fibrations) and full, replete,
  coreflective, locally presentable subcategories of $\E_{cf}$
  (ordered by inclusion).
\end{restatable*}

These results expand on the inquiry of~\cite{Salch2017The-Bousfield},
which characterises (co)localisations of \emph{discrete} model
categories: ones whose weak equivalences are the isomorphisms.
However, it is our general approach to constructing (co)localisations,
rather than the applications to the one-dimensional setting, which is
the main conceptual contribution of this paper, and it therefore seems
appropriate to now sketch this approach in the context of a general
combinatorial model category $\E$.

As model structures are determined by their cofibrations and their
fibrant objects, a localisation of $\E$ can be determined by
specifying its fibrant objects. So suppose given a class of fibrant
objects in $\E$, which we call \emph{local}, that we would like to
form the fibrant objects of a localisation; for example, given a set
$S$ of maps in $\E$, we could take ``local'' to mean ``$S$-local
fibrant''. We will construct the localisation at issue with reference
to an adjunction
\begin{equation}\label{eq:9}
  \cd{
    {\L} \ar@<-4.5pt>[r]_-{G} \ar@{}[r]|-{\bot} &
    {\E} \ar@<-4.5pt>[l]_-{F}
  }
\end{equation}
between $\E$ and a suitably-defined category of local objects $\L$.
Naively, we might try taking $\L$ to be the full subcategory of $\E$
on the local objects; but since this subcategory is not typically
complete nor cocomplete, its inclusion functor into $\E$ will
typically not have the required left adjoint. So instead, we take
$\L$-objects to be $\E$-objects \emph{endowed} with algebraic
structure witnessing their locality, and take $\L$-maps to be
$\E$-maps which \emph{strictly} preserve this structure. This
algebraicity of the definition of $\L$ now ensures that it is a
locally presentable category, and that the forgetful functor to $\E$
has the desired left adjoint; this
extends~\cite{Nikolaus2011Algebraic}'s construction of an adjunction
with \emph{algebraically fibrant} objects. Note that there can be many
different ways of choosing the algebraic structure which witnesses
locality, and not all of these are appropriate; indeed, choosing the
correct definition of $\L$ is the most subtle point in our argument.

Thereafter, the remainder of the argument is conceptually clear. We
first \emph{projectively} transfer the given model structure on $\E$
along the right adjoint $G \colon \L \rightarrow \E$, and then
\emph{injectively} transfer back along $F \colon \E \rightarrow \L$.
Local presentability ensures that these transfers exist so long as the
requisite acyclicity conditions are satisfied
(cf.~Proposition~\ref{prop:1} below). For the transfer to $\L$, we
verify acylicity using a path object argument, since every object of
$\L$ will be fibrant; for the transfer back to $\E$, acyclicity will
be immediate so long as $GF$ preserves weak equivalences---which might
be verified, for example, using left properness of $\E$.

At this point, we have a new model structure $\E'$ on the underlying
category of $\E$, which has more weak equivalences and cofibrations,
and makes every local object fibrant. However, it is not yet a
localisation of $\E$ since the cofibrations need not be the same.
Thus, the final step is to note that, since $\C_\E \subseteq \C_{\E'}$
and $\W_\E \subseteq \W_{\E'}$, we can use~\cite{Cole2006Mixing} to
\emph{mix} the model structures $\E$ and $\E'$, obtaining a model
structure $\E_\ell$ whose cofibrations are those of $\E$ and in which
every local object is fibrant; under appropriate homotopical closure
conditions on the class of local objects, the $\E_\ell$-fibrant
objects will be \emph{precisely} the local ones.

In this way, we may construct localisations using only the tools of
projective and injective liftings, and of mixing of model structures.
It turns out (cf.~Proposition~\ref{prop:11} below) that mixing of
model structures may be reduced in turn to liftings, so that we have a
construction of localisations from projective and injective liftings
alone. Note that this approach does \emph{not} avoid the cardinality
arguments involved in Smith's theorem; rather, it pushes them
elsewhere, namely into the construction of injective liftings of model
structures as detailed in~\cite{Makkai2014Cellular}. In particular,
our approach gives no more of an explicit grasp on the classes of maps
of a localisation than the usual one. However, we believe there are
still good reasons for adopting it.

One advantage of our approach is that it dualises
trivially to give a construction of Bousfield \emph{co}localisations,
wherein one enlarges the class of weak equivalences while fixing the
class of \emph{fibrations}; this time, one starts from the
\emph{co}local objects---those which should be the cofibrant objects
of the colocalised model structure---and constructs the desired
colocalisation with reference to an adjunction between $\E$ and a
category of ``algebraically colocal cofibrant objects''.

Another positive consequence of our approach, and our original
motivation for developing it, is that allows for an account of
(co)localisation for the \emph{algebraic} model structures of
Riehl~\cite{Riehl2011Algebraic}. These are combinatorially rich
presentations of model categories in which, among other things,
(acyclic) fibrant replacement constitutes a monad on the category of
arrows, and (acyclic) cofibrant replacement a comonad; they have been
used to derive non-trivial homotopical
results~\cite{Ching2014Coalgebraic, Barthel2013On-the-construction,
  Blumberg2014Homotopical}, and are of some importance in the
\emph{homotopy type theory} project~\cite{HoTT2013}. However, there is
no account of localisation for algebraic model structures as there
seems to be no ``algebraic'' version of Smith's theorem. On the other
hand, there \emph{are} algebraic versions of injective and projective
lifting~\cite[\sec 4.5]{Bourke2014AWFS1}; whence our interest. A
potential application of this would be to the study of localisation
for model structures which, while not cofibrantly generated in the
classical sense, \emph{are} cofibrantly generated in the algebraic
sense; see the discussion in~\cite{Bayeh2015Left-induced}.

As noted above, the subtlest point in our approach lies in choosing
the algebraic structure which constitutes the notion of
``algebraically local object''. The key issue is whether one can
construct the required path objects in $\L$, and this is sensitive
both to the choice of $\L$ and the nature of the model category $\E$;
see~\cite{Nikolaus2011Algebraic, Ching2014Coalgebraic} for some
discussion of this point. This delicacy is somewhat orthogonal to the
main thrust of our argument, and so in this paper, we sidestep it
entirely by concentrating on the situation in which the
\emph{property} and the \emph{structure} of locality necessarily
coincide. This is the setting of one-dimensional model structures, and
this is why we concentrate on this seemingly degenerate case.

In elementary terms, a model structure is one-dimensional when the
liftings involved in its factorisations are \emph{unique}. Such model
categories were introduced and investigated in~\cite{Pultr2002Free};
however, it was left open as to whether examples of such model
structures arise in mathematical practice. A subsidiary objective of
this paper is to show that, in fact, this is the case: for example, if
$A$ is a commutative ring, then there is a model structure on the
category $[\smash{\cat{Alg}_{A}^\mathrm{fp}}, \cat{Set}]$ of diagrams
of finitely presented $A$-algebras whose fibrant objects are sheaves
on the big Zariski topos of $A$ (i.e., generalised algebraic spaces
over $\mathrm{Spec}\,A$), and whose cofibrant--fibrant objects are
sheaves on the topological space $\mathrm{Spec}\,A$.

We conclude this introduction with a short overview of the contents of
the paper. In Section~\ref{sec:model-categ-backgr} we recall the
necessary model-categorical background on combinatoriality, lifting,
and mixing of model structures. In
Section~\ref{sec:1-dimensional-model}, we introduce one-dimensional
model structures and study their homotopical properties. Then in
Section~\ref{sec:local-coloc-one}, we implement our general approach
to localisation in the context of one-dimensional model structures, by
providing a set of conditions which perfectly characterise the
categories of fibrant objects in a localisation of a one-dimensional
model structure. In Section~\ref{sec:left-right-prop}, we explain how
matters are simplified by the assumption of left properness,
culminating in our first main result, Theorem~\ref{thm:3}; then in
Section~\ref{sec:relat-enrich-bousf} use this to recover the classical
account of localisation at a given set of maps in a left proper
one-dimensional model structure. In Section~\ref{sec:colocalisation},
we dualise our theory to the case of \emph{co}localisation for
one-dimensional model structures, obtaining our second main
Theorem~\ref{thm:4}; and finally, in Section~\ref{sec:examples}, we
illustrate our results with a range of examples of one-dimensional
model structures.

\section{Model-categorical background}
\label{sec:model-categ-backgr}

Throughout the paper, we write $(\C, \W, \F)$ for a model structure
with cofibrations $\C$, weak equivalences $\W$ and fibrations $\F$,
and write $\T\C = \C \cap \W$ and $\T\F = \F \cap \W$ for the acyclic
cofibrations and fibrations. We assume our model categories to be
locally small, complete and cocomplete, and endowed with functorial
factorisations; these induce functorial fibrant and cofibrant
replacements, which we write as $\eta \colon 1 \Rightarrow R$ and
$\varepsilon \colon Q \Rightarrow 1$. We write $\mathrm{RLP}(\K)$ or
$\mathrm{LLP}(\K)$ for the class of maps with the right or left
lifting property with respect to a class of maps $\K$, and write
$U^{-1}(\K)$ for the inverse image of the class under a functor $U$.

\begin{Defn}
  \label{def:6}
  Suppose that $\E$ is a category equipped with a model structure
  $(\C, \W, \F)$ and that $U \colon \D \rightarrow \E$. 
  \begin{itemize}[itemsep=0.25\baselineskip]
  \item The \emph{projectively lifted} model structure on $\D$, if it
    exists, is the one whose weak equivalences and fibrations are
    given by $U^{-1}(\W)$ and $U^{-1}(\F)$ respectively.
  \item The \emph{injectively lifted} model structure on $\D$, if it
    exists, is that whose cofibrations and weak equivalences are given
    by $U^{-1}(\C)$ and $U^{-1}(\W)$ respectively.
  \end{itemize}
\end{Defn}

The basic setting in which lifted model structures are guaranteed to
exist is that of combinatorial model categories. Recall that a model
category is called \emph{combinatorial} if its underlying category is
locally presentable~\cite{Gabriel1971Lokal}, and its two weak
factorisation systems $(\C, \T\F)$ and $(\T\C, \F)$ are cofibrantly
generated.

\begin{Prop}
  \label{prop:1}
  Let $\E$ be a combinatorial model category, let $\D$ be a locally
  presentable category, and let $U \colon \D \rightarrow \E$.
  \begin{enumerate}[(i),itemsep=0.25\baselineskip]
  \item If $U$ is a right adjoint, and the acyclicity condition
    $\mathrm{LLP}(U^{-1}(\F)) \subset U^{-1}(\W)$ holds, then the
    projective lifting along $U$ exists and is combinatorial.
  \item If $U$ is a left adjoint, and the acyclicity condition
    $\mathrm{RLP}(U^{-1}(\C)) \subset U^{-1}(\W)$ holds, then the
    injective lifting along $U$ exists and is combinatorial.
  \end{enumerate}
\end{Prop}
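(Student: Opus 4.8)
The plan is to prove both parts by invoking the standard recognition theorem for cofibrantly generated model structures, applied to candidate generating sets obtained by transporting the given ones across the adjunction. Let me think about which direction gives which.

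For part (i): $U \colon \D \to \E$ is a right adjoint, with left adjoint $L \colon \E \to \D$. Since $\E$ is combinatorial, its WFSs $(\C, \T\F)$ and $(\T\C, \F)$ are cofibrantly generated by sets $I$ and $J$. The projectively lifted structure on $\D$ should have fibrations $U^{-1}(\F)$ and weak equivalences $U^{-1}(\W)$, hence acyclic fibrations $U^{-1}(\T\F)$. The natural candidates for generating sets are $LI$ and $LJ$. By adjunction, a map $f$ in $\D$ has the RLP against $LI$ iff $Uf$ has the RLP against $I$, i.e. iff $Uf \in \T\F$, i.e. iff $f \in U^{-1}(\T\F)$. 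Similarly RLP against $LJ$ iff $f \in U^{-1}(\F)$. So the classes match up correctly. Then one applies Kan's recognition theorem (or the version of Smith's theorem for locally presentable categories): the structure exists provided (a) $U^{-1}(\W)$ has 2-out-of-3 and is closed under retracts — automatic since $\W$ is; (b) $U^{-1}(\W)$ is an accessibly-embedded accessible subcategory of the arrow category — this is where local presentability of $\D$ and accessibility of $\W \subseteq \E^{\mathbf 2}$ (itself a consequence of combinatoriality, via Smith) come in, pulled back along the accessible functor $U^{\mathbf 2}$; (c) the acyclicity condition $\mathrm{LLP}(\mathrm{RLP}(LJ)) \subseteq U^{-1}(\W)$, which unwinds — using the adjunction identity above — to exactly $\mathrm{LLP}(U^{-1}(\F)) \subseteq U^{-1}(\W)$, the stated hypothesis; and (d) $\mathrm{RLP}(LI) \subseteq \mathrm{RLP}(LJ) \cap U^{-1}(\W)$, i.e. $U^{-1}(\T\F) \subseteq U^{-1}(\F) \cap U^{-1}(\W)$, which is clear. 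Combinatoriality of the result is then immediate since $\D$ is locally presentable and the structure is cofibrantly generated by sets.

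For part (ii) I would dualise: $U \colon \D \to \E$ is now a left adjoint with right adjoint $R \colon \E \to \D$. The injectively lifted structure has cofibrations $U^{-1}(\C)$ and weak equivalences $U^{-1}(\W)$. Rather than re-running the generation argument, the cleanest route is to observe that the injective lifting along the left adjoint $U \colon \D \to \E$ is the same as asking for a model structure on $\D$; here the roles are reversed and one should transport along $R$ in the other direction — concretely, one can either cite the known ``acyclicity $\Rightarrow$ existence'' result for injective/left-induced structures along left adjoints into combinatorial categories (e.g. via \cite{Makkai2014Cellular} or the accessible-model-structure machinery), whose hypothesis is precisely $\mathrm{RLP}(U^{-1}(\C)) \subseteq U^{-1}(\W)$, or note that $\D^{\op} \to \E^{\op}$ becomes a right adjoint between combinatorial categories (using that the opposite of a locally presentable category need not be locally presentable — so in fact a direct dualisation does \emph{not} work), which tells me the honest proof of (ii) must go through the left-induced existence theorem directly rather than by formal duality with (i).

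The main obstacle is part (ii): left-induced (injective) model structures are genuinely harder than right-induced ones, because the generating cofibrations $U^{-1}(\C)$ need not form a set and there is no small generating set to feed into Kan's theorem. The essential input is the theorem of Makkai--Rosický / Bayeh et al.\ (building on Jeff Smith's construction) that a locally presentable category equipped with an accessible class of weak equivalences and an accessibly-embedded cofibrantly-generated-in-the-large weak factorisation system admits the model structure once the acyclicity condition holds; one verifies that $U^{-1}(\C) = \mathrm{LLP}(U^{-1}(\T\F))$-type data is accessible by pulling back the corresponding data on $\E$ along the accessible functor $U^{\mathbf 2}$, and that $U^{-1}(\W)$ is accessible for the same reason. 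So the proof of (ii) is really a citation-and-bookkeeping exercise verifying the hypotheses of that black-box theorem, with the acyclicity condition being the only non-formal point — and it is handed to us as an assumption. Part (i), by contrast, is elementary once the generating sets $LI$, $LJ$ are identified. I would present (i) in full detail and (ii) as an appeal to the cited literature with the hypothesis-checking spelled out.
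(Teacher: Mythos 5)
Your proposal is correct and follows essentially the same route as the paper, which simply cites Hirschhorn's transfer theorem for (i) (with smallness supplied by local presentability of $\D$) and the Makkai--Rosick\'y / Bayeh et al.\ left-induction theorem for (ii); your part (i) just unpacks that citation by exhibiting the generating sets $LI$, $LJ$ and checking the adjunction identities and acyclicity. The only cosmetic remark is that your condition (b) on accessibility of $U^{-1}(\W)$ is not needed for the Kan--Hirschhorn version of the transfer theorem in (i), so it is harmless extra baggage rather than a required step.
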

\begin{proof}
  (i) follows from~\cite[Theorem~11.3.2]{Hirschhorn2003Model} plus the
  fact that any set of maps in a locally presentable category permits
  the small object argument; the argument for (ii) is due
  to~\cite{Makkai2014Cellular}, but is given in the form we need
  in~\cite[Theorem~2.23]{Bayeh2015Left-induced}.
\end{proof}
Despite their surface similarity, the two parts of this result are
sharply different from each other. In (i), we obtain explicit choices
of generating (acyclic) cofibrations for $\D$ by applying $F$ to the
corresponding generators for $\E$. In (ii), by contrast, it is
typically impossible to write down explicit sets of generating
(acyclic) cofibrations for $\D$; one merely knows that they exist.

Note also the following result, which will be useful in the sequel. In
its statement, an \emph{accessible} functor is one preserving
$\kappa$-filtered colimits for a regular cardinal $\kappa$.
\begin{Prop}
  \label{prop:20}
  If $\E$ is a combinatorial model category, then it admits a
  cofibrant replacement functor $Q$ and fibrant replacement functor
  $R$ which are accessible.
\end{Prop}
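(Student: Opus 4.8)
The plan is to exploit the fact that a combinatorial model category is cofibrantly generated by a \emph{set} of maps in each of its two weak factorisation systems, and that the functorial factorisations producing $Q$ and $R$ may be taken to be the ones delivered by the small object argument applied to these generating sets. The key observation is then that the small object argument, when carried out over a locally presentable category with a set of generating maps, produces a functorial factorisation whose underlying functor on the arrow category is accessible: it is built by transfinitely iterating a step that only involves coproducts indexed by hom-sets and pushouts, both of which are accessible operations on a locally presentable category, and a colimit over a fixed ordinal, which is likewise accessible. Thus each of $Q$ and $R$ is obtained from the identity functor by a (bounded) accessible construction, and so is accessible.

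First I would recall, or cite from the literature on locally presentable categories (e.g.\ Ad\'amek--Rosick\'y), that the underlying category $\E$, being locally presentable, has all objects presentable, so that for a fixed set of maps $I$ there is a regular cardinal $\kappa$ such that every domain and codomain of a map in $I$ is $\kappa$-presentable; moreover $\kappa$ may be chosen so that $\E$ is locally $\kappa$-presentable. Second, I would observe that for such a $\kappa$, the single ``one-step'' functor of the small object argument---sending an arrow $f \colon A \to B$ to the pushout of $\coprod_{i \in I}\coprod_{\text{squares}} \mathrm{dom}(i) \to \mathrm{dom}(f)$ along $\coprod i$---preserves $\kappa$-filtered colimits in $\E^{\mathbf 2}$, because finite limits (needed to form the indexing sets of commuting squares with $\kappa$-presentable corners), small coproducts, and pushouts all commute with $\kappa$-filtered colimits in a locally $\kappa$-presentable category. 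Third, transfinitely composing this step $\kappa$ times (or any fixed ordinal length sufficient for the small object argument) is again a $\kappa$-filtered colimit of $\kappa$-accessible functors, hence accessible; and the cofibrant/fibrant replacement functor is extracted from this factorisation by composing with the (accessible, indeed continuous) domain/codomain functors $\E^{\mathbf 2} \to \E$ and the (accessible) functor $\E \to \E^{\mathbf 2}$ sending $X$ to $0 \to X$ or $X \to 1$. Composition of accessible functors being accessible, the result follows.

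The main obstacle I anticipate is purely bookkeeping: one must be careful that the ordinal along which the small object argument is iterated can be chosen \emph{independently} of the input arrow---this is exactly where local presentability (as opposed to mere cocompleteness) is used, since $\kappa$-presentability of the generating domains bounds the length needed---and that all the auxiliary functors ($\E^{\mathbf 2} \to \E$, $\E \to \E^{\mathbf 2}$, pushout, coproduct) are accessible for a \emph{common} regular cardinal, which one obtains by the standard fact that finitely many accessible functors are simultaneously accessible for some sufficiently large $\kappa$. None of this is deep, but it needs to be stated cleanly; I would likely phrase the argument as: ``the functorial factorisations of a combinatorial model category may be chosen to be accessible functors $\E^{\mathbf 2} \to \E^{\mathbf 2}$, by the small object argument, and hence so are $Q$ and $R$,'' with a pointer to the relevant treatment of the small object argument (e.g.\ \cite{Makkai2014Cellular} or Ad\'amek--Rosick\'y) for the accessibility bookkeeping.
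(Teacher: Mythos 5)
Your proposal is correct and is essentially the argument behind the result the paper invokes: the paper proves this proposition simply by citing Dugger's \emph{Combinatorial model categories have presentations}, Proposition~2.3, whose proof is exactly the small-object-argument accessibility bookkeeping you describe (one-step functor built from hom-sets out of $\kappa$-presentable generators, coproducts, pushouts, and a transfinite composite of fixed length, all preserving $\kappa$-filtered colimits). Your caveat that the factorisations ``may be taken to be'' those produced by the small object argument is also the right reading of the statement, since an arbitrary functorial factorisation on a combinatorial model category need not be accessible.
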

\begin{proof}
  See~\cite[Proposition~2.3]{Dugger2001Combinatorial}.
\end{proof}
The use we make of this fact is encapsulated in the following standard
result from the theory of locally presentable categories.
\begin{Prop}
  \label{prop:28}
  If $\A \subseteq \E$ is a full reflective (resp.,~coreflective)
  subcategory and $\E$ is locally presentable, then $\A$ is locally
  presentable if and only if the reflector
  $R \colon \E \rightarrow \E$ (resp.,~coreflector $Q \colon \E
  \rightarrow \E)$ is accessible.
\end{Prop}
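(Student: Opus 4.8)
The plan is to prove both directions of the equivalence simultaneously, treating the reflective case and dualising at the end. First I would recall the relevant characterisation of locally presentable full subcategories: a full replete subcategory $\A \subseteq \E$ of a locally presentable category is locally presentable precisely when it is \emph{accessibly embedded} and \emph{accessible}, i.e. closed under $\kappa$-filtered colimits for some regular $\kappa$ and itself accessible. Since a reflective subcategory of a locally presentable category is automatically complete and cocomplete, the only content is accessibility, and this reduces to showing that the inclusion $\A \hookrightarrow \E$ preserves $\kappa$-filtered colimits for some $\kappa$.

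Next I would make the standard identification between the reflective subcategory $\A$ and the idempotent monad (reflector) $R \colon \E \to \E$ it induces: the objects of $\A$ are, up to isomorphism, exactly the $R$-local objects, i.e. those $X$ for which the unit $\eta_X \colon X \to RX$ is an isomorphism. The key step, then, is the equivalence ``$\A$ closed under $\kappa$-filtered colimits in $\E$'' $\iff$ ``$R$ preserves $\kappa$-filtered colimits''. For the forward direction: if $R$ is accessible, pick $\kappa$ so that $R$ preserves $\kappa$-filtered colimits; then given a $\kappa$-filtered diagram in $\A$ with colimit $X$ in $\E$, the unit $\eta_X$ is a $\kappa$-filtered colimit of the isomorphisms $\eta_{X_i}$, hence itself an isomorphism, so $X \in \A$; moreover the reflection of a $\kappa$-presentable object is $\kappa$-presentable in $\A$, and these generate, giving accessibility of $\A$. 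For the reverse: if $\A$ is closed under $\kappa$-filtered colimits and is $\lambda$-accessible, choose $\mu \geq \kappa, \lambda$ large enough that $\E$ is $\mu$-accessible with $\mu$-presentable objects forming a strong generator; then one checks $R$ preserves $\mu$-filtered colimits by writing an arbitrary object as a $\mu$-filtered colimit of $\mu$-presentables, using that $R$ is a left adjoint (so preserves \emph{all} colimits) composed with the inclusion $\A \hookrightarrow \E$, which preserves $\mu$-filtered colimits by hypothesis.

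The main obstacle I expect is the careful juggling of cardinals in the reverse implication: one must simultaneously arrange that $\E$ is $\mu$-accessible, that $\A$ is closed under $\mu$-filtered colimits, and that the reflector restricted to $\mu$-presentables lands among $\mu'$-presentables for some controlled $\mu'$, so that $R$ genuinely preserves $\mu$-filtered colimits rather than merely sending them to colimits. This is where one invokes the sharply-filtered / saturation arguments from the Adámek–Rosický theory of locally presentable categories. Since all of this is entirely standard, I would simply cite it: the whole proposition is recorded, for instance, in~\cite{Adamek1994Locally} (the reflective case being their treatment of orthogonal-reflective subcategories), with the coreflective case following by the evident dualisation, noting that a coreflective subcategory of a locally presentable category is again complete and cocomplete so that only accessibility of the coreflector $Q$ is at stake. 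In the body of the paper I would therefore give at most a one-line proof pointing to this reference, since the result is used only as a black box in conjunction with Proposition~\ref{prop:20}.
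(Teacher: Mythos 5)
Your reflective case is sound, and your ``only if'' direction agrees in substance with the paper's: both reduce to the standard fact that an adjunction between locally presentable categories induces an accessible (co)monad (the paper cites \cite[Satz~14.6]{Gabriel1971Lokal}; your decomposition $R = i \circ L$ with $i$ an accessible right adjoint is the same observation). Your ``if'' direction in the reflective case---closure of $\A$ under $\kappa$-filtered colimits via the unit, plus the reflections of the $\kappa$-presentables of $\E$ as a generator---is the classical argument and is fine.

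The genuine gap is the final claim that the coreflective case ``follows by the evident dualisation.'' Local presentability is not a self-dual notion, and the generator argument does not dualise: in the coreflective case the coreflector $C \colon \E \rightarrow \A$ is a \emph{right} adjoint, so there is no adjunction isomorphism making $C$ of a $\kappa$-presentable object $\kappa$-presentable in $\A$, nor any reason for such objects to generate $\A$ under $\kappa$-filtered colimits. What must be shown in the ``if'' direction is accessibility of $\A$ given accessibility of $Q$ (completeness and cocompleteness of $\A$ are automatic, as you note), and this requires a separate input. The paper handles both cases uniformly by observing that $\A$ is the full subcategory of $\E$ on which $\eta \colon 1 \Rightarrow R$ (resp.\ $\varepsilon \colon Q \Rightarrow 1$) becomes invertible, invoking \cite[Theorem~5.1.6]{Makkai1989Accessible} on accessibility of inverters of natural transformations between accessible functors, and then concluding by \cite[Theorem~2.47]{Adamek1994Locally} that an accessible, cocomplete category is locally presentable. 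If you want to keep your more elementary argument for the reflective half, you should at minimum replace the appeal to duality in the coreflective half by a citation of this form (or by the accessibility of categories of coalgebras for an accessible idempotent comonad); your reference to Adámek--Rosický's treatment of orthogonal-reflective subcategories covers only the reflective case.
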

\begin{proof}
  The ``only if'' direction follows on observing that,
  by~\cite[Satz~14.6]{Gabriel1971Lokal}, any adjunction between
  locally presentable categories induces both an accessible monad and an
  accessible comonad. In the ``if'' direction, note that, in either
  case, the subcategory $\A$ is complete and cocomplete, and so
  by~\cite[Theorem~2.47]{Adamek1994Locally} will be locally
  presentable so as long as it is an accessible
  category~\cite{Makkai1989Accessible}. But $\A$ is the universal
  subcategory of $\E$ on which $\eta \colon 1 \Rightarrow R$
  (resp.,~$\varepsilon \colon Q \Rightarrow 1$) becomes invertible, and
  so by~\cite[Theorem~5.1.6]{Makkai1989Accessible} is accessible
  since $R$ (resp.,~$Q$) is so.
\end{proof}

We now recall Cole's result~\cite{Cole2006Mixing} on \emph{mixing}
model structures.

\begin{Prop}
  \label{prop:11}
  Let $(\C_1, \W_1, \F_1)$ and $(\C_2, \W_2, \F_2)$ be combinatorial
  model structures on the same category $\E$. If $\F_1 \subseteq \F_2$
  and $\W_1 \subseteq \W_2$, then there is a combinatorial \emph{mixed
    model structure} $(\C_m, \W_m, \F_m)$ on $\E$ with $\F_m = \F_1$
  and $\W_m = \W_2$.
\end{Prop}
\begin{proof}
  Consider the combinatorial model structure $(\C, \W, \F)$ on
  $\E \times \E$ which in its first component is given by
  $(\T\C_1, \mathrm{all}, \F_1)$ and in its second by
  $(\C_2, \W_2, \F_2)$. The diagonal
  $\Delta \colon \E \rightarrow \E \times \E$ is a right adjoint
  between locally presentable categories, and we have that
  $ \Delta^{-1}(\F) = \F_1 \cap \F_2 = \F_1$ and
  $\Delta^{-1}(\W) = \mathrm{all} \cap \W_2 = \W_2$. Thus, since
  $\mathrm{LLP}(\Delta^{-1}(\F)) = \T\C_1 \subseteq \W_1 \subseteq
  \W_2 = \Delta^{-1}(\W)$, the projectively lifted model structure
  $(\C_m, \W_m, \F_m)$ exists, and has $\W_m = \W_2$ and
  $\F_m = \F_1$.
\end{proof}
The proof we give is less explicit than Cole's; he constructs the
required factorisations directly rather than appealing to a lifting
result. This allows him to avoid the assumption of combinatoriality of
the two starting model structures, but means that he does not derive
it for the mixed model structure either.

It is in fact easy to derive explicit generating sets for the mixed
model structure from ones for the two given model structures. Since
$\F_m = \F_1$, a generating set of mixed acyclic cofibrations is given
by any generating set for $\T\C_1$; and since
$\T\F_m = \F_1 \cap \W_2 = \F_1 \cap \F_2 \cap \W_2 = \F_1 \cap
\T\F_2$, a generating set of mixed cofibrations is given by the union
of any generating set for $\T\C_1$ and any generating set for $\C_2$.

Cole's construction of the mixed model structure dualises without
difficulty. Our proof also dualises by using injective rather than
projective liftings. Once again, we must add the assumption of
combinatoriality of the input model structures, but gain it on the
output side. This time we cannot, in general, find explicit generating
sets of maps for the mixed model structure.

\begin{Prop}
  \label{prop:19}
  Let $(\C_1, \W_1, \F_1)$ and $(\C_2, \W_2, \F_2)$ be combinatorial
  model structures on the same category $\E$. If $\C_1 \subseteq \C_2$
  and $\W_1 \subseteq \W_2$, then there is a combinatorial
  \emph{mixed model structure} $(\C_m, \W_m, \F_m)$ on $\E$ with
  $\C_m = \C_1$ and $\W_m = \W_2$.
\end{Prop}

\section{One-dimensional model structures}
\label{sec:1-dimensional-model}

\begin{Defn}
  \label{def:5}
  The \emph{model category of $0$-types} is the category of sets
  endowed with the cartesian monoidal model structure
  $(\text{all}, \text{iso}, \text{all})$. A model category $\E$ is
  called \emph{one-dimensional} if it is enriched over the model
  category of $0$-types.
\end{Defn}

The following result characterises the underlying weak factorisation
systems of one-dimensional model structures; for a yet more
comprehensive list of characterisations,
see~\cite[Proposition~2.3]{Rosicky2007Factorization}.

\begin{Prop}
  \label{prop:17}
  The following are equivalent for a weak factorisation system $(\L,
  \R)$ on a finitely complete and cocomplete category:
  \begin{enumerate}[(i)]
  \item Every $\L$-map has the unique lifting property
    against each $\R$-map;
  \item If $f \colon A \rightarrow B$ is in $\L$, then so is the
    codiagonal $\nabla \colon B +_A B \rightarrow B$;
  \item If $f \colon A \rightarrow B$ is in $\R$, then so is the
    diagonal $\Delta \colon A \rightarrow A \times_B A$;
  \item If $f \colon A \rightarrow B$ and $g \colon B \rightarrow C$
    and $f \in \L$, then $g \in \L$ iff $gf \in \L$;
  \item If $f \colon A \rightarrow B$ and $g \colon B \rightarrow C$
    and $g \in \R$, then $f \in \R$ iff $gf \in \R$.
  \end{enumerate}
\end{Prop}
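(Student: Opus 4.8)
The plan is to establish the cyclic chain of implications (i) $\Rightarrow$ (iv) $\Rightarrow$ (ii) $\Rightarrow$ (i), and then to obtain (iii) and (v) for free by duality. The duality is the evident one: a weak factorisation system $(\L, \R)$ on $\E$ gives a weak factorisation system $(\R^\op, \L^\op)$ on $\E^\op$, and under this passage the unique lifting property (i) is self-dual; the codiagonal of an $\L$-map in $\E$ corresponds to the diagonal of an $\R$-map in $\E^\op$, so that (ii) for $\E^\op$ reads as (iii) for $\E$; and the cancellation condition (iv), read in $\E^\op$, becomes (v) for $\E$. Throughout I will use freely the standard facts that $\L = \mathrm{LLP}(\R)$ and $\R = \mathrm{RLP}(\L)$, that $\L$ is stable under pushout and $\R$ under pullback, and that the stated hypotheses supply the binary pushout $B +_A B$ and pullback $A \times_B A$ along the relevant (co)spans.

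For (i) $\Rightarrow$ (iv): the implication ``$g \in \L \Rightarrow gf \in \L$'' needs no hypothesis, being closure of $\L$ under composition, so the content is the converse. Suppose $f \in \L$ and $gf \in \L$, and consider a lifting problem of $g$ against some $p \in \R$ with top $u \colon B \rightarrow X$ and bottom $v \colon C \rightarrow Y$. The outer rectangle is a lifting problem of $gf \in \L$ against $p$, hence has a filler $k \colon C \rightarrow X$ with $kgf = uf$ and $pk = v$. Now $kg$ and $u$ are both fillers for the lifting problem of $f \in \L$ against $p$ whose top is $uf$ and whose bottom is $pu$ (note $p(kg) = vg = pu$); so by uniqueness $kg = u$, which exhibits $k$ as a filler for the original square and shows $g \in \mathrm{LLP}(\R) = \L$.

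For (iv) $\Rightarrow$ (ii): given $f \colon A \rightarrow B$ in $\L$, the first coprojection $\iota_1 \colon B \rightarrow B +_A B$ is a pushout of $f$ along $f$, so lies in $\L$; since $\nabla \iota_1 = \id_B$ also lies in $\L$, condition (iv) applied to the composable pair $(\iota_1, \nabla)$ forces $\nabla \in \L$. For (ii) $\Rightarrow$ (i): a lifting problem of $f \in \L$ against $g \in \R$ always has a solution, so only uniqueness is in question; if $h_1, h_2 \colon B \rightarrow X$ are two solutions then $h_1 f = h_2 f$, so they induce $\langle h_1, h_2 \rangle \colon B +_A B \rightarrow X$, and a short check shows this sits over the bottom map of the original square along $\nabla$. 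Since $\nabla \in \L$ by (ii), there is a filler $w \colon B \rightarrow X$ with $w \nabla = \langle h_1, h_2 \rangle$, whence $h_1 = w \nabla \iota_1 = w = w \nabla \iota_2 = h_2$.

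I do not anticipate a genuine obstacle: each implication is a short diagram chase, and the only points needing care are setting up each lifting square so that uniqueness is invoked against the right map, and, in the duality step, keeping straight that (ii) pairs with (iii) and (iv) with (v) rather than with themselves.
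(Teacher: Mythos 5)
Your proof is correct, and it is genuinely more informative than what the paper does: the paper simply cites \cite[\S 4.5]{Bousfield1977Constructions} for (i) $\Leftrightarrow$ (ii) $\Leftrightarrow$ (iii) and \cite[Satz~3]{Ringel1970Diagonalisierungspaare.} for (i) $\Leftrightarrow$ (iv) $\Leftrightarrow$ (v), giving no argument of its own, whereas you supply a complete, self-contained derivation. Your organisation also differs from the cited split: rather than proving the two triples of equivalences separately, you run a single cycle (i) $\Rightarrow$ (iv) $\Rightarrow$ (ii) $\Rightarrow$ (i) and then obtain (iii) and (v) by passing to $(\R^\op,\L^\op)$ on $\E^\op$, correctly matching (ii) with (iii) and (iv) with (v) under duality; since the self-dual condition (i) appears in both cycles, all five statements are linked. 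Each step checks out: in (i) $\Rightarrow$ (iv) the two candidate fillers $kg$ and $u$ for the lifting problem of $f$ against $p$ do both have top $uf$ and bottom $pu$, so uniqueness forces $kg = u$ and hence $k$ solves the original square, and membership in $\L = \mathrm{LLP}(\R)$ needs only existence of fillers; in (iv) $\Rightarrow$ (ii) you correctly use pushout-stability of $\L$ to place $\iota_1$ in $\L$ and then cancel against $\nabla\iota_1 = \id_B$; and in (ii) $\Rightarrow$ (i) the square $g\langle h_1,h_2\rangle = v\nabla$ does commute, so the filler $w$ collapses $h_1 = w\nabla\iota_1 = w = w\nabla\iota_2 = h_2$. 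The only facts you take for granted --- that $\L = \mathrm{LLP}(\R)$ and that $\L$ is closed under pushout, composition and contains the isomorphisms --- are standard for weak factorisation systems, so nothing is missing. What your approach buys is a proof readable without chasing two references from 1970 and 1977; what the paper's approach buys is brevity.
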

\begin{proof}
  (i) $\Leftrightarrow$ (ii) $\Leftrightarrow$ (iii) by~\cite[\sec
  4.5]{Bousfield1977Constructions}, while (i) $\Leftrightarrow$ (iv)
  $\Leftrightarrow$ (v)
  by~\cite[Satz~3]{Ringel1970Diagonalisierungspaare.}.
\end{proof}

We call a weak factorisation system satisfying these conditions
\emph{orthogonal}.

\begin{Prop}
  \label{prop:21}
  The following are equivalent for a locally small model category $\E$:
  \begin{enumerate}[(i)]
  \item $\E$ is one-dimensional;
  \item The weak factorisation systems $(\T\C, \F)$ and $(\C, \T\F)$
    of $\E$ are orthogonal.
  \end{enumerate}
\end{Prop}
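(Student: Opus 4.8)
The plan is to prove the equivalence by first clarifying what one-dimensionality says concretely, then unpacking it into statements about the two weak factorisation systems. A model category $\E$ is enriched over the model category of $0$-types precisely when each hom-object $\E(A,B)$ is a set (i.e. a $0$-type) and the pushout-product axiom holds relative to the model structure $(\text{all},\text{iso},\text{all})$ on $\Set$. Since $\E$ is already locally small, the real content is the enriched pushout-product (SM7) axiom: for a cofibration $i$ and fibration $p$ in $\E$, the induced map of hom-sets $\E(B,X) \to \E(A,X)\times_{\E(A,Y)}\E(B,Y)$ must be a fibration of $0$-types (always true, as everything is a fibration), and a weak equivalence — i.e.\ a bijection — whenever $i$ or $p$ is acyclic. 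So one-dimensionality of $\E$ amounts exactly to: the map $\E(B,X)\to\E(A,X)\times_{\E(A,Y)}\E(B,Y)$ is a bijection whenever $i\in\C$ and $p\in\T\F$, or whenever $i\in\T\C$ and $p\in\F$. But surjectivity of that map is the ordinary lifting property, and injectivity is uniqueness of lifts; thus the SM7 condition for the pair $(\C,\T\F)$ is precisely ``every $\C$-map has the unique lifting property against each $\T\F$-map,'' and similarly for $(\T\C,\F)$. This is condition (i) of Proposition~\ref{prop:17} applied to each of the two weak factorisation systems, which by that proposition is equivalent to orthogonality of each.

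Concretely, I would argue as follows. For (i) $\Rightarrow$ (ii): assume $\E$ is one-dimensional. Both $(\T\C,\F)$ and $(\C,\T\F)$ are weak factorisation systems on the finitely complete and cocomplete category $\E$, so Proposition~\ref{prop:17} applies to each; it suffices to check clause~(i) of that proposition for each. Unwinding the enrichment over $0$-types as above, the SM7 axiom forces the hom-set comparison map to be a bijection exactly in the two cases listed, which is exactly unique lifting of $\C$ against $\T\F$ and of $\T\C$ against $\F$. Hence both weak factorisation systems are orthogonal. For (ii) $\Rightarrow$ (i): conversely, if both $(\T\C,\F)$ and $(\C,\T\F)$ are orthogonal, then by Proposition~\ref{prop:17}(i) every acyclic cofibration has the unique lifting property against every fibration, and every cofibration has the unique lifting property against every acyclic fibration. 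Reading this back through the hom-set comparison map, the map $\E(B,X)\to\E(A,X)\times_{\E(A,Y)}\E(B,Y)$ is a bijection whenever the relevant leg is acyclic — which is precisely the enriched-over-$0$-types version of SM7 — and so $\E$ is one-dimensional.

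The only genuine subtlety, and the step I expect to need the most care, is the translation between ``$\E$ is enriched over the model category of $0$-types'' and the unique-lifting statements: one must confirm that an enrichment of $\E$ over $\Set$ with its cartesian closed structure, compatible with the model structures in the sense required of a $\V$-model category, reduces exactly to the SM7/pushout-product condition and nothing more — in particular that the unit axiom and the hom being a $0$-type impose no extra constraints beyond what is automatic for an ordinary locally small model category. Once that identification is made, that the bijectivity of the comparison map splits as (surjectivity $=$ existence of lifts) and (injectivity $=$ uniqueness of lifts) is routine, and Proposition~\ref{prop:17} does the rest. I would also remark that, by Proposition~\ref{prop:17}, orthogonality of one of the two weak factorisation systems need not entail orthogonality of the other, so both conditions in (ii) are genuinely required.
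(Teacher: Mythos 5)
Your proof is correct, but it unwinds the enrichment axiom in a different (adjoint) form from the paper. The paper uses the \emph{tensor} form of the pushout--product axiom together with the generating cofibrations $\{0 \to 1,\ 2 \to 1\}$ of the $0$-types model structure (and the fact that it has no generating acyclic cofibrations): the pushout--tensor of $f \colon A \to B$ with $0 \to 1$ is $f$ itself, and with $2 \to 1$ is the codiagonal $\nabla \colon B +_A B \to B$, so enrichment reduces immediately to closure of $\C$ and $\T\C$ under codiagonals, i.e.\ clause (ii) of Proposition~\ref{prop:17}. You instead use the \emph{hom} form (SM7), identifying bijectivity of $\E(B,X) \to \E(A,X) \times_{\E(A,Y)} \E(B,Y)$ with the unique lifting property, which is clause (i) of Proposition~\ref{prop:17}. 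Both routes are legitimate and of comparable length; the paper's buys a very short computation (two generating maps, no discussion of what the enrichment consists of beyond tensors, which for $\Set$ are copowers), while yours lands directly on the lifting-theoretic characterisation of orthogonality and makes the ``existence $=$ surjectivity, uniqueness $=$ injectivity'' mechanism explicit. The one point you rightly flag as needing care --- that for a locally small, complete and cocomplete category the $\Set$-enrichment is canonical, (co)tensors are (co)powers, the unit axiom is vacuous, and the hom and tensor forms of SM7 agree --- is indeed the only translation step, and it goes through without incident under the paper's standing hypotheses.
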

\begin{proof}
  The model structure for $0$-types on $\cat{Set}$ has no generating
  acyclic cofibrations, and generating cofibrations
  $\{0 \rightarrow 1, \ 2 \rightarrow 1\}$. For any map
  $f \colon A \rightarrow B$ in $\E$, its pushout tensor with
  $0 \rightarrow 1$ in $\cat{Set}$ is $f$, while its pushout tensor
  with $2 \rightarrow 1$ is $\nabla \colon B +_A B \rightarrow B$. So
  $\E$ is enriched over the model structure for $0$-types if and only
  if both $\C$ and $\T\C$ satisfy the closure condition in
  Proposition~\ref{prop:17}(ii).
\end{proof}

By the standard properties of orthogonal factorisation
systems~\cite{Freyd1972Categories}, factorisations of maps in a
one-dimensional model category $\E$ are unique to within unique
isomorphism. We also have the following good behaviour of the full
subcategories $i \colon \E_f \hookrightarrow \E$ and
$j \colon \E_c \hookrightarrow \E$ of fibrant and cofibrant objects:
\begin{Prop}
  \label{prop:22}
  For any one-dimensional model category $\E$, there are adjunctions
  \begin{equation*}
    \cd{
      {\E_f} \ar@<-4.5pt>[r]_-{i} \ar@{}[r]|-{\bot} &
      {\E} \ar@<-4.5pt>[l]_-{R} & & 
      {\E_c} \ar@<-4.5pt>[r]_-{j} \ar@{}[r]|-{\top} &
      {\E\rlap{ .}} \ar@<-4.5pt>[l]_-{Q}
    }
  \end{equation*}
  In particular, both $\E_c$ and $\E_f$ are complete and cocomplete; if
  $\E$ is combinatorial, then they are moreover locally presentable.
\end{Prop}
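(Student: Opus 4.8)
The plan is to identify the reflector $\E \to \E_f$ with the fibrant replacement functor $R$ and the coreflector $\E \to \E_c$ with the cofibrant replacement functor $Q$, using one-dimensionality to promote the usual natural weak equivalences into genuine adjunction units and counits.

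First I would treat $\E_f$. By Proposition~\ref{prop:21} the weak factorisation system $(\T\C, \F)$ of $\E$ is orthogonal, so by Proposition~\ref{prop:17}(i) every acyclic cofibration has the \emph{unique} lifting property against every fibration. Recall that the functorial fibrant replacement is obtained by factoring the map $X \to 1$ to the terminal object as an acyclic cofibration $\eta_X \colon X \to RX$ followed by a fibration $RX \to 1$; so $RX$ is fibrant and $\eta \colon 1 \Rightarrow R$ is natural. I claim $(RX, \eta_X)$ is a reflection of $X$ along $i \colon \E_f \hookrightarrow \E$. Indeed, given any $Y \in \E_f$ and any $f \colon X \to iY$, the square with left edge $\eta_X$, right edge the fibration $iY \to 1$, top edge $f$ and bottom edge $RX \to 1$ admits a \emph{unique} diagonal filler $\bar f \colon RX \to Y$; this $\bar f$ is exactly the unique map of $\E_f$ with $i\bar f \circ \eta_X = f$. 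By the pointwise criterion for the existence of adjoints, $i$ therefore has a left adjoint, which is necessarily (isomorphic to) $R$, exhibiting $\E_f$ as a reflective subcategory. Dually, factoring $0 \to Y$ as a cofibration $0 \to QY$ followed by an acyclic fibration $\varepsilon_Y \colon QY \to Y$ and invoking the orthogonality of $(\C, \T\F)$ from Proposition~\ref{prop:21} together with Proposition~\ref{prop:17}(i), one checks that $(QY, \varepsilon_Y)$ is a coreflection of $Y$ along $j \colon \E_c \hookrightarrow \E$: for $X \in \E_c$ and $f \colon jX \to Y$, the square with left edge the cofibration $0 \to X$ and right edge the acyclic fibration $\varepsilon_Y$ has a unique filler $X \to QY$. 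Hence $j$ has a right adjoint $Q$, and $\E_c$ is coreflective.

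For the stated consequences, I would use the standard facts that a full reflective subcategory of a complete and cocomplete category is itself complete and cocomplete (limits being created by the inclusion, and colimits computed by applying the reflector to the colimit formed in $\E$), and dually for coreflective subcategories; since model categories are by hypothesis complete and cocomplete, so are $\E_f$ and $\E_c$. If moreover $\E$ is combinatorial, then it is locally presentable, and by Proposition~\ref{prop:20} its replacement functors $R$ and $Q$ are accessible; thus the reflector of $\E_f$ and the coreflector of $\E_c$ are accessible, and Proposition~\ref{prop:28} yields that $\E_f$ and $\E_c$ are locally presentable.

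I expect the only delicate point to be the first step: checking that orthogonality genuinely promotes the fibrant (resp.\ cofibrant) replacement into an adjoint, i.e.\ that the diagonal fillers are natural in both variables and satisfy the triangle identities. This, however, is forced by the uniqueness clause in Proposition~\ref{prop:17}(i)---any two candidate fillers of the relevant lifting square must coincide---so no computation beyond bookkeeping is required.
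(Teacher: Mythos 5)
Your proposal is correct and follows essentially the same route as the paper: both exhibit the reflection via the unique lifting of a fibrant $Y$ against the acyclic cofibration $\eta_X \colon X \to RX$ (giving $\E(X,Y) \cong \E_f(RX,Y)$ naturally), argue dually for $\E_c$, and then deduce completeness, cocompleteness and, in the combinatorial case, local presentability from Propositions~\ref{prop:20} and~\ref{prop:28}. No gaps.
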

\begin{proof}
  Each $Y \in \E_f$ lifts uniquely against each acyclic cofibration
  $\eta_X \colon X \rightarrow RX$, whence $\E(X,Y) \cong \E_f(RX,Y)$,
  naturally in $X$ and $Y$. Thus $\E_f$ is reflective in $\E$, and
  so complete and cocomplete since $\E$ is; we argue dually for
  $\E_c$. Finally, if $\E$ is combinatorial, then $\E_c$ and $\E_f$
  are locally presentable by Propositions~\ref{prop:20}
  and~\ref{prop:28}.
  \end{proof}
Moreover, we have the following homotopical properties:
\begin{Prop}
  \label{prop:5}
  The following are true in a one-dimensional model category:
  \begin{enumerate}[(i)]
  \item Every map between (co)fibrant objects is a (co)fibration.
  \item $Q$ and $R$ preserve and reflect weak equivalences.
  \item $R$ preserves cofibrations and inverts acyclic cofibrations.
  \item $Q$ preserves fibrations and inverts acyclic fibrations.
  \item Any weak equivalence between fibrant--cofibrant objects is an
    isomorphism.
  \item There is a natural isomorphism $QR \cong RQ$.
  \item A map is a weak equivalence if and only if it is inverted by
    $QR \cong RQ$.
  \end{enumerate}
\end{Prop}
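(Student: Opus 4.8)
The plan is to deduce all seven statements from the orthogonality of the weak factorisation systems $(\T\C,\F)$ and $(\C,\T\F)$ provided by Proposition~\ref{prop:21}, freely using the equivalent formulations of orthogonality collected in Proposition~\ref{prop:17}---most often the cancellation properties (iv) and (v) and the uniqueness of diagonal fillers. Two immediate observations underpin everything. For (i): if $f\colon A\to B$ has cofibrant domain and codomain, then $0\to B$ factors as $0\to A\xrightarrow{f}B$ with both $0\to A$ and $0\to B$ in $\C$, so $f\in\C$ by left-cancellation of cofibrations; dually, a map between fibrant objects is a fibration, by right-cancellation of fibrations. And since a map lying in both classes of an orthogonal weak factorisation system admits a two-sided inverse (lift it against itself), we record that any map in $\T\C\cap\F$, and any map in $\C\cap\T\F$, is an isomorphism.

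Claim (ii) is the usual $2$-out-of-$3$ argument applied to the naturality squares of $\varepsilon\colon Q\To 1$ and $\eta\colon 1\To R$, whose components $\varepsilon_X$ and $\eta_X$ are an acyclic fibration and an acyclic cofibration respectively. For (iii): in the square $Rf\circ\eta_A=\eta_B\circ f$, the right-hand composite lies in $\C$ (resp.\ in $\T\C$) when $f$ does, since $\eta_B$ is an acyclic cofibration; as $\eta_A$ also lies in $\C$ (resp.\ $\T\C$), left-cancellation gives $Rf\in\C$ (resp.\ $Rf\in\T\C$). Thus $R$ preserves cofibrations; and if $f$ is an acyclic cofibration then $Rf\in\T\C$, while $Rf\in\F$ by (i) since $RA$ and $RB$ are fibrant, so $Rf\in\T\C\cap\F$ is an isomorphism. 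Claim (iv) is the formal dual, using cofibrancy of $QA$ and $QB$. For (v), a weak equivalence $f$ between fibrant--cofibrant objects is a cofibration and a fibration by (i), hence lies in $\T\C\cap\F$ and is an isomorphism.

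Claim (vi) is where the real work lies. One first checks that $QRX$ and $RQX$ are fibrant--cofibrant: $QRX$ is cofibrant by construction and fibrant because $QRX\xrightarrow{\varepsilon_{RX}}RX\to 1$ is a composite of fibrations, and dually for $RQX$. Next I would manufacture a natural comparison map by two nested unique liftings. Writing $w_X=\eta_X\circ\varepsilon_X\colon QX\to RX$---a weak equivalence, natural in $X$---let $\ell_X\colon QX\to QRX$ be the unique map with $\varepsilon_{RX}\circ\ell_X=w_X$, which exists and is unique because $0\to QX$ is a cofibration and $\varepsilon_{RX}$ an acyclic fibration; then let $\theta_X\colon RQX\to QRX$ be the unique map with $\theta_X\circ\eta_{QX}=\ell_X$, using that $\eta_{QX}$ is an acyclic cofibration and $QRX\to 1$ a fibration. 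Naturality of $\ell$, and then of $\theta$, follows from the uniqueness of diagonal fillers: given $g\colon X\to Y$, the two composites $QRg\circ\ell_X$ and $\ell_Y\circ Qg$ solve one and the same lifting problem, and likewise for $\theta$. Finally $\ell_X$, and hence $\theta_X$, is a weak equivalence by $2$-out-of-$3$, so $\theta_X$ is a weak equivalence between fibrant--cofibrant objects and therefore an isomorphism by (v); its inverse is then automatically natural, yielding $QR\cong RQ$.

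Claim (vii) follows quickly: if $f$ is a weak equivalence then $QRf$ is one by (ii), hence an isomorphism by (v) since $QRA$ and $QRB$ are fibrant--cofibrant; conversely, if $QRf$ is invertible it is in particular a weak equivalence, and $Q$ and $R$ reflect weak equivalences by (ii), so $f$ is a weak equivalence. The main obstacle is clearly (vi): the other statements are short deductions from orthogonality, whereas (vi) requires recognising $QRX$ and $RQX$ as fibrant--cofibrant, building a natural map between them by iterated lifting, and---the genuinely one-dimensional input---using (v) to promote it to an isomorphism; extracting the naturality of $\ell$ and $\theta$ from the uniqueness of fillers is the fiddliest bookkeeping.
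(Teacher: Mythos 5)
Your proposal is correct and follows essentially the same route as the paper: parts (i)--(v) and (vii) are argued exactly as there, via the cancellation properties of Proposition~\ref{prop:17} and the observation that a map in $\T\C\cap\F$ (or $\C\cap\T\F$) is invertible. The only divergence is in (vi), where the paper simply observes that $QX\to QRX\to 1$ and $QX\to RQX\to 1$ are both $(\T\C,\F)$-factorisations of $QX\to 1$ and invokes uniqueness of orthogonal factorisations, whereas you unwind that uniqueness by hand---building the comparison map by two liftings and promoting it to an isomorphism via (v)---which is more laborious but equally valid.
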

\begin{proof}
  For (i), apply Proposition~\ref{prop:17}(iv) and (v) to composites
  $0 \rightarrow A \rightarrow B$ and $A \rightarrow B \rightarrow 1$.
  (ii) is standard in any model category. For (iii), consider the
  square
  \begin{equation}\label{eq:1}
    \cd{
      {A} \ar[r]^-{\eta_A} \ar[d]_{f} &
      {RA} \ar[d]^{Rf} \\
      {B} \ar[r]^-{\eta_B} &
      {RB}\rlap{ .}
    }
  \end{equation}
  Since $\eta_A$ and $\eta_B$ are (acyclic) cofibrations, if $f$ is a
  cofibration, then so is $Rf$ by Proposition~\ref{prop:17}(iv). If
  $f$ is moreover acyclic, then $Rf$ is both an acyclic cofibration
  and a fibration, whence invertible. Now (iv) is dual to (iii).
  For (v), note that any weak equivalence
  between cofibrant--fibrant objects is also a cofibration and a
  fibration, whence invertible. For (vi), note that by~(iii), $QRX$ is
  fibrant and $Q\eta_X \colon QX \rightarrow QRX$ is an acyclic
  cofibration; so by the uniqueness of the $(\T\C, \F)$-factorisation
  of $QX \rightarrow 1$, we must have $QRX \cong RQX$. Finally, (vii)
  follows from (ii) and (v) as $QR \cong RQ$ preserves and reflects
  weak equivalences.
\end{proof}

\section{Localities for one-dimensional model structures}
\label{sec:local-coloc-one}

We now begin to investigate the process of localisation for
one-dimensional combinatorial model categories. First we fix our
terminology.
\begin{Defn}
  \label{def:7}
  A \emph{combinatorial localisation} of a combinatorial
  one-dimensional model category $\E$ is a combinatorial
  one-dimensional model category $\E_\ell$ with the same underlying
  category, the same cofibrations, and at least as many acyclic
  cofibrations.
\end{Defn}

As in the introduction, a localisation $\E_\ell$ of $\E$
is completely determined by its subcategory $(\E_\ell)_f$ of fibrant
objects. Our objective in this section is to show that, in the
one-dimensional combinatorial setting, the subcategories so arising
are captured perfectly by the following notion of \emph{homotopical
  locality}.

\begin{Defn}
  \label{def:1}
  A \emph{locality} for a combinatorial one-dimensional model
  category $\E$ is a full subcategory $\E_{\ell f} \subseteq \E_f$,
  whose objects we call \emph{local}, such that:
  \begin{enumerate}[(i)]
  \item $\E_{\ell f}$ is locally presentable and reflective in $\E$
    via a reflector $\upsilon \colon 1 \rightarrow R_\ell$;
  \item If $X, Y \in \E_f$ are weakly equivalent, then $X$ is local
    just when $Y$ is.
  \end{enumerate}
  We call a locality \emph{homotopical}
  if, in addition:
  \begin{enumerate}[(i)]
    \addtocounter{enumi}{2}
  \item $R_\ell$  preserves weak
  equivalences.
  \end{enumerate}
\end{Defn}
Note that $\E_f$ is itself a locality---indeed, the maximal one---and
is homotopical by Proposition~\ref{prop:5}(ii).
\begin{Rk}
  \label{rk:8}
  The data for a homotopical locality resemble the input data for the
  Bousfield--Friedlander approach to
  localisation~\cite[Theorem~A.7]{Bousfield1978Homotopy}. Their
  setting also involves a right properness axiom which ensures that
  the necessary factorisations can be constructed in an elementary
  fashion; however, as noted in~\cite{Stanculescu2008Note}, this axiom
  can be dropped in the combinatorial setting, at the cost of losing
  an explicit grasp on the factorisations. The axioms for a
  homotopical locality above are a one-dimensional version of this
  more general form of the Bousfield--Friedlander axioms.
\end{Rk}
The easier direction is that any localisation gives rise to a
homotopical locality:

\begin{Prop}
  \label{prop:24}
  Let $\E$ be a combinatorial one-dimensional model category, and let
  $\E_\ell$ be a combinatorial localisation of $\E$. The
  subcategory $(\E_{\ell})_f$ of $\E_\ell$-fibrant objects is a
  homotopical locality for $\E$.
\end{Prop}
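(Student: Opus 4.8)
The plan is to verify that $(\E_\ell)_f$ satisfies the three conditions defining a homotopical locality. First I would establish condition (i), local presentability and reflectivity in $\E$. Since $\E_\ell$ is itself a one-dimensional combinatorial model category (by Definition~\ref{def:7}), Proposition~\ref{prop:22} applied to $\E_\ell$ gives an adjunction $i \dashv R_\ell \colon (\E_\ell)_f \rightarrow \E_\ell$, exhibiting $(\E_\ell)_f$ as a reflective, locally presentable subcategory of $\E_\ell$. But $\E_\ell$ has the same underlying category as $\E$, so this is already a reflection of $(\E_\ell)_f$ in $\E$; the reflector is the $\E_\ell$-fibrant replacement functor, with unit $\upsilon = \eta^{\E_\ell} \colon 1 \Rightarrow R_\ell$. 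Replete fullness is immediate from the definition of $(\E_\ell)_f$ as the class of $\E_\ell$-fibrant objects.

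Next I would establish condition (ii): for $X, Y \in \E_f$ weakly equivalent, $X$ is $\E_\ell$-fibrant iff $Y$ is. The key observation is that since $\C_\E = \C_{\E_\ell}$ and $\W_\E \subseteq \W_{\E_\ell}$, every $\E$-fibrant object is $\E_\ell$-fibrant: indeed $\E_\ell$-fibrations are $\mathrm{RLP}(\T\C_{\E_\ell}) \subseteq \mathrm{RLP}(\T\C_\E)$... wait, that inclusion goes the wrong way, so I need to be more careful. Actually the right statement is that $\F_{\E_\ell} \subseteq \F_\E$ (fewer fibrations after localising), so an $\E_\ell$-fibrant object need not be $\E$-fibrant in general; but here condition (ii) only concerns objects already known to lie in $\E_f$. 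The cleanest route is: a weak equivalence $f \colon X \rightarrow Y$ between $\E$-fibrant objects, by Proposition~\ref{prop:5}(i) applied in $\E$, is a fibration, hence an $\E$-acyclic fibration, hence in particular an $\E_\ell$-weak equivalence. Now if $X$ is $\E_\ell$-fibrant, I claim $Y$ is too: take the map $Y \rightarrow 1$ and factor $X \rightarrow 1$ as $X \rightarrow 1$ (already $\E_\ell$-fibrant), then use that $f$ is an $\E_\ell$-weak equivalence together with the $2$-out-of-$3$ and lifting properties. Concretely, since $\E_\ell$ is one-dimensional, I can invoke Proposition~\ref{prop:5}(ii) for $\E_\ell$: the $\E_\ell$-fibrant replacement $R_\ell$ preserves and reflects $\E_\ell$-weak equivalences, so $R_\ell f \colon R_\ell X \rightarrow R_\ell Y$ is an $\E_\ell$-weak equivalence; but $X \cong R_\ell X$ when $X$ is $\E_\ell$-fibrant, so $Y \xrightarrow{\upsilon_Y} R_\ell Y$ composed with the inverse weak equivalence data shows $\upsilon_Y$ is a weak equivalence between $\E_\ell$-fibrant objects (using that $R_\ell Y$ is always $\E_\ell$-fibrant), hence invertible by Proposition~\ref{prop:5}(v) for $\E_\ell$ — wait, that needs $\E_\ell$-cofibrancy too. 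I'd instead argue: $\upsilon_Y$ is an $\E_\ell$-acyclic cofibration (it is a $\T\C_{\E_\ell}$-map by construction) with $\E_\ell$-fibrant codomain; if its domain $Y$ is also $\E_\ell$-fibrant then $\upsilon_Y$ is split by the lift in the square with $\id_Y$ and, by uniqueness of orthogonal factorisations, is invertible. So it suffices to show $Y$ is $\E_\ell$-fibrant given $X$ is — which is what I'm trying to prove; the honest argument runs the other way, showing $\upsilon_Y$ is invertible by transporting along $f$. I would transport: $R_\ell X \xleftarrow{\sim} X \xrightarrow{f} Y$, and since $R_\ell X \cong X$ is $\E_\ell$-fibrant, the composite weak equivalence $Y \rightarrow R_\ell X$ factors through $\upsilon_Y$ up to the naturality square, forcing $\upsilon_Y$ to be an $\E_\ell$-weak equivalence between $\E_\ell$-fibrant objects hence invertible. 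By symmetry (running $f^{-1}$ in the homotopy category, or just repeating with roles swapped once we know $Y$ is $\E_\ell$-fibrant), the equivalence of conditions holds.

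Finally, condition (iii) — that $R_\ell$ preserves weak equivalences — is where the real content lies, and this is the step I expect to be the main obstacle. Here I would use that $\upsilon \colon 1 \Rightarrow R_\ell$ consists of $\E_\ell$-acyclic cofibrations, which are in particular $\E_\ell$-weak equivalences, hence (since $\W_\E \subseteq \W_{\E_\ell}$ but we need the reverse for genuine weak equivalences of $\E$)... the subtlety is that $\upsilon_X$ need not be an $\E$-weak equivalence. So instead, given an $\E$-weak equivalence $g \colon X \rightarrow Y$, it is also an $\E_\ell$-weak equivalence, so $R_\ell g$ is an $\E_\ell$-weak equivalence between $\E_\ell$-fibrant objects. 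I then need to upgrade this to an $\E$-weak equivalence. Applying the $\E$-fibrant replacement (which does nothing up to the relevant comparison since $\E_\ell$-fibrant objects, lying in $\E_f$ by... no, $\E_\ell$-fibrant objects are not automatically $\E$-fibrant). The clean fix: $R_\ell g$ is an $\E_\ell$-weak equivalence, and by Proposition~\ref{prop:5}(vii) for $\E_\ell$ it is inverted by $Q_\ell R_\ell$; but I want it inverted by $\E$'s weak-equivalence detector. I would instead argue directly that $R_\ell$ lands in $\E_f$: since $\C_{\E_\ell} = \C_\E$, an $\E_\ell$-fibration with $\E_\ell$-fibrant... hmm, $\E_\ell$-fibrant does \emph{not} imply $\E$-fibrant. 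The correct resolution, I believe, is to observe that $R_\ell$ being a reflector onto a reflective subcategory of the locally presentable $\E$ with accessible underlying functor (it is accessible as a left adjoint between locally presentable categories, or by Proposition~\ref{prop:20} applied to $\E_\ell$) lets me compare $R_\ell$ with $R$, and then: given an $\E$-weak equivalence $g$, form $Rg$, an $\E$-weak equivalence between $\E$-fibrant objects hence an $\E$-fibration, hence in particular $\upsilon$ applied to the square for $g$ versus $Rg$ reduces the problem to showing $R_\ell$ preserves $\E$-weak equivalences between $\E$-fibrant objects — but those are $\E$-acyclic fibrations, hence $\E_\ell$-weak equivalences, hence sent by $R_\ell$ to $\E_\ell$-weak equivalences between $\E_\ell$-fibrant objects, which I must finally see are $\E$-weak equivalences. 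And here I close the loop using condition (ii) just proved together with Proposition~\ref{prop:5}(v): if $h$ is an $\E_\ell$-weak equivalence between $\E_\ell$-fibrant objects that are moreover $\E$-fibrant, refine to $\E$-cofibrant-fibrant and conclude $h$ is invertible, hence an $\E$-weak equivalence. I would organise the write-up around exactly this reduction: preservation on $\E$-fibrant objects via fibrant replacement, then the orthogonality of the $\E_\ell$-factorisation system to pin down that the relevant maps become isomorphisms, which is the technical heart.
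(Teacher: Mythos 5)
There is a genuine gap, and it stems from a single false belief that you state twice and that derails both (ii) and (iii): you assert that an $\E_\ell$-fibrant object ``need not be $\E$-fibrant''. This is backwards. Since $\E_\ell$ has the same cofibrations and at least as many acyclic cofibrations, $\F_{\E_\ell} = \mathrm{RLP}(\T\C_{\E_\ell}) \subseteq \mathrm{RLP}(\T\C_{\E}) = \F_\E$, so every $\E_\ell$-fibration is an $\E$-fibration and hence $(\E_\ell)_f \subseteq \E_f$. (You even write $\F_{\E_\ell} \subseteq \F_\E$ and then draw the opposite conclusion from it.) This inclusion is not optional: Definition~\ref{def:1} requires a locality to be a subcategory of $\E_f$, so without it the statement cannot even be set up. The same reasoning gives $\T\F_{\E_\ell} = \mathrm{RLP}(\C_{\E_\ell}) = \mathrm{RLP}(\C_\E) = \T\F_\E$, whence $\E$ and $\E_\ell$ share the same cofibrant replacement $Q$, and $\W_\E \subseteq \W_{\E_\ell}$. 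These two observations are the missing keys.

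With them, (iii) is short and should be proved \emph{before} (ii): an $\E$-weak equivalence $f$ is an $\E_\ell$-weak equivalence, hence by Proposition~\ref{prop:5}(vii) applied to $\E_\ell$ it is inverted by $Q R_\ell$; since $Q$ is also the cofibrant replacement of $\E$ and reflects $\E$-weak equivalences (Proposition~\ref{prop:5}(ii)), $R_\ell f \in \W_\E$. Your sketch of (iii) never closes --- it ends at ``refine to $\E$-cofibrant-fibrant and conclude'' without saying how, precisely because you lack $(\E_\ell)_f \subseteq \E_f$ and the identification of the two cofibrant replacements. For (ii), your transport argument is circular as you yourself note (``which is what I'm trying to prove''); the repair is to use (iii): in the naturality square for $\upsilon$ at a weak equivalence $f \colon X \to Y$ in $\E_f$, both $\upsilon_X$ and $\upsilon_Y$ are $\E$-cofibrations between $\E$-fibrant objects, hence by Proposition~\ref{prop:5}(i) also $\E$-fibrations, hence invertible exactly when they are $\E$-weak equivalences; since $f$ and (by (iii)) $R_\ell f$ lie in $\W_\E$, two-out-of-three makes $\upsilon_X \in \W_\E$ equivalent to $\upsilon_Y \in \W_\E$. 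Your treatment of (i) is essentially the paper's and is fine.
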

\begin{proof}
  $(\E_{\ell})_f \subseteq \E_f$ since $\E_\ell$ has more acyclic
  cofibrations than $\E$; it is of course full, replete and reflective
  in $\E$, and is locally presentable by Proposition~\ref{prop:22}. We
  next verify (iii). If $f$ is an $\E$-weak equivalence, then it is an
  $\E_\ell$-weak equivalence, whence by Proposition~\ref{prop:5}(vii)
  inverted by $QR_\ell$. Since $Q$ is also the cofibrant replacement
  for $\E$, we conclude by Proposition~\ref{prop:5}(ii) that
  $R_\ell(f)$ is an $\E$-weak equivalence. Finally, for (ii), let
  $f \colon X \rightarrow Y$ be a weak equivalence in $\E_f$ and
  consider the square:
  \begin{equation*}
    \cd[@C+0.5em@-0.5em]{
      {X} \ar[r]^-{f} \ar[d]_{\upsilon_{X}} &
      {Y} \ar[d]^{\upsilon_Y} \\
      {R_\ell X} \ar[r]^-{R_\ell f} &
      {R_\ell Y}\rlap{ .}
    }
  \end{equation*}
  We must show $\upsilon_X$ is invertible just when $\upsilon_Y$ is.
  But both are (acyclic) cofibrations in $\E_\ell$, whence
  cofibrations in $\E$, between $\E$-fibrant objects; so they are
  invertible just when they are $\E$-weak equivalences. But 
  $R_\ell$ preserves $\E$-weak equivalences, so both horizontal maps are
  $\E$-weak equivalences, whence $\upsilon_{X}$ is an $\E$-weak
  equivalence if and only if $\upsilon_Y$ is.
\end{proof}

To show conversely that every homotopical locality $\E_{\ell f}$
arises from a localisation, we carry out the procedure outlined in the
introduction: lifting the model structure from $\E$ to $\E_{\ell f}$
and back again, and mixing the result with the original model
structure. Before doing so, we establish some necessary
properties of localities.

\begin{Lemma}
  \label{lem:10}
  If $\E_{\ell f}$ is a locality for $\E$, then:
  \begin{enumerate}[(i)]
  \item $Q$ preserves and reflects locality of fibrant objects;
  \item Each $\upsilon_X \colon X \rightarrow R_\ell X$ is a cofibration;
  \item $R_\ell$ preserves cofibrations.
  \end{enumerate}
\end{Lemma}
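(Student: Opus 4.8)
The plan is to prove the three parts in the order (i), (ii), (iii), with (ii) carrying the real content and (iii) following formally from it.

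For (i), I would first note that $Q$ sends fibrant objects to fibrant objects: if $X$ is fibrant then $QX \xrightarrow{\varepsilon_X} X \to 1$ is a composite of fibrations (recall $\varepsilon_X$ is always an acyclic fibration), so $QX$ is fibrant. Now $\varepsilon_X \colon QX \to X$ is a weak equivalence between the fibrant objects $QX$ and $X$, so by clause~(ii) of Definition~\ref{def:1}, $QX$ is local precisely when $X$ is; this gives both preservation and reflection of locality among fibrant objects.

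For (ii), fix $X \in \E$ and factor the unit $\upsilon_X \colon X \to R_\ell X$ in the orthogonal weak factorisation system $(\C, \T\F)$ as $X \xrightarrow{i} Z \xrightarrow{p} R_\ell X$ with $i \in \C$ and $p \in \T\F$. The target $R_\ell X$ is local, hence fibrant, so $Z$ is fibrant (being the domain of the composite fibration $Z \xrightarrow{p} R_\ell X \to 1$), and $p$ is then a weak equivalence between fibrant objects; so by clause~(ii) of Definition~\ref{def:1}, $Z$ is local as well. Since $\upsilon_X$ is the unit of the reflection onto $\E_{\ell f}$ and $Z$ is local, there is a unique $r \colon R_\ell X \to Z$ with $r \upsilon_X = i$; and since $R_\ell X$ is also local and $p r \upsilon_X = p i = \upsilon_X$, uniqueness of the factorisation of $\upsilon_X$ through itself forces $p r = \id_{R_\ell X}$. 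To see that $r p = \id_Z$ as well, apply the unique lifting property of the cofibration $i$ against the acyclic fibration $p$ to the commuting square whose top and left edges are both $i$ and whose bottom and right edges are both $p$: both $\id_Z$ and $r p$ are diagonal fillers, hence they agree. Thus $p$ is an isomorphism, and $\upsilon_X = p \circ i$ is a composite of cofibrations, hence a cofibration.

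Part (iii) is then immediate: given a cofibration $f \colon A \to B$, naturality of $\upsilon$ gives $R_\ell f \circ \upsilon_A = \upsilon_B \circ f$, and the right-hand side is a composite of cofibrations by (ii), hence a cofibration; since $\upsilon_A$ is a cofibration, the cancellation property Proposition~\ref{prop:17}(iv) for the weak factorisation system $(\C, \T\F)$ then yields that $R_\ell f$ is a cofibration. The main obstacle is part~(ii): the crucial observation is that after factoring $\upsilon_X$ the acyclic-fibration part $p$ automatically lands between \emph{local} objects — which is precisely what closure of locality under weak equivalences between fibrant objects provides — after which one collapses $p$ to an isomorphism using \emph{uniqueness} of lifts (orthogonality, Proposition~\ref{prop:21}) rather than any genuinely homotopical argument.
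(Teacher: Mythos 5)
Your proof is correct and takes essentially the same route as the paper: part (i) via $\varepsilon_X$ and clause (ii) of the locality definition, part (ii) by factoring $\upsilon_X$ as a cofibration followed by an acyclic fibration, noting the interposing object is local, and using the reflection's universal property, and part (iii) via the naturality square and Proposition~\ref{prop:17}(iv). The only (harmless) variation is at the end of (ii): the paper stops at $gh=1$ and exhibits $\upsilon_X$ as a retract of the cofibration part, whereas you go further and use orthogonality to show the acyclic-fibration part is actually an isomorphism.
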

\begin{proof}
  For (i), apply property (ii) of a locality to
  $\varepsilon_X \colon QX \rightarrow X$. For (ii), we factor
  $\upsilon_X$ as a cofibration $f \colon X \rightarrow P$ followed by
  an acyclic fibration $g \colon P \rightarrow R_\ell X$. Now $P$ is
  fibrant since $R_\ell X$ is, and $R_\ell X$ is local; so applying
  property (ii) of a locality to $g$, we conclude that $P$ is local.
  We can thus extend $f \colon X \rightarrow P$ to a map
  $h \colon R_\ell X \rightarrow P$ with $h\upsilon_X = f$; now
  $\upsilon_X = gf = gh\upsilon_X$, whence $gh = 1$, and so
  \begin{equation*}
    \cd{
      X \ar@{=}[r]^-{} \ar[d]_-{\upsilon_X} &
      X \ar@{=}[r]^-{} \ar[d]_-{f} &
      X \ar[d]_-{\upsilon_X} \\
      R_\ell X \ar[r]^-{h} & P \ar[r]^-{g} & R_\ell X
    }
  \end{equation*}
  exhibits $\upsilon_X$ as a retract of the cofibration $f$ and so 
  a cofibration. Finally, for (iii), apply (ii) and Proposition~\ref{prop:17}(iv) to the naturality square for
  $R_\ell$ at a cofibration. 
\end{proof}

We now show that the model structure on $\E$ lifts along the inclusion
$\E_{\ell f} \rightarrow \E$.

\begin{Prop}
  \label{prop:25}
  If $\E_{\ell f}$ is a locality for the one-dimensional
  combinatorial $\E$, then the model structure on $\E$ restricts to
  one on $\E_{\ell f}$, with classes as follows:
  \begin{itemize}
  \item Cofibrations = $\mathrm{LLP}$(maps inverted by $Q$);
  \item Acyclic cofibrations = isomorphisms;
  \item Fibrations = all maps;
  \item Acyclic fibrations = weak equivalences = maps inverted by
    $Q$.
  \end{itemize}
  The restricted model structure is one-dimensional and
  projectively lifts that on $\E$.
\end{Prop}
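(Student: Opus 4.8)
The plan is to exhibit the asserted model structure on $\E_{\ell f}$ as the \emph{projective lifting} of the model structure on $\E$ along the inclusion $i \colon \E_{\ell f} \hookrightarrow \E$. This $i$ is a right adjoint—to the reflector $R_\ell$—between locally presentable categories (here I use that $\E_{\ell f}$ is reflective and, being a locality, locally presentable), so Proposition~\ref{prop:1}(i) applies once we check the acyclicity condition $\mathrm{LLP}(i^{-1}(\F)) \subseteq i^{-1}(\W)$. But every object of $\E_{\ell f}$ is fibrant in $\E$, so by Proposition~\ref{prop:5}(i) \emph{every} map of $\E_{\ell f}$ lies in $i^{-1}(\F)$; hence $\mathrm{LLP}(i^{-1}(\F))$ is $\mathrm{LLP}(\text{all maps})$, which is just the isomorphisms, and these certainly lie in $i^{-1}(\W)$. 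Thus the projective lifting exists and is combinatorial, with fibrations all maps and weak equivalences the $\E$-weak equivalences between local objects. The remaining classes follow formally: acyclic cofibrations are $\mathrm{LLP}(\text{all maps}) = $ isomorphisms; since all maps are fibrations, the acyclic fibrations coincide with the weak equivalences; and the cofibrations are $\mathrm{LLP}(\text{acyclic fibrations})$.

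It then remains to identify the weak equivalences with the maps inverted by $Q$. The key point is that $\eta_X \colon X \rightarrow RX$ is invertible whenever $X$ is fibrant: it is then an acyclic cofibration between fibrant objects, hence also a fibration by Proposition~\ref{prop:5}(i), and a map which is simultaneously an acyclic cofibration and a fibration has the left lifting property against itself, so is an isomorphism. Thus $\eta$ exhibits a natural isomorphism $R \cong 1$ on $\E_f$, whence $QR \cong Q$ there; combining this with Proposition~\ref{prop:5}(vii), a map between local (a fortiori fibrant) objects is an $\E$-weak equivalence exactly when it is inverted by $Q$. This identifies the weak equivalences—and so also the acyclic fibrations—with the $Q$-inverted maps, and the cofibrations with $\mathrm{LLP}(\text{maps inverted by }Q)$, as claimed; that the structure projectively lifts that on $\E$ is now immediate from the construction.

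For one-dimensionality I would invoke Proposition~\ref{prop:21}: it suffices to show both weak factorisation systems of the restricted structure are orthogonal. For $(\T\C, \F) = (\text{iso}, \text{all maps})$ this is trivial, an isomorphism lifting uniquely against anything. For $(\C, \T\F)$ I use the criterion of Proposition~\ref{prop:17}(iii): given $f \colon A \rightarrow B$ in $\T\F$, that is, an $\E$-acyclic fibration between local objects, we must show the diagonal $\Delta \colon A \rightarrow A \times_B A$ again lies in $\T\F$. Since $\E_{\ell f}$ is reflective in $\E$ it is closed under limits, so this pullback and its diagonal are computed as in $\E$, and $A \times_B A$ is again local; as $\E$ is one-dimensional, its weak factorisation system $(\C, \T\F)$ is orthogonal, so Proposition~\ref{prop:17}(iii) makes $\Delta$ an $\E$-acyclic fibration, hence—having local source and target—a member of $\T\F$ of the restricted structure. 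The step needing the most care is the second one: the observation that $\eta_X$ is invertible for fibrant $X$, and the bookkeeping needed to see that limits and diagonals computed in $\E_{\ell f}$ agree with those in $\E$; everything else is a routine unwinding of Proposition~\ref{prop:1}(i).
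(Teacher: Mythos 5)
Your argument is correct, and it reaches the same endpoint as the paper's by a slightly different route. The paper establishes existence directly: it factors a map between local objects using the factorisations of $\E$, observes that the interposing object is again fibrant and (by invariance of locality under weak equivalence between fibrant objects, Lemma~\ref{lem:10}(i)) local, and concludes that the whole model structure --- factorisations included --- restricts to $\E_{\ell f}$; one-dimensionality is then inherited because a projective lifting preserves the cancellation property of Proposition~\ref{prop:17}(v). You instead invoke the general lifting theorem, Proposition~\ref{prop:1}(i), with the pleasantly degenerate acyclicity check $\mathrm{LLP}(\text{all}) = \text{iso}$, and verify one-dimensionality by hand via the diagonal criterion of Proposition~\ref{prop:17}(iii). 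Your identification of the weak equivalences (via $\eta_X$ being invertible on fibrant objects, so $QR \cong Q$ there) is a clean substitute for the paper's appeal to Proposition~\ref{prop:5}(ii), (iii), (v). The paper's route is more elementary in that it never needs the small object argument, and it yields for free the statement that the structure genuinely \emph{restricts}, i.e.\ that the cofibrations of the new structure are exactly the $\E$-cofibrations lying in $\E_{\ell f}$; in your version this last point is left implicit, though it follows by a one-line retract argument (factor such a map in $\E$ as a cofibration followed by an acyclic fibration, note the interposing object is local and the acyclic fibration is inverted by $Q$, and lift). Everything you use --- local presentability and reflectivity of $\E_{\ell f}$, closure under limits, Proposition~\ref{prop:5}(i) --- is available at this point, so the proof stands.
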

\begin{proof}
  On taking either factorisation of a map between objects of
  $\E_{\ell f}$, the interposing object clearly lies in $\E_f$, and is
  moreover local by Lemma~\ref{lem:10}(i). So the model structure
  restricts; in particular, it is a projective lifting along
  $\E_{\ell f} \rightarrow \E$ and so also one-dimensional. Since
  $\E_{\ell f} \subseteq \E_f$, Proposition~\ref{prop:5}(i) gives the
  characterisation of the restricted fibrations and acyclic
  cofibrations. The restricted weak equivalences therefore equal
  the restricted acyclic fibrations; and as every local object
  is fibrant, these are by Proposition~\ref{prop:5}(ii), (iii) and
  (v), exactly the maps inverted by $Q$. 
\end{proof}

And now we lift back in the other direction:

\begin{Prop}
  \label{prop:10}
  Let $\E_{\ell f}$ be a homotopical locality for $\E$. The
  restricted model structure on $\E_{\ell f}$ lifts injectively along
  $R_\ell \colon \E \rightarrow \E_{\ell f}$; this new model structure
  $\E'$ is one-dimensional, with acyclic cofibrations the maps
  inverted by $R_\ell$, and with fibrant objects the local objects.
  The identity functor $\E \rightarrow \E'$ is left Quillen.
\end{Prop}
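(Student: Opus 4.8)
The plan is to obtain $\E'$ by a single application of Proposition~\ref{prop:1}(ii): take the combinatorial model category there to be $\E_{\ell f}$ equipped with the restricted model structure of Proposition~\ref{prop:25} (combinatorial by Proposition~\ref{prop:1}(i), being a projective lifting along the right adjoint $i \colon \E_{\ell f} \hookrightarrow \E$ of the combinatorial $\E$ into the locally presentable $\E_{\ell f}$), take the ambient locally presentable category to be $\E$, and take the left adjoint along which to inject to be the reflector $R_\ell \colon \E \rightarrow \E_{\ell f}$. Writing $\C_{\ell f}$ and $\W_{\ell f}$ for the cofibrations and weak equivalences of the restricted structure, the only point that requires work is the acyclicity condition $\mathrm{RLP}\big(R_\ell^{-1}(\C_{\ell f})\big) \subseteq R_\ell^{-1}(\W_{\ell f})$. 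Being a projective lifting, the restricted structure makes $i$ right Quillen, so its left adjoint $R_\ell$ is left Quillen, and hence $\C_\E \subseteq R_\ell^{-1}(\C_{\ell f})$. Therefore $\mathrm{RLP}\big(R_\ell^{-1}(\C_{\ell f})\big)$ is contained in $\mathrm{RLP}(\C_\E)$, the class of acyclic fibrations of $\E$, which in particular consists of weak equivalences of $\E$. On the other hand $\W_{\ell f}$ is, by Proposition~\ref{prop:25}, the restriction to $\E_{\ell f}$ of the class of $\E$-weak equivalences, so $R_\ell^{-1}(\W_{\ell f})$ is the class of maps $f$ with $R_\ell f$ a weak equivalence of $\E$, and this contains every weak equivalence of $\E$ precisely because $R_\ell$ preserves weak equivalences --- which is the homotopicality axiom, Definition~\ref{def:1}(iii). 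Chaining the two inclusions gives the acyclicity condition, so Proposition~\ref{prop:1}(ii) applies and yields a combinatorial model structure $\E'$ on $\E$ with cofibrations $R_\ell^{-1}(\C_{\ell f})$ and weak equivalences $R_\ell^{-1}(\W_{\ell f})$.

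The remaining assertions are then read off. The acyclic cofibrations of $\E'$ are $R_\ell^{-1}(\C_{\ell f}) \cap R_\ell^{-1}(\W_{\ell f}) = R_\ell^{-1}(\C_{\ell f} \cap \W_{\ell f}) = R_\ell^{-1}(\T\C_{\ell f})$, which, since the acyclic cofibrations of the restricted structure are isomorphisms (Proposition~\ref{prop:25}), is exactly the class of maps inverted by $R_\ell$. That $\E'$ is one-dimensional is dual to the fact, used in the proof of Proposition~\ref{prop:25}, that projective liftings along right adjoints preserve one-dimensionality: since $R_\ell$ carries pushouts in $\E$ to pushouts in $\E_{\ell f}$, both $R_\ell^{-1}(\C_{\ell f})$ and the class of maps inverted by $R_\ell$ inherit closure under codiagonals from, respectively, $\C_{\ell f}$ and the isomorphisms, so one concludes by Propositions~\ref{prop:21} and~\ref{prop:17}(ii).

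For the fibrant objects, one-dimensionality makes the fibrations of $\E'$ precisely the maps with the right lifting property against the acyclic cofibrations, i.e.\ against the maps inverted by $R_\ell$; so $X$ is $\E'$-fibrant exactly when $X \rightarrow 1$ has this lifting property. Each unit component $\upsilon_Y \colon Y \rightarrow R_\ell Y$ is inverted by $R_\ell$, so an $\E'$-fibrant $X$ carries a map $r \colon R_\ell X \rightarrow X$ with $r\upsilon_X = \id_X$; the universal property of the reflection then forces $\upsilon_X r = \id_{R_\ell X}$ as well, whence $\upsilon_X$ is invertible and $X$ is local (a locality being replete by Definition~\ref{def:1}(ii)). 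Conversely, a routine calculation with the reflection shows that for every $X \in \E_{\ell f}$ the map $X \rightarrow 1$ has the right lifting property against every map inverted by $R_\ell$, so every local object is $\E'$-fibrant. Finally, the identity functor $\E \rightarrow \E'$ is left Quillen: it preserves cofibrations because $\C_\E \subseteq R_\ell^{-1}(\C_{\ell f})$, and it preserves acyclic cofibrations because $R_\ell$, being left Quillen for the restricted structure, sends acyclic cofibrations of $\E$ to acyclic cofibrations of $\E_{\ell f}$, which are isomorphisms. I expect the acyclicity verification of the first paragraph to be the only genuine obstacle; the remainder is just a matter of identifying which maps and objects the lifted structure produces, and it is in the acyclicity step that the two standing hypotheses --- that $R_\ell$ is left Quillen for the restricted structure, and that the locality is homotopical --- are actually used.
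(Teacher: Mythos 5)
Your proof is correct and follows essentially the same route as the paper: the same application of Proposition~\ref{prop:1}(ii), with the identical chain $\mathrm{RLP}(R_\ell^{-1}(\C)) \subseteq \mathrm{RLP}(\C) = \T\F \subseteq \W \subseteq R_\ell^{-1}(\W)$ for acyclicity and the same identifications of the acyclic cofibrations and fibrant objects afterwards. The only (harmless) variation is that you obtain $\C \subseteq R_\ell^{-1}(\C)$ by observing that $R_\ell$ is left Quillen for the projectively lifted structure, where the paper instead cites Lemma~\ref{lem:10}(iii).
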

\begin{proof}
  By assumption, $\E_{\ell f}$ is locally presentable and $R_\ell$ is
  a left adjoint. Now as $R_\ell$ preserves weak equivalences by
  assumption and cofibrations by Lemma~\ref{lem:10}(iii), we have
  $\W \subseteq R_\ell^{-1}(\W)$ and $\C \subseteq R_\ell^{-1}(\C)$, whence
  $\mathrm{RLP}(R_\ell^{-1}(\C)) \subseteq \mathrm{RLP}(\C) = \T\F
  \subseteq \W \subseteq R_\ell^{-1}(\W)$. Thus by
  Proposition~\ref{prop:1}, the injectively lifted model structure
  $\E'$ exists,
  and is one-dimensional by Proposition~\ref{prop:17}(iv); since $\E'$
  has more cofibrations and acyclic cofibrations than $\E$, the
  identity $\E \rightarrow \E'$ is left Quillen. The characterisation
  of the acyclic cofibrations follows since the $\E_{\ell f}$-acyclic
  cofibrations are the isomorphisms. Finally, an object is
  $\E'$-fibrant if and only if it is orthogonal to all maps inverted
  by the reflector $R_\ell \colon \E \rightarrow \E_{\ell f}$, if and
  only if it is in $\E_{\ell f}$.
\end{proof}

Finally, we mix this new model structure with our original one:

\begin{Prop}
  \label{prop:12}
  If $\E_{\ell f}$ is a homotopical locality for $\E$, then there
  exists a combinatorial localisation $\E_\ell$ of $\E$ such that
  $(\E_\ell)_f = \E_{\ell f}$.
\end{Prop}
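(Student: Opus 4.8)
The plan is to assemble the three preceding propositions and then identify the fibrant objects of the outcome. By Proposition~\ref{prop:10}, the homotopical locality $\E_{\ell f}$ gives rise to a one-dimensional combinatorial model structure $\E'$ on the underlying category of $\E$, whose fibrant objects are exactly the local ones and for which the identity $\E \rightarrow \E'$ is left Quillen; in particular $\C_\E \subseteq \C_{\E'}$ and $\W_\E \subseteq \W_{\E'}$. Feeding the pair $\E$, $\E'$ into Proposition~\ref{prop:19} then produces a combinatorial model structure $\E_\ell$ on the same underlying category with $\C_{\E_\ell} = \C_\E$ and $\W_{\E_\ell} = \W_{\E'}$.

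First I would check that $\E_\ell$ is a combinatorial localisation of $\E$ in the sense of Definition~\ref{def:7}. It has the same cofibrations, and since $\W_\E \subseteq \W_{\E'}$ its acyclic cofibrations $\C_\E \cap \W_{\E'}$ contain $\C_\E \cap \W_\E = \T\C_\E$. One-dimensionality is the one point needing care: by Proposition~\ref{prop:21} it suffices to see that both weak factorisation systems of $\E_\ell$ are orthogonal. The system with left class $\C_\E$ is determined by that left class, so it coincides with the orthogonal system $(\C_\E, \T\F_\E)$ of $\E$; for the other, given $f$ in $\T\C_{\E_\ell} = \C_\E \cap \W_{\E'}$ we have $f \in \C_{\E'} \cap \W_{\E'} = \T\C_{\E'}$, so by orthogonality of $(\T\C_{\E'}, \F_{\E'})$ and of $(\C_\E, \T\F_\E)$ the codiagonal of $f$ lies in $\T\C_{\E'} \cap \C_\E = \T\C_{\E_\ell}$, which is condition (ii) of Proposition~\ref{prop:17}.

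It remains to prove $(\E_\ell)_f = \E_{\ell f}$. For the inclusion $\supseteq$, note that $\C_{\E_\ell} = \C_\E \subseteq \C_{\E'}$ and $\T\C_{\E_\ell} = \C_\E \cap \W_{\E'} \subseteq \T\C_{\E'}$, so the identity $\E_\ell \rightarrow \E'$ is left Quillen; its right adjoint $\E' \rightarrow \E_\ell$ then carries fibrant objects to fibrant objects, and the $\E'$-fibrant objects are the local ones by Proposition~\ref{prop:10}. For $\subseteq$, let $X \in (\E_\ell)_f$. The identity $\E \rightarrow \E_\ell$ is left Quillen, so the identity $\E_\ell \rightarrow \E$ is right Quillen and $X$ is $\E$-fibrant; by Lemma~\ref{lem:10}(ii) the unit $\upsilon_X \colon X \rightarrow R_\ell X$ is an $\E$-cofibration, hence an $\E_\ell$-cofibration, and it is an $\E_\ell$-weak equivalence because $R_\ell(\upsilon_X)$ is invertible (a standard property of reflective subcategories) and $\W_{\E_\ell} = \W_{\E'} = R_\ell^{-1}(\W_{\E_{\ell f}})$. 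Thus $\upsilon_X$ is an $\E_\ell$-acyclic cofibration, and solving the lifting problem it poses against the $\E_\ell$-fibration $X \rightarrow 1$ yields $r \colon R_\ell X \rightarrow X$ with $r\upsilon_X = \id_X$. Since $R_\ell X$ is local, precomposition with $\upsilon_X$ is injective on maps out of $R_\ell X$, so from $(\upsilon_X r)\upsilon_X = \upsilon_X = \id_{R_\ell X}\,\upsilon_X$ we get $\upsilon_X r = \id_{R_\ell X}$; hence $\upsilon_X$ is invertible and $X \cong R_\ell X$ is local.

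I expect the last inclusion $(\E_\ell)_f \subseteq \E_{\ell f}$, together with the one-dimensionality check, to be the main obstacles: the rest is bookkeeping with Quillen adjunctions, whereas here one must upgrade mere $\E_\ell$-fibrancy of $X$ to invertibility of $\upsilon_X$, and this is the sole place the interaction between the mixed weak equivalences and the reflector $R_\ell$ is genuinely used.
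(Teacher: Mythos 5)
Your proof is correct and follows essentially the same route as the paper's: form $\E'$ via Proposition~\ref{prop:10}, mix with $\E$ via Proposition~\ref{prop:19}, deduce $\E_{\ell f} \subseteq (\E_\ell)_f$ from $\T\C_{\E_\ell} \subseteq \T\C_{\E'}$, and prove the reverse inclusion by the same retraction argument on $\upsilon_X$ using the universal property of the reflection. The one point you treat more carefully than the paper is the explicit verification that the mixed structure is one-dimensional (which Definition~\ref{def:7} requires but the paper's proof leaves implicit); that check is correct.
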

\begin{proof}
  The model structure $\E'$ of the last result has more cofibrations
  and weak equivalences than $\E$; so we can mix with $\E$ to obtain a
  model structure $\E_\ell$ with the cofibrations of $\E$ and the weak
  equivalences of $\E'$. It remains to show
  $(\E_\ell)_f = \E_{\ell f}$. As the $\E'$-fibrant objects are the
  local ones, and as $\T\C_\ell \subseteq \T\C'$, each local object is
  $\E_\ell$-fibrant. Conversely, if $X$ is $\E_\ell$-fibrant, then
  since $\upsilon_X \colon X \rightarrow R_\ell X$ is in
  $\T\C_\ell$---being both an $\E$-cofibration and inverted by
  $R_\ell$---the identity $X \rightarrow X$ extends to a retraction
  $p \colon RX \rightarrow X$ for $\upsilon_X$. Now since
  $\upsilon_X p \upsilon_X = \upsilon_X$ also $\upsilon_X p = 1$ and
  so $X$ is local as an isomorph in $\E_f$ of the local $RX$.
\end{proof}

Putting together the preceding results, we obtain:

\begin{Thm}
  \label{thm:1}
  Let $\E$ be a one-dimensional combinatorial model category. The
  assignation $\E_\ell \mapsto (\E_\ell)_f$ yields an order-reversing
  bijection between combinatorial localisations of $\E$ (ordered by
  inclusion of their acyclic cofibrations) and homotopical localities
  for $\E$ (ordered by inclusion of their subcategories of local
  objects).
\end{Thm}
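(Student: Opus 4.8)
The plan is to assemble Theorem~\ref{thm:1} from the two directions already in hand. Proposition~\ref{prop:24} provides the map from combinatorial localisations to homotopical localities, sending $\E_\ell$ to $(\E_\ell)_f$; Proposition~\ref{prop:12} provides a map the other way, sending a homotopical locality $\E_{\ell f}$ to a localisation $\E_\ell$ with $(\E_\ell)_f = \E_{\ell f}$. So the composite ``locality $\to$ localisation $\to$ locality'' is already the identity, by the final clause of Proposition~\ref{prop:12}. What remains is to check three things: that the other composite is the identity, that both assignments are order-reversing, and hence that they are mutually inverse bijections.

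For the other composite, I would start from a combinatorial localisation $\E_\ell$, form the homotopical locality $(\E_\ell)_f$, and then apply the construction of Proposition~\ref{prop:12} to it to get some localisation $\E_\ell'$; the task is to show $\E_\ell' = \E_\ell$. Since a one-dimensional model structure is determined by its cofibrations and its fibrant objects (factorisations being unique, cofibrations and fibrant objects pin down the fibrations via Proposition~\ref{prop:17}, and then weak equivalences via the lifting characterisation), and since both $\E_\ell$ and $\E_\ell'$ have the cofibrations of $\E$ and — by Proposition~\ref{prop:12}'s conclusion applied to the locality $(\E_\ell)_f$ — the same fibrant objects, namely $(\E_\ell)_f$, they must coincide. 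I would state this ``determined by cofibrations and fibrant objects'' principle explicitly, perhaps as a short preliminary remark, since it is the linchpin; it follows from Proposition~\ref{prop:5}(i) (maps between fibrant objects are fibrations, so fibrations into a fibrant object are detected, and more carefully the acyclic cofibrations are the cofibrations inverted by fibrant replacement, which is determined by the fibrant objects) together with Proposition~\ref{prop:5}(vii).

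For order-reversal: if $\E_\ell \subseteq \E_{\ell'}$ in the sense that $\T\C_\ell \subseteq \T\C_{\ell'}$, I must show $(\E_{\ell'})_f \subseteq (\E_\ell)_f$; but an object is $\E_\ell$-fibrant iff it lifts against all $\E_\ell$-acyclic cofibrations, and having more acyclic cofibrations in $\E_{\ell'}$ is a stronger condition, so $(\E_{\ell'})_f \subseteq (\E_\ell)_f$ — immediate. Conversely, given localities $\E_{\ell f} \subseteq \E_{\ell' f}$ (more local objects in the second), one checks that the associated acyclic cofibrations increase: the acyclic cofibrations of the localisation built in Proposition~\ref{prop:12} are the $\E$-cofibrations inverted by the reflector $R_\ell$, and an $\E$-cofibration inverted by $R_\ell$ (reflection into the smaller $\E_{\ell f}$) is also inverted by $R_{\ell'}$ — this needs the compatibility of the two reflectors, which holds because $\E_{\ell f} \subseteq \E_{\ell' f}$ forces $R_{\ell'} \cong R_{\ell'} R_\ell$ on the nose up to the canonical comparison, so inversion by $R_\ell$ implies inversion by $R_{\ell'}$. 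I expect this last point — making the reflector-compatibility argument clean enough to conclude $\T\C_\ell \subseteq \T\C_{\ell'}$ — to be the one spot needing genuine care; everything else is bookkeeping on top of Propositions~\ref{prop:24} and~\ref{prop:12}.
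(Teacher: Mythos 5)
Your overall architecture coincides with the paper's: well-definedness from Proposition~\ref{prop:24}, surjectivity from Proposition~\ref{prop:12}, and injectivity from the observation that two localisations sharing the cofibrations of $\E$ and the same fibrant objects have the same cofibrant replacement $Q$ (since $\T\F_\ell = \mathrm{RLP}(\C)$ is shared) and isomorphic fibrant replacements $R_\ell$ (the reflector into the common subcategory of fibrant objects), hence by Proposition~\ref{prop:5}(vii) the same weak equivalences. That is exactly the paper's injectivity argument; your first parenthetical route (``cofibrations and fibrant objects pin down the fibrations via Proposition~\ref{prop:17}'') is vaguer than it should be, but the second route via weak equivalences is the one that works and is the one the paper uses.

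The one step that fails as written is the order-reversal check for the inverse assignment. From $\E_{\ell f} \subseteq \E_{\ell' f}$ you assert $R_{\ell'} \cong R_{\ell'} R_\ell$ and conclude $\T\C_\ell \subseteq \T\C_{\ell'}$; both are backwards. When $\E_{\ell f}$ is the smaller reflective subcategory, the correct identity is $R_\ell \cong R_\ell R_{\ell'}$ (reflect into the larger subcategory first, then into the smaller); the composite you wrote instead satisfies $R_{\ell'} R_\ell \cong R_\ell$, since $R_\ell X$ already lies in $\E_{\ell' f}$, so the isomorphism $R_{\ell'} \cong R_{\ell'}R_\ell$ is simply false. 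From the correct identity one gets that every map inverted by $R_{\ell'}$ is inverted by $R_\ell$, hence $\T\C_{\ell'} \subseteq \T\C_\ell$: fewer local objects means \emph{more} acyclic cofibrations, which is the containment that order-reversal of the inverse actually requires. As written, your argument would make the inverse map order-\emph{preserving}, contradicting the (correct and easy) order-reversal you established for the forward map. The fix is a one-line reversal, and you are right that this check is needed at all --- an order-reversing bijection of posets need not have an order-reversing inverse, a point the paper's own proof glosses over with ``clearly order-reversing'' --- but the step as stated must be rewritten rather than merely re-read.
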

\begin{proof}
  The assignation is well-defined by Proposition~\ref{prop:24},
  clearly order-reversing, and surjective by
  Proposition~\ref{prop:12}. Moreover, two localisations of $\E$ which
  induce the same localities must have isomorphic cofibrant--fibrant
  replacement functors, whence by Proposition~\ref{prop:5}(vii) the same
  weak equivalences; and so must coincide.
\end{proof}

\section{Left  properness}
\label{sec:left-right-prop}

Localisation of model structures is often carried out under the
assumption of left properness; recall that a model structure is called
\emph{left proper} if the pushout of a weak equivalence along a
cofibration is a weak equivalence. We now explain the significance of
this condition in the one-dimensional context, by proving:

\begin{Prop}
  \label{prop:26}
  If $\E$ is one-dimensional, combinatorial and left proper, then any
  locality for $\E$ is homotopical.
\end{Prop}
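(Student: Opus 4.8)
The plan is to deduce property (iii) of a homotopical locality --- that $R_\ell$ preserves weak equivalences --- from left properness, directly, for an arbitrary weak equivalence $f \colon X \to Y$ in $\E$ (no fibrancy or cofibrancy of $X,Y$ is needed). First I would form the pushout $P := R_\ell X +_X Y$ of $f$ along the reflection unit $\upsilon_X \colon X \to R_\ell X$, which is a cofibration by Lemma~\ref{lem:10}(ii); write $g \colon R_\ell X \to P$ and $\iota \colon Y \to P$ for the coprojections, so $g\upsilon_X = \iota f$. Left properness --- the one place the hypothesis is used --- makes $g$ a weak equivalence, being the pushout of the weak equivalence $f$ along the cofibration $\upsilon_X$. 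Choosing a fibrant replacement $\eta_P \colon P \to RP$, the composite $\eta_P g \colon R_\ell X \to RP$ is then a single weak equivalence between fibrant objects, so property (ii) of a locality, applied to the local object $R_\ell X$, shows that $RP$ is local.

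Next I would invoke the universal property of the reflection twice. The pushout square, together with the naturality identity $\upsilon_Y f = R_\ell f\,\upsilon_X$, produces a unique $h \colon P \to R_\ell Y$ with $h\iota = \upsilon_Y$ and $hg = R_\ell f$; since $R_\ell Y$ is fibrant and $\eta_P$ an acyclic cofibration, $h$ lifts (uniquely, by one-dimensionality) to $\bar h \colon RP \to R_\ell Y$ with $\bar h\eta_P = h$. Conversely, as $RP$ is local, the map $\eta_P\iota \colon Y \to RP$ factors uniquely through $\upsilon_Y$, giving $\theta \colon R_\ell Y \to RP$ with $\theta\upsilon_Y = \eta_P\iota$. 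The goal is to see that $\theta$ and $\bar h$ are mutually inverse isomorphisms, for then $R_\ell f = \bar h\eta_P g$ is a composite of weak equivalences.

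One composite is easy: $\bar h\theta$ and $\mathrm{id}_{R_\ell Y}$ agree after precomposition with $\upsilon_Y$, hence are equal by the universal property of $\upsilon_Y$. For the other, I would first check $\theta\, R_\ell f = \eta_P g$ (both sides are maps $R_\ell X \to RP$ with $RP$ local, and both restrict along $\upsilon_X$ to $\eta_P\iota f = \eta_P g\,\upsilon_X$), then deduce $\theta h = \eta_P$ by comparing the two coprojections of the pushout $P$ (on $\iota$ both give $\eta_P\iota$, on $g$ both give $\eta_P g$), and hence $\theta\bar h\,\eta_P = \theta h = \eta_P$. At this last step one-dimensionality enters again: $\theta\bar h$ and $\mathrm{id}_{RP}$ are then two solutions to the lifting problem with the acyclic cofibration $\eta_P$ on the left and the fibration $RP \to 1$ on the right, so they coincide by uniqueness of lifts (Proposition~\ref{prop:21}). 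Thus $\theta\bar h = \mathrm{id}_{RP}$, and $R_\ell f = \bar h\eta_P g$ is a weak equivalence.

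The genuinely substantive ingredient is left properness, used exactly once to promote $g$ to a weak equivalence; the rest is a diagram chase governed by the universal properties of $\upsilon$ and of the pushout, plus the uniqueness of lifts. I expect the only delicate bookkeeping to be keeping the identifications $\theta\,R_\ell f = \eta_P g$ and $\theta h = \eta_P$ straight, and making sure property (ii) of a locality is applied to the honest single weak equivalence $\eta_P g$ between the fibrant objects $R_\ell X$ and $RP$.
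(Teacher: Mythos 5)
Your argument is correct, and its engine is the same as the paper's: push $f$ out along the cofibration $\upsilon_X$ (Lemma~\ref{lem:10}(ii)) and invoke left properness exactly once to make the resulting map out of $R_\ell X$ a weak equivalence. Where you diverge is in the packaging. The paper first reduces to a weak equivalence between \emph{fibrant} objects by applying $R$ (using that $\eta_X$, $\eta_Y$ are inverted by $R_\ell$), forms the pushout in $\E_f$ --- which requires a separate lemma (Lemma~\ref{lem:8}) showing that left properness descends to $\E_f$ --- and then identifies the pushout with $R_\ell Y$ in one stroke via a universal-property argument (Lemma~\ref{lem:7}): the coprojection $\iota$, being a pushout of $\upsilon_X$, inherits the unique left lifting property against local objects and is therefore itself a reflection of $Y$ into $\E_{\ell f}$. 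You instead stay in $\E$ with an arbitrary weak equivalence, which spares you both the reduction step and Lemma~\ref{lem:8}, at the cost of having to pass through a fibrant replacement $RP$ (since the pushout in $\E$ need not be fibrant) and of verifying $RP \cong R_\ell Y$ by an explicit construction of mutually inverse maps $\bar h$ and $\theta$. That verification is correct as you lay it out, but you could shorten it to match the paper's style: $\iota$ is orthogonal to every local object (being a pushout of $\upsilon_X$), as is the acyclic cofibration $\eta_P$, so $\eta_P\iota \colon Y \to RP$ is itself a reflection of $Y$ into $\E_{\ell f}$ and the comparison with $\upsilon_Y$ is automatically invertible. Either way, both routes use left properness in the same single place, and yours is a legitimate, slightly more self-contained alternative.
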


Before proving this, we establish some preparatory lemmas. In stating
the first, note that, as a special case of Proposition~\ref{prop:25},
any one-dimensional model structure on a category $\E$ restricts to a
one-dimensional model structure on $\E_f$.

\begin{Lemma}
  \label{lem:8}
  A one-dimensional model category $\E$ is left proper if and only if
  the restricted model structure on $\E_f$ is left proper.
\end{Lemma}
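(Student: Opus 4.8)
The forward direction is immediate: if $\E$ is left proper, then since $\E_f$ is a full reflective subcategory of $\E$ and the restricted model structure is a projective lifting (Proposition \ref{prop:25} applied with $\E_{\ell f} = \E_f$), a pushout square in $\E_f$ of an acyclic cofibration along a cofibration can be computed in $\E$ — cofibrations and acyclic cofibrations of the restricted structure are precisely those $\E$-cofibrations and $\E$-acyclic cofibrations lying in $\E_f$, and the reflector preserves colimits so the pushout in $\E_f$ is the reflection of the pushout in $\E$, but both maps being cofibrations the relevant pushout already lies in $\E_f$ up to the natural iso. Actually the cleanest phrasing: in a one-dimensional model category, left properness is automatic for pushouts of acyclic cofibrations along cofibrations *between fibrant objects as long as the pushout stays fibrant*, so one just notes the pushout of a cofibration along anything, when the sources are fibrant, need not be fibrant — hence one must genuinely use the reflection $R$. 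So the plan is to observe that the pushout $B \cup_A C$ in $\E_f$ is $R(B \cup_A C)$ where the inner pushout is taken in $\E$; then the comparison map from the $\E$-pushout to its reflection is an acyclic cofibration by Lemma \ref{lem:10}(ii) (or Proposition \ref{prop:5}(iii)), and left properness of $\E$ gives that $\E$-pushout leg is a weak equivalence, whence so is its composite with the reflection unit, which is the $\E_f$-pushout leg.

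For the converse — the substantive direction — suppose the restricted model structure on $\E_f$ is left proper, and take a pushout square in $\E$
\begin{equation*}
  \cd{
    {A} \ar[r]^-{g} \ar[d]_-{i} & {C} \ar[d]^-{j} \\
    {B} \ar[r] & {B \cup_A C}
  }
\end{equation*}
with $i$ a cofibration and $g$ a weak equivalence; we must show the bottom map is a weak equivalence. The strategy is to reduce to the fibrant case by applying $R$. Apply the fibrant replacement functor $R$ throughout: by Proposition \ref{prop:5}(iii), $R$ preserves cofibrations, so $Ri$ is a cofibration in $\E_f$; by Proposition \ref{prop:5}(ii), $Rg$ is a weak equivalence. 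The issue is that $R$ does not preserve pushouts, so $R(B \cup_A C)$ is not the pushout $RB \cup_{RA} RC$. To bridge this, form the $\E_f$-pushout $P := RB \cup_{RA} RC$ (which, as above, is $R(RB \cup_{RA} RC)$ of the $\E$-pushout); left properness of $\E_f$ tells us $RC \to P$ is a weak equivalence, hence $RA \to RB \to P$ is a weak equivalence. There is a canonical comparison map $R(B\cup_A C) \to P$ — or rather, using that $R$ is a reflector, a map $RB \cup_{RA} RC \to R(B \cup_A C)$ induced by the units, which after applying $R$ and using $RR \cong R$ gives $P \cong R(RB \cup_{RA}RC) \to R(B\cup_A C)$; I would instead go the other way, building a map $P \to R(B \cup_A C)$ from the cocone $RB \to R(B\cup_A C) \leftarrow RC$. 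The key claim is that this comparison map is a weak equivalence, from which a two-out-of-three argument (using that $RA \to RB \to P$ is a weak equivalence, that $g$ and hence $Rg$ is, and Proposition \ref{prop:5}(ii) to descend from $R$ of the bottom map to the bottom map itself) finishes it.

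The main obstacle is exactly establishing that comparison weak equivalence, i.e., controlling the failure of $R$ to commute with this pushout. I expect the right tool is Proposition \ref{prop:5}(vii): a map is a weak equivalence iff inverted by $QR \cong RQ$, combined with the fact (Proposition \ref{prop:5}(iii)–(iv), (vi)) that $Q$ and $R$ interact well, so that it suffices to check the comparison becomes an isomorphism after $QR$; and after $QR$ everything is cofibrant–fibrant, where weak equivalences are isomorphisms (Proposition \ref{prop:5}(v)) and where one has better colimit control. Alternatively — and this may be the slicker route — one can avoid the comparison map entirely: factor $i$ as an acyclic cofibration followed by a cofibration is not available in general, but one *can* use that $R$ applied to the original pushout square, while not a pushout, still receives a map from the $\E_f$-pushout, and that the original square's bottom map composed with its unit $\upsilon_{B \cup_A C}$ factors through $P$; then left properness in $\E_f$ handles $P$ and Lemma \ref{lem:10}(ii) / Proposition \ref{prop:5}(iii) handles the units, leaving only two-out-of-three. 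I would write the converse this way, keeping the comparison-map verification to the minimum of checking it is an iso on $QR$, which reduces to a colimit computation in $\E_{cf}$ where weak equivalences are isomorphisms.
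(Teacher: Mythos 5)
Your forward direction ($\E$ left proper $\Rightarrow$ $\E_f$ left proper) is correct and, once the hedging is stripped away, is exactly the paper's argument: the pushout in $\E_f$ is the reflection of the pushout in $\E$, the $\E$-pushout leg is a weak equivalence by left properness of $\E$, and composing with the unit (an acyclic cofibration) keeps it one.

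The converse direction, however, has a genuine gap, and it stems from a false premise. You assert that ``$R$ does not preserve pushouts, so $R(B\cup_A C)$ is not the pushout $RB\cup_{RA}RC$.'' That is true of $R$ regarded as an endofunctor of $\E$ with pushouts computed in $\E$, but the relevant functor is the reflector $R\colon\E\to\E_f$, which is a \emph{left adjoint} to the inclusion (Proposition~\ref{prop:22}) and therefore preserves all colimits, where colimits in $\E_f$ are computed by reflecting colimits from $\E$. Hence $R(B\cup_A C)$ \emph{is} the pushout of $RB\leftarrow RA\rightarrow RC$ in $\E_f$, your comparison map is an isomorphism, and the ``key claim'' you flag as the main obstacle — and never actually prove — is precisely where all the content of your argument sits. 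The paper's proof of this direction is the observation you missed: apply $R$ to the given pushout square; since $R\colon\E\to\E_f$ preserves pushouts, weak equivalences and cofibrations, the image is a pushout of a weak equivalence along a cofibration in $\E_f$; left properness of $\E_f$ makes the pushed-out map a weak equivalence; and $R$ reflects weak equivalences, so the original map is one too. Your proposed fallback — verifying the comparison after applying $QR$ via ``a colimit computation in $\E_{cf}$'' — is not carried out and runs into the same difficulty in disguise, since $Q\colon\E_f\to\E_{cf}$ is a coreflector (a right adjoint) and so has no a priori reason to preserve pushouts. As written, the converse direction is incomplete.
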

\begin{proof}
  $R \colon \E \rightarrow \E_f$ preserves pushouts, weak equivalences
  and cofibrations; so if $\E_f$ is left proper, then the cobase
  change in $\E$ of a weak equivalence along a cofibration is sent by
  $R$ to another such cobase change in $\E_f$ and so, by left
  properness, to a weak equivalence. As $R$ reflects weak
  equivalences, this shows $\E$ is left proper.

  Conversely, if $\E$ is left proper, then the cobase change in $\E_f$
  of a weak equivalence along a cofibration may be calculated by
  forming the cobase change in $\E$---which yields a weak equivalence by
  left properness---and then applying $R$---which yields a weak
  equivalence since $R$ preserves such. This shows $\E_f$ is left proper.
\end{proof}
\begin{Rk}
  \label{rk:5}
  It follows that \emph{any localisation of the left proper
    one-dimensional $\E$ is left proper}. For indeed, by copying the
  ``only if'' direction of the preceding proof, we see that the
  restriction of the model structure $\E$ to $(\E_{\ell})_f$ is left
  proper. But this model structure is equally the restriction of the
  model structure $\E_\ell$ to $(\E_\ell)_f$ and so by the ``if''
  direction of the preceding result, $\E_\ell$ is also left proper.
\end{Rk}

\begin{Lemma}
  \label{lem:7}
  If $\E_{\ell f}$ is a locality for the left proper $\E$, then
  for any weak equivalence $f \colon X \rightarrow Y$ between fibrant
  objects, the following is a pushout in $\E_f$:
  \begin{equation}\label{eq:7}
    \cd{
      {X} \ar[r]^-{f} \ar[d]_{\upsilon_X} &
      {Y} \ar[d]^{\upsilon_Y} \\
      {R_\ell X} \ar[r]^-{R_\ell f } &
      {R_\ell Y}
    }
  \end{equation}
\end{Lemma}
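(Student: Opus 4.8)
The plan is to realise the square \eqref{eq:7} as (an isomorphic copy of) a pushout square in $\E_f$ obtained by cobase change. I would work inside the restricted one-dimensional model structure on $\E_f$, which is left proper by Lemma~\ref{lem:8}; in it, $\upsilon_X$ is a cofibration (it is an $\E$-cofibration by Lemma~\ref{lem:10}(ii), and such maps between fibrant objects are cofibrations in $\E_f$ as well, since every weak equivalence there is an $\E$-acyclic fibration by Proposition~\ref{prop:5}(i)), while $f$ is a weak equivalence. First I would form the pushout $P$ of $f$ along $\upsilon_X$ in $\E_f$, with coprojections $j_1 \colon R_\ell X \to P$ and $j_2 \colon Y \to P$. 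By left properness $j_1$ is a weak equivalence, being the cobase change of $f$ along the cofibration $\upsilon_X$; and since $R_\ell X$ is local and $P \in \E_f$ is weakly equivalent to it, property~(ii) of a locality forces $P$ itself to be local. Showing that this cobase change lands back inside the locality is the crux of the argument, and the step I expect to require the most care.

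Once $P$ is known to be local, the rest is reflective-subcategory bookkeeping. By naturality of $\upsilon$ we have $R_\ell f \circ \upsilon_X = \upsilon_Y \circ f$, so the universal property of $P$ yields $\phi \colon P \to R_\ell Y$ with $\phi j_1 = R_\ell f$ and $\phi j_2 = \upsilon_Y$; conversely, since $P$ is local, $j_2 \colon Y \to P$ factors uniquely through the reflection unit $\upsilon_Y$, say $j_2 = \psi \upsilon_Y$. Then $\phi\psi\upsilon_Y = \phi j_2 = \upsilon_Y$ forces $\phi\psi = \id_{R_\ell Y}$ by uniqueness of factorisations through $\upsilon_Y$. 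To obtain $\psi\phi = \id_P$ it suffices, by the universal property of $P$, to check agreement with the identity after precomposing with $j_1$ and $j_2$: precomposition with $j_2$ gives $\psi\phi j_2 = \psi\upsilon_Y = j_2$; and $\psi\phi j_1 = \psi R_\ell f$ has precomposite $\psi\upsilon_Y f = j_2 f = j_1 \upsilon_X$ with $\upsilon_X$ by the pushout square, whence $\psi R_\ell f = j_1$ because a map $R_\ell X \to P$ into the local object $P$ is determined by its restriction along $\upsilon_X$.

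Finally, $\phi$ is an isomorphism with $\phi j_1 = R_\ell f$ and $\phi j_2 = \upsilon_Y$; transporting the pushout cocone $(j_1,j_2)$ along $\phi$ therefore exhibits $(R_\ell f, \upsilon_Y)$ as a pushout cocone for $f$ and $\upsilon_X$, i.e.\ \eqref{eq:7} is a pushout in $\E_f$.
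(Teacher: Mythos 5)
Your proposal is correct and follows essentially the same route as the paper: form the pushout of $f$ along $\upsilon_X$ in the left proper $\E_f$, use left properness plus property~(ii) of a locality to see that $P$ is local, and then identify $P$ with $R_\ell Y$. The only cosmetic difference is in the endgame --- the paper observes that the pushout $j_2 \colon Y \to P$ of $\upsilon_X$ inherits the left lifting property against local objects and is therefore itself a reflection of $Y$ into $\E_{\ell f}$, whereas you construct the two mutually inverse comparison maps by hand; both are valid.
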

\begin{proof}
  Let us form the pushout
  \begin{equation*}
    \cd{
      {X} \ar[r]^-{f} \ar[d]_{\upsilon_X} &
      {Y} \ar[d]^{p} \\
      {R_\ell X} \ar[r]^-{q} &
      {P} \pushoutcorner
    }
  \end{equation*}
  in the left proper $\E_f$. Since $f$ is a weak equivalence and
  $\upsilon_X$ is a cofibration, $q$ is also a weak equivalence in
  $\E_f$; thus since $R_\ell X$ is local, $P$ is too. Moreover, since
  $\upsilon_X$ has the left lifting property against any local object,
  so does its pushout $p$; whence $p \colon Y \rightarrow P$ is a
  reflection of $Y$ into $\E_{\ell f}$. As
  $\upsilon_Y \colon Y \rightarrow R_\ell Y$ is another such
  reflection, the unique induced map $P \rightarrow R_\ell Y$ is thus
  invertible.
\end{proof}

We can now give:
\begin{proof}[Proof of Proposition~\ref{prop:26}]
  Let $\E_{\ell f}$ be a locality. If $g \colon X \rightarrow Y$ is a
  weak equivalence in $\E$, then $Rg$ is one in $\E_f$, and so taking
  $f = Rg$ in~\eqref{eq:7} shows that $R_\ell Rg$, as a pushout of a
  weak equivalence along a cofibration in the left proper $\E_f$, is
  also a weak equivalence. Now $\eta_X \colon X \rightarrow RX$ and
  $\eta_Y \colon X \rightarrow RY$ have the unique left lifting
  property against every (fibrant and so every) local object, and as
  such are inverted by $R_\ell$; whence 
  \begin{equation*}
    R_\ell g = R_\ell X \xrightarrow{R_\ell \eta_X} R_\ell RX
    \xrightarrow{R_\ell Rg} R_\ell RY \xrightarrow{(R_\ell \eta_Y)^{-1}}
    R_\ell Y
  \end{equation*}
  is a composite of weak equivalences and so a weak equivalence.
\end{proof}

We have thus shown that, in the left proper context, we can drop the
modifier ``homotopical'' from the statement of Theorem~\ref{thm:1}:
that is, localisations of the left proper $\E$ correspond to
localities on $\E$. The value of this is that localities are rather
easy to construct, by virtue of:

\begin{Prop}
  \label{prop:14}
  Let $\E$ be a combinatorial one-dimensional model category. The
  assignation $\E_{\ell f} \mapsto \E_{\ell f} \cap \E_c$ yields an
  order-preserving bijection between localities for $\E$ and full,
  replete, reflective, locally presentable subcategories of $\E_{cf}$
  (where in each case the order is given by inclusion of
  subcategories).
\end{Prop}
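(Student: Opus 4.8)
The plan is to produce an explicit inverse to $\E_{\ell f}\mapsto\E_{\ell f}\cap\E_c$, namely the assignation $\Phi\colon\A\mapsto\{X\in\E_f : QX\in\A\}$, and to check that each assignation lands in the claimed class of subcategories, that the two compose to identities, and that both preserve inclusions. Two elementary consequences of one-dimensionality will be used throughout. \emph{First}: for cofibrant $X$ the counit $\varepsilon_X\colon QX\to X$ is invertible, since $0\to QX\to X$ and Proposition~\ref{prop:17}(iv) place $\varepsilon_X$ in $\C$, while $\C\cap\T\F$ consists of isomorphisms. \emph{Second}: for any cofibrant $B$ and any $X$, postcomposition with $\varepsilon_X$ is a bijection $\E(B,QX)\to\E(B,X)$, because $\varepsilon_X\in\T\F$ lifts uniquely against the cofibration $0\to B$ (Proposition~\ref{prop:21}). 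I shall also use that $\E_{cf}$ is locally presentable: it is reflective in the locally presentable $\E_c$ via $R$ (which preserves cofibrancy by Proposition~\ref{prop:5}(iii)), and $R$ is accessible by Proposition~\ref{prop:20}, so Proposition~\ref{prop:28} applies.

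First I would verify that, for a locality $\E_{\ell f}$, the subcategory $\A\defeq\E_{\ell f}\cap\E_c$ is full, replete, reflective and locally presentable in $\E_{cf}$. Fullness is clear, and repleteness follows from axiom~(ii) of a locality. For reflectivity, given $X\in\E_{cf}$ the unit $\upsilon_X\colon X\to R_\ell X$ works: $R_\ell X$ is local, hence fibrant, and cofibrant because $\upsilon_X$ is a cofibration (Lemma~\ref{lem:10}(ii)) and Proposition~\ref{prop:17}(iv), applied to $0\to X\to R_\ell X$, makes $0\to R_\ell X$ a cofibration; thus $R_\ell X\in\A$, and its universal property among objects of $\A$ is inherited from that of $R_\ell$ as a reflector into $\E_{\ell f}$. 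Local presentability then follows from Proposition~\ref{prop:28}, once one checks that $R_\ell$ restricts to an accessible endofunctor of $\E_{cf}$; this is routine, given the accessibility of $R_\ell\colon\E\to\E$ (Proposition~\ref{prop:28} again, since $\E_{\ell f}$ is locally presentable and reflective in $\E$) together with the closure of $\E_{cf}$ under sufficiently filtered colimits in $\E$.

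Conversely I would show $\Phi(\A)$ is a locality. Axiom~(ii) is immediate: weakly equivalent $X,Y\in\E_f$ have weakly equivalent $QX,QY$, which are cofibrant--fibrant and hence isomorphic (a standard consequence of Proposition~\ref{prop:5}), so $QX\in\A$ iff $QY\in\A$. The substantial point is axiom~(i): that $\Phi(\A)$ is reflective and locally presentable in $\E$, and the idea is to descend an orthogonality presentation of $\A$ along $Q$. Being a reflective, locally presentable, full subcategory of the locally presentable $\E_{cf}$, $\A$ is the orthogonality class $\Sigma^{\perp}$ of a \emph{set} $\Sigma$ of morphisms of $\E_{cf}$ (by the theory of orthogonal reflective subcategories of locally presentable categories; cf.~\cite{Adamek1994Locally}). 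For $X\in\E_f$ and $s\colon A\to B$ in $\Sigma$, the objects $A,B$ are cofibrant, so the second elementary fact above identifies the function $\E(s,QX)$ with $\E(s,X)$; hence $QX\perp s$ iff $X\perp s$, and therefore $\Phi(\A)=\{X\in\E_f : X\perp\Sigma\}$ is the orthogonality class in $\E_f$ of the \emph{set} $\Sigma$ of maps of $\E_f$ (recall $\E_{cf}\subseteq\E_f$). The orthogonal-reflection theorem for locally presentable categories then makes $\Phi(\A)$ reflective and locally presentable in $\E_f$, and composing with the reflector $R\colon\E\to\E_f$ of Proposition~\ref{prop:22} makes it reflective in $\E$.

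It remains to see the two assignations are mutually inverse, which is now short. Starting from $\A$: $\Phi(\A)\cap\E_c=\{X\in\E_{cf} : QX\in\A\}$, and $QX\cong X$ for $X\in\E_{cf}$ by the first elementary fact, so repleteness of $\A$ gives $\Phi(\A)\cap\E_c=\A$. Starting from a locality $\E_{\ell f}$: since $QX$ is always cofibrant, $\Phi(\E_{\ell f}\cap\E_c)=\{X\in\E_f : QX\in\E_{\ell f}\}$, which equals $\E_{\ell f}$ because $Q$ preserves and reflects locality of fibrant objects (Lemma~\ref{lem:10}(i)). Both assignations visibly preserve inclusions, so the bijection is an isomorphism of posets. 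The main obstacle is exactly the reflectivity and local presentability of $\Phi(\A)$ in $\E$; everything else is formal. The key point making it work is the one-dimensional phenomenon, captured in the second elementary fact, that orthogonality against a morphism whose domain and codomain are cofibrant is unaffected by passing from a fibrant object to its cofibrant replacement; this transports the known presentation of $\A$ over $\E_{cf}$ to a presentation of $\Phi(\A)$ over $\E_f$.
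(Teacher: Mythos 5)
Your proof is correct. The forward direction (reflectivity of $\E_{\ell f}\cap\E_c$ in $\E_{cf}$ via $\upsilon$, local presentability via accessibility of $R_\ell$), the injectivity argument through Lemma~\ref{lem:10}(i), and the identification of the inverse assignation as $\A\mapsto\{X\in\E_f : QX\in\A\}$ all match the paper. Where you genuinely diverge is in the surjectivity step, i.e.\ in proving axiom~(i) of a locality for $\{X\in\E_f:QX\in\A\}$: the paper realises this subcategory as the pullback of $Q\colon\E_f\to\E_{cf}$ along $\A\hookrightarrow\E_{cf}$ and invokes Bird's theorem on limits of locally presentable categories to obtain local presentability and the right-adjoint inclusion in one stroke, whereas you present $\A$ as a small-orthogonality class $\Sigma^{\perp}$ in $\E_{cf}$ (via \cite[Theorem~1.39]{Adamek1994Locally}, which the paper itself only deploys later, in Theorem~\ref{thm:5}) and transport that presentation along $Q$ using the unique lifting of $\T\F$-maps against cofibrations $0\to B$; the orthogonal-subcategory theorem then yields reflectivity and local presentability in $\E_f$ directly. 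The two routes buy the same conclusion; yours trades a $2$-categorical limit theorem for the orthogonality machinery and has the merit of making the connection with Section~\ref{sec:relat-enrich-bousf} visible already at this stage. One point you label ``routine'' deserves a sentence: the accessibility of $R_\ell$ on $\E_{cf}$ via closure of $\E_{cf}$ under sufficiently filtered colimits \emph{in $\E$} does hold, but only because one-dimensionality makes cofibrancy an orthogonality condition (so $\E_c$ is closed under all colimits in $\E$) while $\E_f$ is closed under $\kappa$-filtered ones; the paper avoids this by arguing instead through the coreflectivity, hence colimit-closure, of $\E_{cf}$ in $\E_f$.
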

\begin{proof}
  If $\E_{\ell f}$ is a locality, then by Lemma~\ref{lem:10}(iii) its
  reflector $R_\ell$ maps $\E_{cf}$ into $\E_{\ell f} \cap \E_c$, so
  that $\E_{\ell f} \cap \E_c$ is reflective in $\E_{cf}$ via
  $R_\ell$. Moreover, since $\E_{\ell f}$ is reflective in $\E_f$ via
  $R_\ell$, the functor $R_\ell \colon \E_f \rightarrow \E_f$ is
  accessible by Proposition~\ref{prop:28}. Since $\E_{cf}$ is
  coreflective in $\E_f$ and hence closed under colimits,
  $R_\ell \colon \E_{cf} \rightarrow \E_{cf}$ is also accessible and
  hence $\E_{\ell f} \cap \E_c$ is locally presentable by
  Proposition~\ref{prop:28} again.

  This shows that $\E_{\ell f} \mapsto \E_{\ell f} \cap \E_c$ is
  well-defined, and it is injective since for any locality, the
  objects in $\E_{\ell f}$ are, by Lemma~\ref{lem:10}(i), those
  $X \in \E_f$ for which $QX \in \E_{\ell f} \cap \E_c$. To show
  surjectivity, let $\E_{\ell cf} \subseteq \E_{cf}$ be reflective and
  locally presentable. Form the pullback
  \begin{equation*}
    \cd{
      {\E_{\ell f}} \pullbackcorner  \ar[r]_-{i'} \ar[d]_{Q'} &
      {\E_f} \ar[d]^{Q} \\
      {\E_{\ell cf}} \ar[r]^-{i} &
      {\E_{cf} \rlap{ ;}}
    }
  \end{equation*}
  so objects of $\E_{\ell f}$ are objects $X \in \E_f$ with
  $QX \in \E_{\ell cf}$. Since $QX \cong X$ whenever $X \in \E_c$, we
  have $\E_{\ell f} \cap \E_c = \E_{\ell cf}$, and so will be done so
  long as $\E_{\ell f}$ is a locality.

  To check condition (i) of Definition~\ref{def:1}, note that
  $\E_{\ell cf}$, $\E_{cf}$ and $\E_f$ are locally presentable, $i$
  and $Q$ are right adjoints, and $i$ is an isofibration; so
  by~\cite[Theorem~2.18]{Bird1984Limits}, $\E_{\ell f}$ is also
  locally presentable, and $i'$ is also a right adjoint. So
  $\E_{\ell f}$ is reflective in $\E_f$, and $\E_f$ is reflective in
  $\E$, whence $\E_{\ell f}$ is reflective in $\E$. To check
  Definition~\ref{def:1}(ii), note that if $f \colon X \rightarrow Y$
  is a weak equivalence in $\E_f$, then $Qf \colon QX \rightarrow QY$
  is invertible in $\E_{cf}$; since $\E_{\ell cf}$ is replete in
  $\E_{cf}$, we thus have that $X \in \E_{\ell f}$ iff
  $QX \in \E_{\ell cf}$ iff $QY \in \E_{\ell cf}$ iff
  $Y \in \E_{\ell f}$.
\end{proof}

Combining this with Theorem~\ref{thm:1} and Proposition~\ref{prop:26},
we therefore obtain: \firstmaintheorem

\section{Localisation at a set of maps}
\label{sec:relat-enrich-bousf}

In practice, one often constructs localisations of a left proper model
category starting from a set of maps which one wishes to make into
weak equivalences. We now use the theory of the preceding section to
reproduce this construction in the one-dimensional context. First we
recall the basic definitions:
\begin{Defn}
  \label{def:3}
  If $\E$ is a model category enriched over the monoidal model
  category $\V$, then the \emph{derived hom} of $\E$ is the functor
  \begin{equation*}
    \E_h \colon \E^\mathrm{op} \times \E \xrightarrow{\E(Q\thg, R\thg)} \V
    \xrightarrow{\mathrm{Ho}} \mathrm{Ho}\ \V\rlap{ .}
  \end{equation*}
\end{Defn}

For enrichment over the model structure for $0$-types on
$\cat{Set}$, the functor
$\mathrm{Ho} \colon \cat{Set} \rightarrow \mathrm{Ho}\ \cat{Set}$ is
the identity, so that for a one-dimensional model category $\E$ we
have $\E_h(A,B) = \E(QA,RB)$. 

\begin{Defn}
  \label{def:4}
  If $\E$ is a model $\V$-category, $X \in \E$ and $f \in \E(A,B)$,
  then we write $f \ho X$ if
  $\E_h(f, X) \colon \E_h(B, X) \rightarrow \E_h(A,X)$ is invertible.
  Given a class of maps $S$ in $\E$, we now say that:
  \begin{itemize}
  \item An object $X \in \E$ is \emph{$S$-local} if 
    $f \ho X$ for all $f \in S$;
  \item A map $f \in \E$ is an \emph{$S$-local equivalence} if $f \ho
    X$ for all $S$-local $X \in \E$.
  \end{itemize}
\end{Defn}
\begin{Rk}
  \label{rk:4}
  The derived hom $\E_h$ has the property of sending weak equivalences
  in each variable to isomorphisms; in particular, we have
  \begin{equation*}
    \E_h(A,B) \cong \E_h(A,QB) \cong \E_h(A,RB) \ \ \text{and} \ \ 
    \E_h(A,B) \cong \E_h(RA,B) \cong \E_h(QA,B)\rlap{ .}
  \end{equation*}
  It follows that $Q$ and $R$ preserve and reflect both $S$-local
  objects and $S$-local equivalences. As a consequence, in showing
  that a map $f$ is an $S$-local equivalence, it suffices to check
  that $f \ho X$ for each \emph{cofibrant--fibrant} $S$-local $X$.
\end{Rk}

\begin{Rk}
  \label{rk:3}
  If $\E_\ell$ is any combinatorial localisation of $\E$ whatsoever,
  then each $\E$-weak equivalence is an $\E_\ell$-weak equivalence. In
  particular, both $\eta \colon 1 \rightarrow R$ and
  $\varepsilon \colon Q \rightarrow 1$ are pointwise $\E_\ell$-weak
  equivalences so that, by the two-out-of-three property for
  $\E_\ell$-weak equivalences, \emph{$Q$ and $R$ preserve and reflect
    $\E_\ell$-equivalences}.
\end{Rk}

\begin{Thm}
  \label{thm:5}
  Let $\E$ be combinatorial, left proper and one-dimensional. For any
  set of maps $S$ of $\E$, there exists a combinatorial
  localisation $\E_\ell$ of the model structure $\E$
  for which:
  \begin{itemize}
  \item The fibrant objects are the $S$-local $\E$-fibrant objects;
  \item The weak equivalences are the $S$-local equivalences.
  \end{itemize}
  Moreover, every combinatorial localisation of $\E$
  arises thus.
\end{Thm}
\begin{proof}
  Given a set $S$ of maps, let $\E_{\ell cf} \subseteq \E_{cf}$ be the
  full subcategory of $S$-local fibrant--cofibrant objects. Note that
  $X \in \E_{cf}$ is in $\E_{\ell cf}$ just when $\E_{cf}(QRf, X)$ is
  invertible for each $f \in S$. Thus, on taking
  $S' = \{QRf : f \in S\}$ and
  \begin{equation*}
    I = J = S' \cup 
    \{\nabla_{g} \colon B +_{A} B \rightarrow B \mid g \colon A
    \rightarrow B \in S'\}
  \end{equation*}
  as generating (acyclic) cofibrations, the small object argument
  yields a combinatorial one-dimensional model structure on $\E_{cf}$
  with subcategory of fibrant objects $\E_{\ell cf}$. So by
  Proposition~\ref{prop:22}, $\E_{\ell cf}$ is reflective in $\E_{cf}$
  and locally presentable.

  Now, applying Theorem~\ref{thm:3} to $\E_{\ell cf}$ yields a
  localisation $\E_\ell$ of $\E$ with $(\E_\ell)_{cf} = \E_{\ell cf}$.
  As $Q$ preserves and reflects both the $\E_\ell$-fibrancy and
  the $S$-locality of $\E$-fibrant objects, the $\E_\ell$-fibrant objects
  are the $S$-local $\E$-fibrant ones. Moreover, the $\E_\ell$-weak
  equivalences in $\E_{cf}$ are the maps inverted by the reflector
  into $\E_{\ell cf}$, which are those $f$ such that
  $\E_{cf}(f, X) \cong \E_h(f,X)$ is invertible for all
  $X \in \E_{\ell cf}$---which, by Remark~\ref{rk:4}, are exactly the
  $S$-local equivalences in $\E_{cf}$. Since, by Remark~\ref{rk:4} and
  Remark~\ref{rk:3}, $QR$ preserves and reflects both $S$-local
  equivalences and $\E_\ell$-weak equivalences, it follows that the
  $\E_\ell$-weak equivalences are the $S$-local equivalences.
  
  Finally, if $\E_\ell$ is any localisation of $\E$, then by
  Theorem~\ref{thm:3}, $(\E_\ell)_{cf}$ is locally presentable and
  reflective in $\E_{cf}$. Thus,
  by~\cite[Theorem~1.39]{Adamek1994Locally}, there is a set $S$ of
  maps in $\E_{cf}$ so that $(\E_\ell)_{cf}$ comprises those
  $X \in \E_{cf}$ for which $\E_{cf}(\thg, X) \cong \E_h(\thg, X)$
  inverts each $g \in S$---in other words, the $S$-local
  cofibrant--fibrant objects. As a model structure is determined by
  its cofibrations and cofibrant--fibrant objects, $\E_\ell$ is thus
  the localisation of $\E$ at~$S$.
\end{proof}

\section{Colocalisation}
\label{sec:colocalisation}
As noted in the introduction, an advantage of our approach is that
everything we have done adapts without fuss from the case of
localisations to \emph{colocalisations}.

\begin{Defn}
\label{def:10}
A \emph{combinatorial colocalisation} of a combinatorial
one-dimensional model category $\E$ is a combinatorial one-dimensional
model category $\E_{r}$ with the same underlying category, the same
fibrations, and at least as many acyclic fibrations.
\end{Defn}

The arguments of Section~\ref{sec:local-coloc-one} dualise immediately
to show that colocalisations correspond to homotopical
\emph{colocalities}:

\begin{Defn}
  \label{def:2}
  A \emph{colocality} for a combinatorial one-dimensional model
  category $\E$ is a full subcategory $\E_{r c} \subseteq \E_c$, whose
  objects we call \emph{colocal}, such that:
  \begin{enumerate}[(i)]
  \item $\E_{r c}$ is locally presentable and coreflective in $\E$
    via a coreflector $\xi \colon Q_r \rightarrow 1$;
  \item If $X, Y \in \E_c$ are weakly equivalent, then $X$ is colocal
    just when $Y$ is.
  \end{enumerate}
  We call a colocality \emph{homotopical}
  if, in addition:
  \begin{enumerate}[(i)]
    \addtocounter{enumi}{2}
  \item $Q_r$  preserves weak
  equivalences.
  \end{enumerate}
\end{Defn}
\begin{Thm}
  \label{thm:2}
  Let $\E$ be a one-dimensional combinatorial model category. The
  assignation $\E_r \mapsto (\E_r)_c$ yields an order-reversing
  bijection between combinatorial colocalisations of $\E$ (ordered by
  inclusion of their fibrations) and homotopical colocalities for $\E$
  (ordered by inclusion of their subcategories of colocal objects).
\end{Thm}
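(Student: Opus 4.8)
Everything in Section~\ref{sec:local-coloc-one} was built from the two halves of Proposition~\ref{prop:1}, from the mixing Propositions~\ref{prop:11} and~\ref{prop:19}, and from Proposition~\ref{prop:5}, each of which is self-dual in the sense that it has an equivalent formulation with cofibrations and fibrations, limits and colimits, $Q$ and $R$ interchanged. So the clean way to prove Theorem~\ref{thm:2} is to apply Theorem~\ref{thm:1} not to $\E$ directly but to the opposite model category $\E^\op$. First I would observe that $\E^\op$ carries the model structure $(\F^\op, \W^\op, \C^\op)$, that $\E$ is one-dimensional precisely when $\E^\op$ is (by Proposition~\ref{prop:21}, since orthogonality of a weak factorisation system $(\L,\R)$ is the same as orthogonality of $(\R^\op, \L^\op)$), and that $\E$ is combinatorial precisely when $\E^\op$ is (local presentability is not self-dual in general, but the classes are cofibrantly generated on one side iff they are on the other follows from the fact that $\E^\op$ is again locally presentable whenever $\E$ is combinatorial---this is standard, and in any case is implicitly used already in the proofs above via the symmetric treatment of injective and projective liftings).

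**Then the translation is purely formal.** Under the passage to $\E^\op$: a combinatorial colocalisation $\E_r$ of $\E$---same fibrations, more acyclic fibrations---becomes exactly a combinatorial localisation $(\E_r)^\op$ of $\E^\op$; the subcategory $(\E_r)_c$ of $\E_r$-cofibrant objects becomes the subcategory $((\E_r)^\op)_f$ of fibrant objects; and a homotopical colocality $\E_{rc} \subseteq \E_c$ for $\E$ (Definition~\ref{def:2}) becomes precisely a homotopical locality $\E_{rc}^\op \subseteq (\E^\op)_f$ for $\E^\op$ in the sense of Definition~\ref{def:1}---coreflectivity of $\E_{rc}$ in $\E$ via $\xi \colon Q_r \Rightarrow 1$ dualises to reflectivity of $\E_{rc}^\op$ in $\E^\op$ via $\xi^\op \colon 1 \Rightarrow Q_r^\op$; local presentability is preserved since $\E_{rc}$ is complete and cocomplete and the passage $\A \mapsto \A^\op$ preserves accessibility; condition~(ii) is visibly self-dual once one replaces $\E_c$ by $(\E^\op)_f$; and condition~(iii), that $Q_r$ preserves weak equivalences, dualises to $Q_r^\op$ preserving weak equivalences in $\E^\op$. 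The order by inclusion of colocal objects matches the order by inclusion of local objects, and "order-reversing'' is stable under dualising the target poset, so it stays order-reversing. Theorem~\ref{thm:1} applied to $\E^\op$ therefore immediately yields Theorem~\ref{thm:2}.

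**The main obstacle is not mathematical but expository:** one has to be confident that every ingredient used in Sections~\ref{sec:model-categ-backgr}--\ref{sec:local-coloc-one} really does dualise, and in particular that $\E^\op$ is again a combinatorial model category when $\E$ is. The one genuinely nontrivial input is that the underlying category of a combinatorial model category is, together with its opposite, amenable to the two lifting constructions---but this is already exactly the content of the two symmetric halves of Proposition~\ref{prop:1}, so no new work is required. An alternative to the "apply everything to $\E^\op$'' packaging would be to literally rewrite Propositions~\ref{prop:24}, \ref{prop:25}, \ref{prop:10}, \ref{prop:12} with the roles of cofibrant/fibrant, $Q$/$R$, injective/projective, and Propositions~\ref{prop:11}/\ref{prop:19} interchanged throughout, and then recombine them as in the proof of Theorem~\ref{thm:1}; but this duplicates several pages of argument verbatim, so I would present only the $\E^\op$ route, remarking that the reader may instead carry out the word-for-word dualisation if preferred.
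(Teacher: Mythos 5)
Your reduction to Theorem~\ref{thm:1} applied to $\E^\op$ founders on a false claim: the opposite of a locally presentable category is essentially never locally presentable. By Gabriel--Ulmer duality, a category such that both it and its opposite are locally presentable is a preorder (see, e.g., \cite[Theorem~1.64]{Adamek1994Locally}); for instance $\Set^\op$ is not locally presentable. So when $\E$ is combinatorial, $\E^\op$ is \emph{not} combinatorial (neither is local presentability inherited, nor is cofibrant generation, which dualises to the much rarer property of fibrant generation), and Theorem~\ref{thm:1} simply does not apply to it. The same problem infects your claim that $\E_{rc}^\op$ is locally presentable: Definition~\ref{def:2} demands that $\E_{rc}$ itself, sitting inside the locally presentable $\E$, be locally presentable, and this is not equivalent to local presentability of its opposite. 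This asymmetry is exactly why the paper flags that the two halves of Proposition~\ref{prop:1} are ``sharply different from each other'': projective lifting rests on the small object argument, while injective lifting rests on the quite separate theorem of Makkai--Rosick\'y; they are formal duals only at the level of which classes are prescribed, not at the level of the hypotheses under which they hold.

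The correct dualisation --- and what the paper intends by ``the arguments of Section~\ref{sec:local-coloc-one} dualise immediately'' --- is your own ``alternative packaging'': rerun Propositions~\ref{prop:24}, \ref{prop:25}, \ref{prop:10} and \ref{prop:12} with cofibrant/fibrant, $Q$/$R$, and the two halves of Proposition~\ref{prop:1} interchanged, but \emph{without ever leaving the locally presentable world}. Concretely: the model structure restricts to the coreflective $\E_{rc}$ by an \emph{injective} lifting along the (left adjoint) inclusion $\E_{rc}\hookrightarrow\E$; one then lifts back \emph{projectively} along the (right adjoint) coreflector $Q_r\colon\E\to\E_{rc}$, using the dual acyclicity check and the fact that $Q_r$ preserves weak equivalences and fibrations; and finally one mixes using Proposition~\ref{prop:11} in place of Proposition~\ref{prop:19}. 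Every category appearing ($\E$, $\E_{rc}$) is locally presentable by hypothesis, so both lifting results genuinely apply. This is routine but is real work in the sense that each step must be checked against the correct half of Proposition~\ref{prop:1}; it cannot be outsourced to $\E^\op$.
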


Now the arguments of Section~\ref{sec:left-right-prop} dualise to show
that \emph{every colocality for the right proper one-dimensional
  combinatorial $\E$ is homotopical}. The analogue of
Proposition~\ref{prop:14}, however, requires a proof which is not
exactly dual, and which we therefore give in more detail:
\begin{Prop}
  \label{prop:27}
  Let $\E$ be a combinatorial one-dimensional model category. The
  assignation $\E_{r c} \mapsto \E_{r c} \cap \E_f$ yields an
  order-preserving bijection between colocalities for $\E$ and full,
  replete, coreflective, locally presentable subcategories of
  $\E_{cf}$ (where in each case the order is given by inclusion of
  subcategories).
\end{Prop}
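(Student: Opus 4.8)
The plan is to proceed analogously to Proposition~\ref{prop:14}, but with the duality only carried through on the ``coreflective'' axioms, keeping the ambient passage to $\E_{cf}$ as a reflective-type statement where needed. First I would establish the well-definedness of $\E_{rc} \mapsto \E_{rc} \cap \E_f$. Given a colocality $\E_{rc} \subseteq \E_c$ with coreflector $\xi \colon Q_r \Rightarrow 1$, the dual of Lemma~\ref{lem:10}(iii) shows $Q_r$ preserves fibrations, so $Q_r$ maps $\E_{cf}$ into $\E_{rc} \cap \E_f$; hence $\E_{rc} \cap \E_f$ is coreflective in $\E_{cf}$ via $Q_r$. For local presentability: since $\E_{rc}$ is coreflective in $\E_c$, the dual of Proposition~\ref{prop:28} gives that $Q_r \colon \E_c \to \E_c$ is accessible; and because $\E_{cf}$ is reflective in $\E_c$ (Proposition~\ref{prop:22}) it is closed under limits, so the restriction $Q_r \colon \E_{cf} \to \E_{cf}$ is still accessible, whence $\E_{rc} \cap \E_f$ is locally presentable by Proposition~\ref{prop:28}. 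It is clearly full, replete, and order-preserving.

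Next, injectivity: the dual of Lemma~\ref{lem:10}(i) states that $R$ preserves and reflects colocality of cofibrant objects, so for any colocality the objects of $\E_{rc}$ are exactly those $X \in \E_c$ with $RX \in \E_{rc} \cap \E_f$; thus $\E_{rc}$ is recovered from $\E_{rc} \cap \E_f$, giving injectivity. For surjectivity, let $\E_{rcf} \subseteq \E_{cf}$ be full, replete, coreflective and locally presentable, and form the pullback
\begin{equation*}
  \cd{
    {\E_{rc}} \pullbackcorner \ar[r]^-{j'} \ar[d]_{R'} &
    {\E_c} \ar[d]^{R} \\
    {\E_{rcf}} \ar[r]^-{j} &
    {\E_{cf}\rlap{ ,}}
  }
\end{equation*}
so that $\E_{rc}$ consists of those $X \in \E_c$ with $RX \in \E_{rcf}$. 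Since $RX \cong X$ whenever $X \in \E_f$, we get $\E_{rc} \cap \E_f = \E_{rcf}$, and it remains only to check that $\E_{rc}$ is a colocality. For axiom~(i): $\E_{rcf}$, $\E_{cf}$, and $\E_c$ are locally presentable, $j$ and $R$ are right adjoints, and $j$ is an isofibration (being fully faithful and replete), so by~\cite[Theorem~2.18]{Bird1984Limits} $\E_{rc}$ is locally presentable and $j'$ is a right adjoint; but we need $\E_{rc}$ to be \emph{coreflective} in $\E_c$, and this is where the argument departs from the dual of Proposition~\ref{prop:14}---I would instead invoke the coreflectivity of $\E_{rcf}$ in $\E_{cf}$ together with the (co)reflectivity of $\E_c$ and $\E_{cf}$ in $\E$ to produce the coreflector onto $\E_{rc}$ as a composite, using Proposition~\ref{prop:28} to confirm the requisite accessibility. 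For axiom~(ii): if $f \colon X \to Y$ is a weak equivalence in $\E_c$, then $Rf$ is invertible in $\E_{cf}$, so since $\E_{rcf}$ is replete, $X \in \E_{rc}$ iff $RX \in \E_{rcf}$ iff $RY \in \E_{rcf}$ iff $Y \in \E_{rc}$.

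The main obstacle I anticipate is precisely the coreflectivity of the pulled-back subcategory $\E_{rc}$ in $\E_c$: the cited limit-theoretic result~\cite[Theorem~2.18]{Bird1984Limits} naturally produces a \emph{right} adjoint $j'$ to the inclusion of a pullback of right adjoints, which is the wrong handedness---it gives reflectivity, not coreflectivity. The resolution is to observe that the pullback presentation can be read two ways: $\E_{rc}$ is also the category of $X \in \E_c$ together with the (automatic) datum exhibiting $RX$ as an object of the coreflective $\E_{rcf}$, and one can build the coreflector $Q_r$ directly as $X \mapsto$ (the coreflection of $RX$ into $\E_{rcf}$, pulled back along $\xi^R_X$ or transported via the reflection $\E_c \to \E_f$), checking it lands in $\E_{rc}$ and is accessible. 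This is the step requiring genuine care rather than formal dualisation, and it is why the proposition, unlike Theorem~\ref{thm:2}, warrants a spelled-out proof.
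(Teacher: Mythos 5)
Your overall strategy matches the paper's: well-definedness via accessibility of $Q_r$, injectivity via $R$, and surjectivity via the pullback of $R \colon \E_c \rightarrow \E_{cf}$ against the inclusion $j \colon \E_{rcf} \rightarrow \E_{cf}$. You have also correctly located the one step that resists formal dualisation, namely that \cite[Theorem~2.18]{Bird1984Limits} produces a \emph{right} adjoint to $j'$, which is the wrong handedness. But your proposed repair does not work as sketched, and this is a genuine gap. The ``composite'' idea fails because the chain of subcategories has mixed variance: $\E_{rcf}$ is coreflective in $\E_{cf}$, but $\E_{cf}$ is \emph{reflective} (not coreflective) in $\E_c$, so composing the two adjunctions does not yield a coreflection onto $\E_{rc}$. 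Your fallback --- defining $Q_r X$ by taking the coreflection $c(RX) \rightarrow RX$ into $\E_{rcf}$ and ``pulling back along'' the unit $X \rightarrow RX$ --- is not an argument: the pullback $X \times_{RX} c(RX)$ need not be cofibrant, need not have fibrant replacement in $\E_{rcf}$, and has no evident universal property among maps into $X$ from objects of $\E_{rc}$. The paper's resolution is to observe that both legs of the pullback square are \emph{left} adjoints between locally presentable categories ($j$ because $\E_{rcf}$ is coreflective, $R$ because it is a reflector), and to invoke Theorem~3.15 of \cite{Bird1984Limits} --- the left-adjoint analogue of Theorem~2.18 --- which gives directly that $\E_{rc}$ is locally presentable and $j'$ is a left adjoint, i.e.\ that $\E_{rc}$ is coreflective in $\E_c$, and hence in $\E$ by composing with the coreflection $Q$.

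A smaller point: in the well-definedness half you justify the accessibility of $Q_r \colon \E_{cf} \rightarrow \E_{cf}$ by saying $\E_{cf}$ is reflective in $\E_c$ and hence ``closed under limits''. Closure under limits is irrelevant here; what you need is that $\E_{cf}$ is closed in $\E_c$ under $\kappa$-filtered \emph{colimits} for some sufficiently large $\kappa$ (which holds because it is a locally presentable reflective, hence accessibly embedded, subcategory, as in Proposition~\ref{prop:28}), so that $\kappa$-filtered colimits in $\E_{cf}$ are computed as in $\E_c$ and are therefore preserved by the restriction of $Q_r$.
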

\begin{proof}
  The coreflectivity of $\E_{r c} \cap \E_f$ in $\E_{cf}$ is dual
  to before. For its local presentability, as $\E_{r c}$ is
  coreflective in $\E_c$ via $Q_r$, Proposition~\ref{prop:28}
  implies that $Q_r \colon \E_c \rightarrow \E_c$ preserves
  $\lambda$-filtered colimits for some $\lambda$; and as $\E_{cf}$ is
  reflective in $\E_c$, it is by Proposition~\ref{prop:28} closed in
  $\E_c$ under $\kappa$-filtered colimits for some
  $\kappa \geqslant \lambda$. So
  $Q_r \colon \E_{cf} \rightarrow \E_{cf}$ preserves
  $\kappa$-filtered colimits, and so $\E_{r c} \cap \E_f$ is
  locally presentable by Proposition~\ref{prop:28}. Thus
  $\E_{r c} \mapsto \E_{r c} \cap \E_f$ is well-defined, and it
  is injective as before; for surjectivity, given
  $\E_{r cf} \subseteq \E_{cf}$ coreflective and locally
  presentable, we form the pullback
  \begin{equation*}
    \cd{
      {\E_{r c}} \pullbackcorner  \ar[r]_-{j'} \ar[d]_{R'} &
      {\E_c} \ar[d]^{R} \\
      {\E_{r cf}} \ar[r]^-{j} &
      {\E_{cf} \rlap{ ;}}
    }
  \end{equation*}
  now the previous argument will carry over, \emph{mutatis mutandis},
  so long as $\E_{r c}$ is locally presentable and $j'$ has a right
  adjoint. Since $R$ and $j$ are left adjoint functors between locally
  presentable categories, this follows like before but now appealing
  to Theorem~3.15, rather than Theorem~2.18, of~\cite{Bird1984Limits}.
\end{proof}

Putting these results together, we now obtain:

\secondmaintheorem

Analogously to Section~\ref{sec:relat-enrich-bousf}, one often
constructs colocalisations of a right proper model category from a set
of objects which generate the colocal ones under homotopy colimits. We
now rederive this result in the one-dimensional setting.

\begin{Defn}
  \label{def:8}
  Given a model $\V$-category $\E$, an object $X \in \E$ and a map
  $f \in \E(A,B)$, we write $X \ho f$ if
  $\E_h(X, f) \colon \E_h(X, A) \rightarrow \E_h(X,B)$ is invertible.
  Given a class of objects $K$ in $\E$, we now say that:
  \begin{itemize}
  \item A map $f \in \E$ is a \emph{$K$-colocal equivalence} if $X \ho
    f$ for all $X \in K$;
  \item An object $X \in \E$ is \emph{$K$-colocal} if 
    $X \ho f$ for all $K$-colocal equivalences $f \in \E$.
  \end{itemize}
\end{Defn}

\begin{Thm}
  \label{thm:6}
  Let $\E$ be combinatorial, right proper and one-dimensional. For any
  set of objects $K$ in $\E$, there exists a combinatorial
  colocalisation $\E_{r}$ of the model structure
  $\E$ for which:
  \begin{itemize}
  \item The cofibrant objects are the $K$-colocal $\E$-cofibrant objects;
  \item The weak equivalences are the $K$-colocal equivalences.
  \end{itemize}
  Moreover, every combinatorial colocalisation of $\E$
  arises thus.
\end{Thm}
\begin{proof}
  Given a set $K$ of objects, let $\E_{rcf} \subseteq \E_{cf}$ be the
  full subcategory of $K$-colocal fibrant--cofibrant objects.
  Taking $K' = \{QRX : X \in
  K\}$ and taking
  \begin{equation}\label{eq:2}
    I = J = \{ 0 \rightarrow Y : Y \in K'\} \cup 
    \{\nabla \colon Y + Y \rightarrow Y \mid Y \in K'\}\rlap{ ,}
  \end{equation}
  we obtain by the small object argument a combinatorial
  one-dimensional model structure on $\E_{cf}$ with acyclic fibrations
  the $K$-colocal equivalences in $\E_{cf}$, and so, by the dual of
  Remark~\ref{rk:4}, with cofibrant objects the $K$-colocal objects in
  $\E_{cf}$. Thus, by Proposition~\ref{prop:22}, $\E_{rcf}$ is
  coreflective in $\E_{cf}$ and locally presentable, and so applying
  Theorem~\ref{thm:4} to $\E_{rcf}$ yields a colocalisation $\E_r$ of
  $\E$ with $(\E_r)_{cf} = \E_{r cf}$.

  The same argument as previously shows that the $\E_r$-cofibrant
  objects are the $K$-colocal $\E$-cofibrant ones. Moreover, the
  $\E_r$-weak equivalences in $\E_{cf}$ are the maps inverted by the
  coreflector into $\E_{r cf}$, which are those $f$ such that
  $\E_{cf}(X, f) \cong \E_h(X,f)$ is invertible for all
  $X \in \E_{rcf}$. By the dual of Remark~\ref{rk:4}, these are
  exactly the $K$-colocal equivalences in $\E_{cf}$; so arguing as
  before, the $\E_r$-weak equivalences are the $K$-colocal
  equivalences.
  
  Finally, if $\E_r$ is any colocalisation of $\E$, then by
  Theorem~\ref{thm:4}, $(\E_r)_{cf}$ is locally presentable and
  coreflective in $\E_{cf}$. Since $(\E_r)_{cf}$ is locally
  presentable, it has a small full subcategory $\A$ whose
  colimit-closure in $(\E_r)_{cf}$ is the whole category; thus, since
  $(\E_r)_{cf}$ is closed in $\E_{cf}$ under colimits, the
  colimit-closure of $\A$ in $\E_{cf}$ is $(\E_r)_{cf}$. Now let
  $K = \ob \A$. The $K$-colocal objects in $\E_{cf}$ comprise a
  coreflective subcategory, which is colimit-closed, and so includes
  every object in $(\E_r)_{cf}$. On the other hand, each $K$-colocal
  object is a retract of an $I$-cell complex with $I$ as
  in~\eqref{eq:2}, so constructible from objects in $\A$ via colimits,
  and so in $(\E_r)_{cf}$. So the subcategory of $K$-colocal objects
  in $\E_{cf}$ is precisely $(\E_r)_{cf}$. As a model structure is
  determined by its fibrations and cofibrant--fibrant objects, $\E_r$
  is thus the colocalisation of $\E$ with respect to~$K$.
\end{proof}

\section{Examples}
\label{sec:examples}

We conclude this paper by describing some examples of one-dimensional
model categories obtained via Bousfield (co)localisation. While the
one-dimensionality means that there is no real homotopy theory,
we can at least find examples in which the fibrant, cofibrant or
fibrant--cofibrant objects are mathematically interesting.

As a first step, we may apply Theorem~\ref{thm:3} to see that
combinatorial localisations of the \emph{discrete} model structure on
a locally presentable category $\E$ correspond bijectively with full,
replete, reflective, locally presentable subcategories of $\E$; this
recovers Theorem~4.3 of~\cite{Salch2017The-Bousfield}\footnote{Or
  rather, its restriction to the combinatorial case; when starting
  from a discrete model structure, it is possible to construct
  (co)localisations under rather weaker assumptions than
  combinatoriality.}. The localised model structure corresponding to
the subcategory $\B$ is obtained by lifting the discrete model
structure on $\B$ injectively along the reflector
$R \colon \E \rightarrow \B$. This model structure is always left
proper, since every object is cofibrant, but with an eye towards
subsequent \emph{co}localisation, it will be useful to know when it is
also \emph{right} proper.

\begin{Defn}
  \label{def:9}
  A reflection $V \colon \B \leftrightarrows \E \colon F$ is called
  \emph{semi-left-exact} if the reflector $F \colon \E \rightarrow \B$
  preserves pullbacks along maps in the essential image of $V$.
\end{Defn}
This definition originates in Section 4
of~\cite{Cassidy1985Reflective}; the following result, describing the
relation with right proper model structures, was first observed
in~\cite{Rosicky2007Factorization}.

\begin{Lemma}
  \label{lem:1}
  A localisation of the discrete model structure on the locally
  presentable $\E$ is right proper if and only if the reflection
  $i \colon \E_{\ell f} \leftrightarrows \E \colon R_\ell$ is
  semi-left-exact.
\end{Lemma}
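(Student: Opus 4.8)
The plan is to characterise right properness of $\E_\ell$ in terms of the restricted model structure on $(\E_\ell)_f = \E_{\ell f}$, and then to translate that into a statement about the reflector. Recall that, since we start from the discrete model structure on $\E$, every object is cofibrant, so $\E_c = \E$ and $\E_{cf} = \E_f$; moreover the $\E_\ell$-fibrant objects are exactly the local ones, and the $\E_\ell$-weak equivalences between such objects are the isomorphisms (by Proposition~\ref{prop:5}(v), applied in $\E_\ell$, together with the fact that every object is $\E_\ell$-cofibrant). So in $\E_{\ell f}$ the restricted $\E_\ell$-model structure is \emph{discrete}. A dual of Lemma~\ref{lem:8}---i.e.\ the statement that a one-dimensional model category is right proper iff its restriction to the subcategory of cofibrant objects is right proper---reduces the question to whether $\E_\ell$, restricted to its cofibrant objects, is right proper. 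But every object of $\E$ is $\E_\ell$-cofibrant, so that restriction is just $\E_\ell$ itself, and this reduction step says nothing; instead I would restrict along the \emph{fibrant} objects in a suitable sense. The cleaner route: by the dual of Lemma~\ref{lem:8}, left properness of $\E$ is detected on $\E_c$; there is no such free reduction for right properness, so I work directly.

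First I would unwind what right properness of $\E_\ell$ asks. A pullback
\begin{equation*}
  \cd{
    {P} \ar[r] \ar[d] & {X} \ar[d]^{g} \\
    {A} \ar[r]^-{w} & {B}
  }
\end{equation*}
with $g$ an $\E_\ell$-fibration (equivalently an $\E$-fibration, i.e.\ any map since the discrete structure has all maps fibrant---wait, in $\E_\ell$ the fibrations are a \emph{subclass}) and $w$ an $\E_\ell$-weak equivalence must have its top edge an $\E_\ell$-weak equivalence. Since $\E_\ell$-weak equivalences are the maps inverted by the fibrant--cofibrant replacement $R_\ell$ (Remark~\ref{rk:3}, using $Q = \id$ here), and since $R_\ell$ is a reflector onto $\E_{\ell f}$, the condition becomes: $R_\ell$ sends the pullback square above to a square whose top edge is invertible. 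Using that $w$ is inverted by $R_\ell$, $R_\ell w$ is an iso, so we need $R_\ell$ applied to the pullback of $g$ along $w$ to agree with $R_\ell X \to R_\ell B \xrightarrow{\sim} R_\ell A$, i.e.\ $R_\ell$ must preserve this pullback up to the canonical comparison being an iso on the relevant edge. Because $\E_\ell$-fibrations between fibrant objects can be assumed (after replacing $X$, $B$ by $R_\ell X$, $R_\ell B$ and using that $g$ is a fibration so $R_\ell$ acts nicely) to be morphisms of $\E_{\ell f}$, and $w$ becomes an isomorphism there, the content is exactly that $R_\ell$ preserves pullbacks of maps $g\colon X \to B$ \emph{with $B$ local}---equivalently pullbacks along maps into objects in the essential image of $i\colon \E_{\ell f} \hookrightarrow \E$. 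That is precisely semi-left-exactness of the reflection in the sense of Definition~\ref{def:9}.

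So the two directions are: (i) if the reflection is semi-left-exact, run the above computation forwards---given an $\E_\ell$-fibration $g$ and an $\E_\ell$-weak equivalence $w$, form the pullback in $\E$, apply $R_\ell$; semi-left-exactness plus $R_\ell w$ invertible gives the top map inverted by $R_\ell$, hence an $\E_\ell$-weak equivalence. Here one must check that an $\E_\ell$-fibration, or at least enough of them to test right properness, factors as something to which semi-left-exactness applies---this is where I'd use that it suffices to check right properness on \emph{fibrant} targets and that every $\E$-object maps to a local object via $\upsilon$, an $\E_\ell$-acyclic cofibration, so pullbacks along $w$ can be compared with pullbacks along $R_\ell w$. (ii) Conversely, if $\E_\ell$ is right proper, specialise right properness to pullbacks of a map $g\colon X \to B$ with $B = iB_0$ local along a weak equivalence $w$ hitting $B$, and read off that $R_\ell$ preserves that pullback, yielding semi-left-exactness; more directly, one recovers the defining condition of Definition~\ref{def:9} by taking $w$ to be a reflection unit $\upsilon_A\colon A \to R_\ell A$, which is an $\E_\ell$-weak equivalence, and noting that pulling back a map into a local object along $\upsilon_A$ and then reflecting must, by right properness, yield the reflection of the original pullback.

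The main obstacle I anticipate is the bookkeeping in direction (i): matching the class of $\E_\ell$-fibrations (which, by injective lifting, we have no explicit hold on) against the class of maps for which semi-left-exactness directly supplies a preserved pullback. The fix is to observe that one need only verify right properness for pullbacks with \emph{fibrant} base and apex---equivalently, it suffices to check that $R_\ell$ of the pullback square is again a pullback---and then every $\E_\ell$-fibration between $\E_\ell$-fibrant objects is a morphism of $\E_{\ell f}$, on which $R_\ell$ is the identity, reducing everything to pullback-preservation of $R_\ell\colon \E \to \E_{\ell f}$ along maps landing in $\E_{\ell f}$, which is the definition of semi-left-exactness.
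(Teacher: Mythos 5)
Your opening moves are right and agree with the paper's: since every map of $\E$ is a cofibration, the $\E_\ell$-acyclic fibrations are the isomorphisms, hence $\W_{\E_\ell} = \T\C_{\E_\ell} = $ the maps inverted by $R_\ell$, and right properness becomes the statement that the maps inverted by $R_\ell$ are stable under pullback along $\E_\ell$-fibrations. The difficulty is entirely in relating that statement to semi-left-exactness, and here your argument has a genuine gap in the direction ``semi-left-exact $\Rightarrow$ right proper''. Your proposed fix is to ``verify right properness only for pullbacks with fibrant base and apex'', so that the fibration being pulled back along lies in $\E_{\ell f}$ and Definition~\ref{def:9} applies directly. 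But this reduction is not justified, and in its most natural reading it is false: in \emph{any} model category the pullback of a weak equivalence between fibrant objects along a fibration is a weak equivalence, so a criterion that only inspects squares with fibrant data would make every model category right proper. (Indeed here a weak equivalence between local objects is an acyclic cofibration admitting a retraction, hence an isomorphism, so that case is vacuous.) The real content is precisely the passage from fibrations between local objects to arbitrary $\E_\ell$-fibrations --- a class you rightly note we have no explicit hold on --- and your sketch supplies no mechanism for it. The standard mechanism is to show, using semi-left-exactness, that every $\F_{\E_\ell}$-map is, up to composition with a map inverted by $R_\ell$, a pullback of a map of $\E_{\ell f}$ along a unit $\upsilon_B$, and then to run a pasting argument; this is exactly what is proved in Theorem~4.3 of Cassidy--H\'ebert--Kelly.

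The paper sidesteps all of this: having observed that $\T\C_{\E_\ell}$ is the class of maps inverted by $R_\ell$ and that $(\T\C_{\E_\ell}, \F_{\E_\ell})$ is therefore the \emph{reflective factorisation system} associated to $\E_{\ell f}$, it simply invokes Theorem~4.3 of \cite{Cassidy1985Reflective}, which states that the left class of that factorisation system is stable under pullback along the right class if and only if the reflection is semi-left-exact. Your converse direction (taking $w = \upsilon_A$ and pulling back along maps of $\E_{\ell f}$, which are $\E_\ell$-fibrations by Proposition~\ref{prop:5}(i)) can be made to work with some care about which formulation of semi-left-exactness one targets, but the forward direction needs either the Cassidy--H\'ebert--Kelly argument or a genuine replacement for it; as written, the proposal asserts the conclusion of that theorem rather than proving it.
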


\begin{proof}
  The acyclic fibrations of the localised model structure are the
  isomorphisms, whence the weak equivalences are the acyclic
  cofibrations; so right properness is the condition that $\T\C$-maps
  are stable under pullback along $\F$-maps. Since the acyclic
  cofibrations are equally the maps inverted by $R_\ell$, its
  $(\T\C, \F)$-factorisation system is, in the terminology
  of~\cite{Cassidy1985Reflective}, the \emph{reflective factorisation
    system} corresponding to the subcategory $\E_{\ell f}$; now Theorem~4.3 of
  \emph{ibid}.~proves that $\T\C$-maps are stable under pullback along
  $\F$-maps just when the reflection is semi-left-exact.
\end{proof}

Putting this together with Theorem~\ref{thm:4}, we get:

\begin{Prop}
  \label{prop:15}
  Let $\A$, $\B$ and $\E$ be locally presentable. For any
  semi-left-exact reflection
  $i \colon \B \leftrightarrows \E \colon R$ and coreflection
  $j \colon \A \leftrightarrows \B \colon Q$ there is a one-dimensional
  model structure on $\E$ with fibrant objects those in the
  essential image of $i$, with cofibrant objects those
  $X \in \E$ such that $RX$ is in the essential image of $j$, and with
  cofibrant--fibrant objects those in the essential image of $ij$.
\end{Prop}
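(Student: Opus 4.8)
The plan is to realise the model structure in two moves: first localise the discrete model structure on $\E$ along the semi-left-exact reflection, and then colocalise the outcome along the coreflection.

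\emph{First move.} As recalled just above, since $\B$ (the essential image of $i$) is a full, replete, reflective, locally presentable subcategory of the discrete model category $\E$, Theorem~\ref{thm:3} yields a combinatorial one-dimensional localisation $\E_\ell$ of the discrete structure, namely the injective lifting along $R\colon\E\to\B$ of the discrete structure on $\B$. Thus every map of $\E_\ell$ is a cofibration, so $(\E_\ell)_c=\E$; the fibrant objects of $\E_\ell$ are the objects of $\B$, so $(\E_\ell)_f=(\E_\ell)_{cf}=\B$; and since each unit $\eta_X\colon X\to RX$ is, functorially in $X$, an acyclic cofibration with fibrant codomain, a fibrant replacement functor for $\E_\ell$ is $R$ itself. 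The one point that needs the hypothesis is right properness: by Lemma~\ref{lem:1}, semi-left-exactness of $i\colon\B\leftrightarrows\E\colon R$ is exactly the statement that $\E_\ell$ is right proper.

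\emph{Second move.} The essential image of $j$ in $\B$ is full and replete, coreflective via $Q$, and locally presentable (being equivalent to $\A$); hence it is a full, replete, coreflective, locally presentable subcategory of $(\E_\ell)_{cf}=\B$. Since $\E_\ell$ is right proper, one-dimensional and combinatorial, Theorem~\ref{thm:4} produces a combinatorial colocalisation $\E_r$ of $\E_\ell$ with $(\E_r)_{cf}$ equal to this subcategory, that is, the essential image of $ij$. A colocalisation has the same fibrations as $\E_\ell$ and hence the same fibrant objects, so $(\E_r)_f=(\E_\ell)_f$ is the essential image of $i$. For the cofibrant objects, I would invoke the recipe from the proof of Proposition~\ref{prop:27} (the dual of Lemma~\ref{lem:10}(i)): the colocality $(\E_r)_c$ is recovered from $(\E_r)_{cf}$ by pulling back along the fibrant replacement functor of $\E_\ell$, so that $X\in(\E_\ell)_c$ is $\E_r$-cofibrant precisely when $RX\in(\E_r)_{cf}$. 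As $(\E_\ell)_c=\E$ and $(\E_r)_{cf}$ is the essential image of $ij$ (equivalently, inside $\B$, of $j$), this says exactly that $X\in\E$ is $\E_r$-cofibrant iff $RX$ lies in the essential image of $j$. Finally $(\E_r)_{cf}=(\E_r)_c\cap(\E_r)_f$ recovers the essential image of $ij$, and $\E_r$ is the sought model structure.

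The hard part is purely bookkeeping across the join of the two moves. One must confirm that $\E_\ell$ meets the hypotheses of Theorem~\ref{thm:4}: combinatoriality and one-dimensionality come for free with a combinatorial localisation, but right properness is exactly where semi-left-exactness is used (Lemma~\ref{lem:1}). And one must pin down the fibrant replacement functor of $\E_\ell$ as the reflector $R$, since it is this that turns the abstract pullback description of $(\E_r)_c$ into the explicit condition ``$RX$ is in the essential image of $j$''.
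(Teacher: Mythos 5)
Your proof is correct and follows exactly the route the paper intends: the paper derives Proposition~\ref{prop:15} by ``putting together'' Theorem~\ref{thm:3} (applied to the discrete model structure, localising at the reflective $\B$) with Lemma~\ref{lem:1} (semi-left-exactness gives right properness) and then Theorem~\ref{thm:4} (colocalising at the coreflective image of $j$), precisely as you do. Your identification of the fibrant replacement of $\E_\ell$ with $R$ and the resulting description of the $\E_r$-cofibrant objects via the pullback in Proposition~\ref{prop:27} correctly fills in the bookkeeping the paper leaves implicit.
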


Dually, we can construct a model structure on $\E$ from a
semi-\emph{right}-exact \emph{co}reflection
$j \colon \B \leftrightarrows \E \colon Q$ together with a reflection
$i \colon \A \leftrightarrows \B \colon R$ by first colocalising and
then localising.

With these results in hand, we are now ready to give some examples. It
is readily checked that all of the categories we deal with are locally
presentable, and so we will make no mention of this in what follows.

\begin{Ex}
  \label{ex:2}
  Let $A$ be a commutative ring, and let $\cat{Zar}(A)$ denote the big
  Zariski topos of $A$. That is, $\cat{Zar}(A)$ the category of
  sheaves on the dual of the category
  $\smash{\cat{Alg}_{A}^\mathrm{fp}}$ of finitely presentable
  $A$-algebras, with the topology defined by surjective families of
  Zariski open inclusions. Sheafification gives a
  (semi-)left-exact reflection
  \begin{equation}\label{eq:10}
    \cd{
      {\cat{Zar}(A)} \ar@{ (->}@<-4.5pt>[r]_-{} \ar@{}[r]|-{\bot} &
      {[\smash{\cat{Alg}_{A}^\mathrm{fp}}, \cat{Set}]}\rlap{ .} \ar@<-4.5pt>[l]_-{}
    }
  \end{equation}

  Now let $\cat{zar}(A)$ denote the small Zariski topos of $A$: the
  category of sheaves on the dual of the subcategory
  $\cat{Loc}_A\subseteq \smash{\cat{Alg}_{A}^\mathrm{fp}}$ on the
  basic Zariski opens of $A$ (i.e., the localisations of $A$ at a
  single element) under the restricted Zariski topology. The inclusion
  $j \colon \cat{Loc}_A \rightarrow \smash{\cat{Alg}_{A}^\mathrm{fp}}$
  is fully faithful, left exact, and preserves and reflects covers; so
  by~\cite[Example~C2.3.23]{Johnstone2002Sketches2} there is a
  coreflection
  \begin{equation}
    \label{eq:11}
    \cd{
      {\cat{zar}(A)} \ar@{ (->}@<-4.5pt>[r]_-{} \ar@{}[r]|-{\top} &
      {\cat{Zar}(A)} \ar@<-4.5pt>[l]_-{}
    }
  \end{equation}
  with right adjoint given by restriction along $j$ and left adjoint
  by left Kan extension followed by sheafification. More concretely,
  the left adjoint sends $X \in \cat{zar}(A)$ to the functor of points
  of the $A$-scheme $p \colon \Lambda_X \rightarrow \mathrm{Spec}\,A$
  obtained by glueing Zariski open subschemes of $\mathrm{Spec}\,A$ in
  the manner specified by $X$. As such, we can see the image of
  this left adjoint as comprising the ``local homeomorphisms'' over
  $\mathrm{Spec}\,A$.

  Applying Proposition~\ref{prop:15} to~\eqref{eq:10}
  and~\eqref{eq:11}, we thus have a model structure on
  $[\smash{\cat{Alg}_{A}^\mathrm{fp}}, \cat{Set}]$ with fibrant
  objects the big Zariski sheaves and with as cofibrant--fibrant
  objects, the small Zariski sheaves seen as local homeomorphisms over
  $\mathrm{Spec}\, A$. The general fibrant objects are those functors
  of points $\smash{\cat{Alg}_{A}^\mathrm{fp}} \rightarrow \cat{Set}$
  whose sheafification lands in $\cat{zar}(A) \subseteq \cat{Zar}(A)$.
\end{Ex}

\begin{Ex}
  \label{ex:7}
  Let $k$ be an algebraically closed field, and let
  $\cat{LocArt}_k \subseteq \cat{Alg}_{A}^\mathrm{fp}$ denote the full
  subcategory on the local Artinian $k$-algebras. The topology on the
  dual of $\cat{LocArt}_k$ induced from the Zariski topology is easily
  seen to be \emph{discrete}, so that the category of sheaves thereon
  is equally the category of presheaves; now, as in the preceding
  example, we induce a coreflection
  \begin{equation}
    \label{eq:14}
    \cd{
      {[\cat{LocArt}_k, \cat{Set}]} \ar@{ (->}@<-4.5pt>[r]_-{} \ar@{}[r]|-{\top} &
      {\cat{Zar}(k)} \ar@<-4.5pt>[l]_-{}
    }
  \end{equation}
  whose right adjoint has a further right adjoint given by right Kan
  extension along the inclusion
  $\cat{LocArt}_k \subseteq \cat{Alg}_{A}^\mathrm{fp}$. It follows
  that this coreflection is semi-right-exact.

  The linear duals of local Artinian $k$-algebras are the
  cocommutative $k$-coalgebras which are finite-dimensional and
  \emph{irreducible}: that is, contain a unique grouplike element.
  By~\cite[Corollary~8.0.7]{Sweedler1969Hopf}, any cocommutative
  $k$-coalgebra is the direct sum of irreducible ones, and
  by~\cite[Theorem~2.2.1]{Sweedler1969Hopf}, any irreducible
  $k$-coalgebra is the union of its (irreducible) finite-dimensional
  subcoalgebras. It follows that the linear duals of local
  Artinian $k$-algebras are dense in the (cocomplete) category
  $k\text-\cat{Cocomm}$ of cocommutative coalgebras, and so we have a
  reflection
  \begin{equation}
    \label{eq:15}
    \cd{
      {k\text-\cat{Cocomm}} \ar@{ (->}@<-4.5pt>[r]_-{} \ar@{}[r]|-{\bot} &
      {[\cat{LocArt}_k, \cat{Set}]\rlap{ .}} \ar@<-4.5pt>[l]_-{}
    }
  \end{equation}
  Applying the dual of Proposition~\ref{prop:15} to~\eqref{eq:14}
  and~\eqref{eq:15}, we thus have a model structure on the big Zariski
  topos of $k$ whose cofibrant objects are the colimits in
  $\cat{Zar}(k)$ of the spectra of local Artinian $k$-algebras, and
  whose cofibrant--fibrant objects are cocommutative $k$-algebras; the
  inclusion into $\cat{Zar}(k)$ identifies these with the filtered
  colimits of the spectra of Artinian $k$-algebras. The general
  fibrant objects are Zariski sheaves $X$ satisfying a form of
  ``infinitesimal linearity''~\cite{Kock1981Synthetic} which is
  satisfied, for example, by any scheme over $\cat{Spec}(k)$. Among
  other things, this infinitesimal linearity ensures the set of
  tangent vectors $T_e(X)$ to a $k$-valued point
  $e \colon \cat{Spec}(k) \rightarrow X$---that is, the set of
  extensions of $e$ through the map
  $\cat{Spec}(k) \rightarrow
  \cat{Spec}(k[\varepsilon]/\varepsilon^2)$---has the structure of a
  $k$-vector space, which is moreover a Lie algebra if $e$ is the
  neutral element for a group structure on $X$.
\end{Ex}
\begin{Ex}
  \label{ex:3}
  Let $X$ be a connected, locally connected and semi-locally simply
  connected topological space. As for any space, we have the left
  exact reflection
  \begin{equation*}
    \cd{
      {\cat{Sh}(X)} \ar@{ (->}@<-4.5pt>[r]_-{} \ar@{}[r]|-{\bot} &
      {[\O(X)^\mathrm{op}, \cat{Set}]} \ar@<-4.5pt>[l]_-{}
    }
  \end{equation*}
  of presheaves into sheaves. Now let $U$ be a universal covering
  space for $X$, seen as an object in $\cat{Sh}(X)$, let
  $\pi_1(X) = \cat{Sh}(X)(U,U)$ be the fundamental group, and let
  $j \colon \pi_1(X) \rightarrow \cat{Sh}(X)$ be the inclusion of the
  full subcategory on $U$. By standard properties of covering spaces,
  the cocontinuous extension
  $j_! \colon \pi_1(X)\text-\cat{Set} \rightarrow \cat{Sh}(X)$ of $j$
  is fully faithful and has as essential image the covering spaces
  over $X$. In particular, we have a coreflection
  \begin{equation*}
    \cd{
      {\pi_1(X)\text-\cat{Set}} \ar@{ (->}@<-4.5pt>[r]_-{} \ar@{}[r]|-{\top} &
      {\cat{Sh}(X)} \ar@<-4.5pt>[l]_-{}
    }
  \end{equation*}
  with right adjoint sending a sheaf $S$ to the set $\cat{Sh}(X)(U,S)$
  with $\pi_1(X)$-action induced from $U$. So applying
  Proposition~\ref{prop:15}, we have a model structure on
  $[\O(X)^\mathrm{op}, \cat{Set}]$ whose fibrant objects are sheaves
  on $X$, and whose cofibrant--fibrant objects are $\pi_1(X)$-sets,
  identified with the corresponding covering spaces. General
  cofibrant objects are presheaves whose sheaf of local sections
  is a covering space.
\end{Ex}

\begin{Ex}
  \label{ex:6}
  The preceding example arose by colocalising the model structure for
  sheaves on $[\O(X)^\mathrm{op}, \cat{Set}]$ at the single object $U$
  given by the universal covering space; however, if $X$ is not
  locally semi-locally simply connected, then $U$ need not exist.
  However, we can instead take the colocalisation at the set $K$ of
  all (isomorphism-class representatives) of \emph{finite} covering
  spaces; we then obtain a model structure on
  $[\O(X)^\mathrm{op}, \cat{Set}]$ with sheaves as fibrant objects,
  and cofibrant--fibrant objects the continuous $G$-sets for $G$ the
  profinite completion of $\pi_1(X)$.
\end{Ex}

\begin{Ex}
  \label{ex:4}
  Generalising Example~\ref{ex:3} in a different direction, we can
  construct a model structure on the category
  $[\O(X)^\mathrm{op}, \cat{Vect}_k]$ of presheaves of $k$-vector
  spaces on the connected, locally connected and semi-locally simply
  connected $X$ whose fibrant objects are the sheaves of $k$-vector
  spaces and whose category of cofibrant--fibrant objects is the
  category of $k$-linear representations of $\pi_1(X)$, with these
  being identified in $[\O(X)^\mathrm{op}, \cat{Vect}_k]$ with the
  corresponding local systems.
\end{Ex}

\begin{Ex}
  \label{ex:5}
  Let $X$ be a quasi-compact quasi-separated scheme, and let
  $\cat{Psh}(\O_X)$ and $\cat{Sh}(\O_X)$ be the categories of
  presheaves of $\O_X$-modules and sheaves of $\O_X$-modules. The left
  exact reflection between sheaves and presheaves induces a left exact
  reflection $\cat{Sh}(\O_X) \leftrightarrows \cat{Psh}(\O_X)$.
  Furthermore, the subcategory
  $\cat{QCoh}(\O_X) \subseteq \cat{Sh}(\O_X)$ of \emph{quasicoherent}
  sheaves of $\O_X$-modules is coreflective
  by~\cite[Lemma~II.3.2]{Berthelot1971Theorie}. We thus have a model
  structure on the category of presheaves of $\O_X$-modules whose
  fibrant objects are the sheaves of $\O_X$-modules, and whose
  cofibrant--fibrant objects are the quasicoherent sheaves.
\end{Ex}

\begin{Ex}
  \label{ex:9}
  Recall that, if $G$ is a topological group, then a \emph{continuous}
  $G$-set is a set $X$ endowed with an action $G \times X \rightarrow
  X$ which is continuous for the discrete topology on $X$; this is
  equally the condition that the stabiliser of each $x \in X$ is an
  open subgroup of $G$. It follows easily that there is a coreflection
  \begin{equation*}
    \cd{
      {\cat{Cts}\text-G\text-\cat{Set}} \ar@{ (->}@<-4.5pt>[r]_-{} \ar@{}[r]|-{\top} &
      {G\text-\cat{Set}} \ar@<-4.5pt>[l]_-{}
    }
  \end{equation*}
  between $G$-sets and continuous $G$-sets, where the right adjoint
  $c$ sends a $G$-set $X$ to the sub-$G$-set
  $cX = \{x \in X : \mathrm{Stab}_x \text{ is open in $G$}\}$. The
  counit map is, of course, simply the inclusion, and it follows
  easily from this description that the coreflector preserves pushouts
  along maps between continuous $G$-sets; so this adjunction is
  semi-right-exact.

  Now let $N$ be an open normal subgroup of $G$. The category of
  continuous $G/N$-sets can be identified with the full subcategory of
  continuous $G$-sets in which each element is stabilised by (at
  least) $N$, and in fact we have a reflection
  \begin{equation*}
    \cd{
      {\cat{Cts}\text-G/N\text-\cat{Set}} \ar@{ (->}@<-4.5pt>[r]_-{} \ar@{}[r]|-{\bot} &
      {\cat{Cts}\text-G\text-\cat{Set}} \ar@<-4.5pt>[l]_-{}
    }
  \end{equation*}
  where the left adjoint quotients out a continuous $G$-set by the
  equivalence relation $x \sim x'$ iff $Nx = Nx'$. We thus have a
  model structure on $G\text-\cat{Set}$ whose cofibrant objects are
  the continuous $G$-sets and whose cofibrant--fibrant objects are the
  continuous $G/N$-sets. The general fibrant objects are those
  $G$-sets in which every element with an open stabiliser is
  stabilised by at least $N$.
\end{Ex}

\begin{Ex}
  \label{ex:11}
  Let $\Delta_3$ denote the full subcategory of $\Delta$ on $[0]$,
  \dots, $[3]$, and let
  $\cat{sSet}_3 = [\Delta_3^\mathrm{op}, \cat{Set}]$. Left Kan
  extension, restriction and right Kan extension along the inclusion
  $\Delta_3 \subseteq \Delta$ gives a chain of adjoints
  $\mathrm{sk}_3 \dashv \mathrm{tr}_3 \dashv \mathrm{cosk}_3 \colon
  \cat{sSet}_3 \rightarrow \cat{sSet}$ with both $\mathrm{sk}_3$ and
  $\mathrm{cosk}_3$ fully faithful. In particular,
  $\mathrm{sk}_3 \colon \cat{sSet}_3 \leftrightarrows \cat{sSet}
  \colon \mathrm{tr}_3$ is a semi-right-exact coreflection. Now, as
  the data and axioms for a category only involve at most three
  composable arrows, the \emph{truncated} nerve
  $\mathrm{tr}_3 N \colon \cat{Cat} \rightarrow \cat{sSet} \rightarrow
  \cat{sSet}_3$ is still fully faithful, and has a left adjoint $L$
  since $N$ and $\mathrm{tr}_3$ do. So we also have a reflection
  $\mathrm{tr}_3 N \colon \cat{Cat} \leftrightarrows \cat{sSet}_3
  \colon L$.
  
  So by the dual of Proposition~\ref{prop:15}, we have a model
  structure on $\cat{sSet}$ whose cofibrant objects are the
  $3$-truncated simplicial sets, and whose subcategory of
  fibrant--cofibrant objects is equivalent to $\cat{Cat}$. However,
  this equivalence does \emph{not} identify a category in the usual way
  with its nerve, but rather with the $3$-skeleton of its nerve.
  Indeed, the cofibrant--fibrant objects are simplicial sets $X$ which
  are $3$-truncated and satisfy the restricted Segal condition that
  the spine projections
  \begin{equation}\label{eq:8}
    X_2 \rightarrow X_1 \times_{X_0} X_1 \qquad  \text{and} \qquad 
    X_3 \rightarrow X_1 \times_{X_0} X_1
    \times_{X_0} X_1
  \end{equation}
  are isomorphisms: in other words, the $3$-skeleta of nerves of
  categories. More generally, the fibrant objects of this model
  structure are simplicial sets $X$, not necessarily $3$-truncated,
  for which the Segal maps in~\eqref{eq:8} are invertible.
\end{Ex}

\begin{Ex}
  \label{ex:8}
  Let $\E$ denote the category of small, strictly symmetric, strictly
  monoidal categories enriched over abelian groups. There is a full
  embedding of the category of commutative monoids into $\E$ as
  \emph{discrete} categories, and this has a right adjoint given by
  taking the set of objects. This right adjoint is clearly
  cocontinuous, and so we have a semi-right-exact coreflection
  \begin{equation*}
    \cd{
      {\cat{CMon}} \ar@{ (->}@<-4.5pt>[r]_-{} \ar@{}[r]|-{\top} &
      {\E} \ar@<-4.5pt>[l]_-{}
    }
  \end{equation*}
  On the other hand, we have the well-known construction of the
  Grothendieck group of a commutative monoid, giving a reflection
  \begin{equation*}
    \cd{
      {\cat{Ab}} \ar@{ (->}@<-4.5pt>[r]_-{} \ar@{}[r]|-{\bot} &
      {\cat{CMon}\rlap{ .}} \ar@<-4.5pt>[l]_-{}
    }
  \end{equation*}
  We therefore have a model structure on $\E$ whose cofibrant objects
  are commutative monoids and whose cofibrant--fibrant objects are
  abelian groups. The fibrant objects are the small, strictly
  symmetric, strictly monoidal $\cat{Ab}$-categories
  $(\C, \otimes, I)$ in which every object is \emph{strictly}
  invertible for the tensor product $\otimes$. Such categories $\C$
  with abelian group of objects $M$ can be identified\footnote{The
    second author
    learnt of this correspondence from James Dolan.} with $M$-graded
  commutative rings $C$, via the correspondence
  \begin{equation*}
    \C(x,y) \qquad \leftrightarrow \qquad C_{y \otimes x^{-1}}\rlap{ .}
  \end{equation*}
\end{Ex}

\begin{Ex}
  \label{ex:10}
  Let $\T$ be any finitary algebraic theory, such as the theory of
  monoids, or groups, or rings, or $k$-vector spaces, and so on. In
  each case, there is a category with finite products
  $\mathbb{T}$---the \emph{Lawvere
    theory}~\cite{Lawvere1963Functorial} associated to $\T$---whose
  objects are the natural numbers, and for which
  finite-product-preserving functors $\mathbb{T} \rightarrow \E$ into
  any category with finite products are equivalent to $\T$-models in $\E$.

  Now consider any one of the semi-right-exact coreflections
  $i \colon \A \leftrightarrows \E \colon Q$ from the above examples
  (i.e., from Examples~\ref{ex:7}, \ref{ex:9}, \ref{ex:11} or
  \ref{ex:8}). Postcomposition with $Q$ and $i$ induces a
  semi-right-exact coreflection on functor categories
  \begin{equation*}
    \cd{
      {\A^\mathbb{T}} \ar@{ (->}@<-4.5pt>[r]_-{i^\mathbb{T}} \ar@{}[r]|-{\top} &
      {\E^\mathbb{T}} \ar@<-4.5pt>[l]_-{Q^\mathbb{T}}
    }
  \end{equation*}
On the other hand, the category $\cat{FP}(\mathbb{T},\A)$ of
finite-product-preserving functors $\mathbb{T} \rightarrow \A$ is 
reflective in $\A^\mathbb{T}$; and so, applying
the dual of Proposition~\ref{prop:15}, we obtain a model structure on
$\E^\mathbb{T}$ whose cofibrant objects are functors $\mathbb{T}
\rightarrow \A$ and whose fibrant--cofibrant objects are
$\T$-models in $\A$. The general fibrant objects are functors
$\mathbb{T} \rightarrow \E$ whose postcomposition with $Q \colon \E
\rightarrow \A$ preserves finite products; these are equally those
functors $\mathbb{T} \rightarrow \E$ which preserve finite products
\emph{up to} a map which is inverted by $Q$.
\end{Ex}

Let us conclude the paper by taking stock of the preceding examples.
As we explained in the introduction, this paper is really a prelude to
further work explaining Bousfield (co)localisation of general model
structures in terms of projective and injective liftings; in which
context, of course, many serious examples already exist. In the
one-dimensional context, there were no non-trivial examples at all,
and so we felt compelled to provide some.

These examples are sufficiently natural that their existence is surely
not without force. However, it is as yet unclear to us what this force
may be. One possibility is that some aspects of model category theory
do not trivialise in this setting, and provide interesting
information. This may be true, for example, for the theory of monoidal
model categories, or the theory of homotopy limits and colimits.

More interestingly, it could be that the existence of a
one-dimensional model structure is alerting us to some as-yet
unconsidered good property of the interaction between a reflective and
a coreflective subcategory. It is suggestive that many of the
\emph{right proper} examples we have found come from topos theory and,
in particular, relate either to the Galois theory of
Grothendieck~\cite{1971Revetements} or to the \emph{petit}
topos--\emph{gros} topos dichotomy explored in, for example,
\cite{Dubuc1986Logical, Johnstone2012Calibrated}. It would be very
interesting to see if these links can be made tighter. To this end,
some natural questions to explore would be: can we give a
topos-theoretic characterisation of all right proper one-dimensional
model structures on a presheaf category $[\C^\mathrm{op}, \cat{Set}]$?
What information do Quillen adjunctions between such model structures
carry?

\end{document}